\documentclass[letterpaper,10pt,oneside,onecolumn,reqno]{amsart}
%leqno = left hand equation numbering
%oneside vs. twoside = how many pages are you looking at at a time?
%onecolumn vs. twocolumn = obvious

%%% Packages
%\usepackage{refcheck}
\usepackage{amsmath, amssymb}
\usepackage[left=1in,top=1in,right=1in,bottom=1in]{geometry}
\usepackage{setspace}
\usepackage{hyperref}
\usepackage{enumerate}
\usepackage{wrapfig}
\usepackage{graphicx}
\usepackage[all]{xy}
\usepackage{color}
\usepackage[usenames,dvipsnames]{xcolor}
\usepackage{textgreek}
\usepackage{units}

\onehalfspace
\setcounter{tocdepth}{1}	% If this equals 1, the table of contents does not include subsections
%\includeonly{}

%\def\singlespaced{\baselineskip=\normalbaselineskip}

%%% Sets

\newcommand{\B}{\mathcal B}
\newcommand{\C}{\mathbb C}

\newcommand{\D}{\mathcal D}
\newcommand{\E}{\mathbb E}
\newcommand{\EE}{\mathcal E}

\newcommand{\I}{\operatorname{I}}

\newcommand{\N}{\mathbb N}
\renewcommand{\P}{\mathbb P}
\newcommand{\PP}{\mathcal P}

\newcommand{\Reals}{\mathbb R}

\renewcommand{\S}{\mathbb S}

\newcommand{\IntEnt}{{\operatorname{IntEnt}}}

\newcommand{\Ent}{{\operatorname{Ent}}}
\newcommand{\proj}{{\operatorname{proj}}}
\newcommand{\Hilb}{{\operatorname{Hilb}}}

\newcommand{\lift}{{\upsilon}}
\newcommand{\resid}{{\zeta}}

\newcommand{\PW}{{\operatorname{PW}}}

\newcommand{\supp}{\operatorname{supp}}

\newcommand{\dash}{\mbox{-}}

\newcommand{\cs}{\mathrm{c}}

				% Euler's constant.
\renewcommand{\d}{\mathrm{d}}				% Differential operator sans space.
		% Differential operator with space.
%\renewcommand{\i}{\mathrm{i}}				% Square root of $-1$.

\newcommand{\oo}{\infty}
\newcommand{\cov}{{\operatorname{cov}}}

\newcommand{\true}{{\operatorname{true}}}

\newcommand{\curry}{\operatorname{curry}}

\newcommand{\p}{\mathrm p}

\renewcommand{\varsigma}{r}

\newcommand{\w}{\xi}

\newcommand{\op}{\operatorname{op}}

%\newcommand{\Pcon}{\hat \P^*}		%{\P^{\operatorname{con}}}
%\newcommand{\PRR}{\hat \P^\perp}	%{\P^{\operatorname{RR}}}

% Environments
\numberwithin{equation}{section}
\theoremstyle{definition}
\newtheorem{thm}{Theorem}[section]
\newtheorem{cor}[thm]{Corollary}
\newtheorem{lem}[thm]{Lemma}

\newtheorem{pro}[thm]{Proposition}

\newtheorem{rem}[thm]{Remark}
\newtheorem{exa}[thm]{Example}

\renewcommand{\bar}[1]{\overline{#1}}
\renewcommand{\tilde}[1]{\widetilde{#1}}
\renewcommand{\hat}[1]{\widehat{#1}}

\renewcommand{\null}{\operatorname{null}}

\newcommand{\var}{\operatorname{var}}

\renewcommand{\op}{{\operatorname{op}}}

\renewcommand{\Re}{\operatorname{Re}}

\newcommand{\f}{\mathrm f}
\newcommand{\OLS}{{{\operatorname{OLS}}}}

\newcommand{\OLSoppara}[1]{\hat \Mech_{\OLS,#1}}
\newcommand{\OLSoptheta}{\OLSoppara{\theta}}
\newcommand{\OLSopnone}{\hat \Mech_\OLS}

\newcommand{\hatpi}{{\mathtt{\hat \pi}}}

\renewcommand{\L}{{\operatorname{L}}}
\newcommand{\R}{{\operatorname{R}}}
\newcommand{\MSE}{\operatorname{MSE}}
\newcommand{\estvar}{\operatorname{estvar}}
\newcommand{\bias}{\operatorname{bias}}

\newcommand{\notom}[1]{{}}

\author{Tom LaGatta and P. Richard Hahn}
\title{A Structural Approach to Coordinate-Free Statistics}
\date{Spring 2014}

\begin{document}

\begin{abstract}
		We consider the question of learning in general topological vector spaces. By exploiting known (or parametrized) covariance structures, our Main Theorem demonstrates that any continuous linear map corresponds to a certain isomorphism of embedded Hilbert spaces. By inverting this isomorphism and extending continuously, we construct a version of the Ordinary Least Squares estimator in absolute generality. Our Gauss-Markov theorem demonstrates that OLS is a ``best linear unbiased estimator'', extending the classical result. We construct a stochastic version of the OLS estimator, which is a continuous disintegration exactly for the class of ``uncorrelated implies independent'' (UII) measures. As a consequence, Gaussian measures always exhibit continuous disintegrations through continuous linear maps, extending a theorem of the first author. Applying this framework to some problems in machine learning, we prove a useful representation theorem for covariance tensors, and show that OLS defines a good kriging predictor for vector-valued arrays on general index spaces. We also construct a support-vector machine classifier in this setting. We hope that our article shines light on some deeper connections between probability theory, statistics and machine learning, and may serve as a point of intersection for these three communities.
		
		%Our hope is to elucidate some of the deep connections between probability and statistics, and demonstrate how to exploit these structures for machine learning.
%		In this article, we highlight the structure theory of probability measures on topological vector spaces (developed by Vakhania et al.). We exploit known (or parametrized) covariance structure to construct the general version of the Ordinary Least Squares estimator for arbitrary continuous linear maps. Our Gauss-Markov theorem ensures that the OLS estimator is optimal among all linear unbiased estimators. We construct a stochastic version of the OLS estimator, and demonstrate that this defines a continuous disintegration for the wide class of ``uncorrelated implies independent'' measures. Consequently, Gaussian measures always exhibit continuous disintegrations through continuous linear maps, extending a previous result of the first author. We also present some work in spatial statistics, namely, the theory of vector-valued arrays on reasonably general index spaces. We prove a representation theorem for covariance tensors, and we use OLS to construct a good kriging predictor. We also construct a support-vector machine classifier in this framework, which should be of interest to a general mathematical audience. Our hope is that this article makes clear some of the deep connections between probability and statistics, with applications to machine learning.
	\end{abstract}

\maketitle

\tableofcontents

%	\textbf{Very rough draft. Please do not distribute.}

	\part{Structure Theory} \label{part1}
	
	\section{Introduction} \label{sect_intro}

		\newcommand{\data}{y}
		\newcommand{\Mech}{\Upsilon}
		\newcommand{\para}{v}
		
		\newcommand{\Data}{Y}
		\newcommand{\Para}{V}

		Suppose that we are provided with randomly sampled data of the type $\data = \Mech(\para)$, and our goal is to estimate the parameter $\para$. We assume throughout that parameters and data are sampled from topological vector spaces $\Para$ and $\Data$, respectively, and that $\Mech : \Para \to \Data$ is a (known) continuous linear map. The spaces $\Para$ and $\Data$ encode the \emph{types} of parameters and data we may work with.\footnote{By making minimal topological assumptions on spaces and mappings throughout, we ensure maximal applicability of our results.} An \emph{estimator} is a (partial) function $\hat \Mech : \Data \to \Para$ that is consistent with all possible data (i.e., $(\Mech \circ \hat \Mech)(y) = y$ for all $y$ in the domain). Throughout, we assume that the covariance structure of the random parameters is known (or hyperparametrized), in order to learn about unknown parameter values using observed data values.\footnote{A hyperparameter parametrizes a probability distribution over parameters. In practice, estimating the covariance structure empirically is a difficult second-order problem, and we do not consider it here. We treat hyperparameters as either fixed or continuously varying.}
		
		In Section \ref{sect_struct}, we describe the structure theory of probability measures on topological vector spaces (largely developed by Vakhania, Tarieladze and Chobanyan \cite{vakhania1975topological,vakhania1978covariance,vakhania1981probability,vakhania1987probability,tarieladze2007disintegration}). The basic structure is encoded in diagram \eqref{dia_basic}, which ensures that we may represent the covariance structure using Hilbert subspaces $U \subseteq V$ and $U_\Mech \subseteq Y$ (i.e., Cameron-Martin spaces). The deep structure is that $U_\Mech$ is isomorphic to a subspace $\hat U_\Mech \subseteq U$. By restricting the domain of the map $\Mech$, our Main Theorem (Theorem \ref{thm_iso}) ensures that the restriction map $\Mech : U \to U_\Mech$ corresponds to orthogonal projection onto $\hat U_D$, and therefore that $\Mech : \hat U_\Mech \to U_\Mech$ is an isomorphism of Hilbert spaces. As a consequence, the inverse map $\Mech^{-1} : U_\Mech \to \hat U_\Mech$ is well-defined and continuous. In Section \ref{sect_OLS}, we exploit this structure to construct the general Ordinary Least Squares estimator $\hat \Mech_\OLS : \Data \to \Para$, simply by defining $\hat \Mech_\OLS := \Mech^{-1}$ on the Hilbert subspace $U_D \subseteq \Data$, and extending continuously to its maximum possible domain.\footnote{Technically, the domain for the OLS estimator is the closed subspace $\bar{U_D} \subseteq Y$. If the mean vector $m \in V$ is known (or hyperparameterized), we require that the OLS estimator map the data mean $m_\Mech := \Mech(m)$ map to full mean $m$. In that case, the domain can be extended to the affine space $m_\Mech + \bar{U_D}$. If $U_D$ is the reproducing kernel Hilbert space (RKHS) for a strictly positive-definite kernel, then $U_D$ is dense in $Y$ (i.e., $\bar{U_D} = Y$).}
		
		Recall Hadamard's definition of a mathematical problem: ``a problem is well-posed if its solution (1) exists, (2) is unique, and (3) depends continuously in the observed data [and hyperparameters]'' \cite[slide 59]{poggio}. In Section \ref{sect_linreg}, we state the problem of linear regression, which is solved by the OLS estimator. Our Gauss-Markov theorem (Theorem \ref{thm_GMT}) ensures that the OLS estimator is the ``best linear unbiased estimator'' (BLUE) in general. Continuity of the OLS estimator $y \mapsto \hat \Mech_\OLS(y)$ is assured by the existence result (Theorem \ref{thm_OLS}), and joint continuity with the hyperparameters is assured under mild metrizability assumptions on the parameter and data spaces (Lemma \ref{lem_ctyshort}).\footnote{Precisely, if $V$ is a Fr\'echet space and $Y$ is a Banach space, then the function $(\theta,y) \mapsto \hat \Mech_{\OLS,\theta}(y)$ is jointly continuous.} This geometric structure is all familiar to experts in coordinate-free statistics \cite{wichura2006coordinate}, though our level of generality is novel. 
		
		In Section \ref{sect_lincond}, we state the problem of linear conditioning, which is solved by finding a continuous disintegration for the law with respect to $\Mech$. We define the ``stochastic OLS estimator'' by adding independent residual noise to the estimated value $\hat \Mech_\OLS(y)$. Our Theorem \ref{thm_UII} demonstrates that the stochastic OLS estimator defines a continuous disintegration if and only if the law of the parameters is an ``uncorrelated implies independent'' (UII) measure. Gaussian measures are the prototypical example of UII measures (and the motivation for the general definition), hence Gaussian measures always admit continuous disintegrations through continuous linear maps (Corollary \ref{cor_gaussian}).\footnote{This extends Theorems 2 \& 3 of \cite{lagatta2013continuous}, where a certain necessary and sufficient condition was found for Gaussian measures to satisfy the continuous-disintegrations property, which is now seen to be always satisfied.}
		
		\begin{rem}[Linear Models] \label{rem_linmod}
			In statistics, one often deals with linear models of the form $y = x\beta + \varepsilon$. There, $\beta$ denotes a vector-valued parameter from some space $B$, the linear operator $x : B \to Y$ encodes all explanatory variables, and $\varepsilon \in Y$ denotes additive noise (assumed to have mean zero, and be uncorrelated with $x$ and $\beta$).\footnote{In finite-dimensional applications, $y$ and $\varepsilon$ are $n$-dimensional column vectors, $\beta$ is a $p$-dimensional column vector, and $x$ is an $n \times p$ matrix, so $Y \cong \S^n$, $B \cong \S^p$ and $X \cong \S^{np}$. Independence of $x$ and $\beta$ is called ``exogeneity''.} There are two ways to formulate this in the functional form $\data = \Mech(\para)$, while preserving linearity of the map $\Mech$.
			
			First, suppose that the explanatory variables $x$ are known, and the goal is to estimate the parameter $\beta$. Construct the unified parameter value $v := (\beta, \varepsilon)$, treat $x$ as fixed, and define $\Mech_x(\beta,\varepsilon) := x\beta + \varepsilon$. Here, the parameter space is $V := B \times Y$ and the map $\Mech_x : B \times Y \to Y$ is continuous and linear. Using Theorem \ref{thm_OLS}, the OLS estimator $\hat \Mech_{\OLS,x} : Y \to B \times Y$ is well-defined.\footnote{Assuming that $\varepsilon$ can take all possible values, the domain of $\hat \Mech_{\OLS,x}$ is the full space $Y$.} When $B$ and $Y$ are both Hilbert spaces (e.g., finite-dimensional), the OLS estimator can be expressed in a closed form. Define the functions $\hat \beta_x : Y \to B$ and $\hat \varepsilon_x : Y \to Y$ by
				\begin{equation}
					\hat \beta_x(y) := \big((x^* x)^{-1} x^* \big)(y) \quad \mathrm{and} \quad \hat \varepsilon_x(y) := y - \big( x (x^* x)^{-1} x^* \big)(y) = \big( \I_Y - x\hat \beta_x \big)(y), \end{equation}
%					\big(x(x^* x)^{-1} x^*\big)(y) = \big(\I - x(x^* x)^{-1} x^*\big)(y) = , \end{equation}
			where $x^* : Y \to B$ denotes the (Hilbert-)adjoint of $x$, and define the OLS estimator by $\hat \Mech_{\OLS,x}(y) := \big( \hat\beta_x(y), \hat \varepsilon_x(y) \big)$.\footnote{The Hilbert-adjoint $x^*$ is defined using the inner products on $Y$ and $B$, via the identity $\langle x^* y', \beta' \rangle_B = \langle y', x \beta' \rangle_Y$ for all $y' \in Y$, $\beta' \in B$. The operator $\I_Y : Y \to Y$ denotes the identity on$Y$.} The operator $\hat \beta_x := (x^* x)^{-1} x^*$ is known as the Moore-Penrose pseudo-inverse of $x$; it is always well-defined and continuous, and may be computed using the singular-value decomposition (SVD) of $x$.\footnote{As a matrix, the adjoint $x^*$ is the conjugate-transpose of $x$. If $x = u\Sigma v^*$ denotes the SVD of $x$, then $\hat \beta_x = v \hat \beta_\Sigma u^*$ \cite{ben2003generalized}.} Under mild conditions on the law of the noise (full rank covariance) and the explanatory variables (no multicolinearity), the law of large numbers implies that this estimator is \emph{consistent}: as $\dim Y \to \oo$ (in a suitable sense), the law of $\hat \beta_x(y)$ converges to a point-mass at the true value. 
			
			Second, suppose that the parameter $\beta$ is known, and the goal is to estimate the explanatory variable $x$. Construct the unified parameter value $v' := (x, \varepsilon)$, treat $\beta$ as fixed, and define $\Mech_\beta(x,\varepsilon) := x\beta + \varepsilon$. Here, the parameter space is $V' := X \times Y$ and the map $\Mech_\beta : X \times Y \to Y$ is continuous and linear. Using Theorem \ref{thm_OLS}, the OLS estimator $\hat \Mech_{\OLS,\beta} : Y \to X \times Y$ is again well-defined. Define the Koopman operator $\kappa_\beta : X \to Y$ by $\kappa_\beta(x) := x \beta$. By expressing the data in the form $y = \kappa_\beta(x) + \varepsilon$, we reduce the problem to the first case. Suppose that $X$ and $Y$ are Hilbert spaces, define the functions $\hat x_\beta : Y \to X$ and $\hat \varepsilon_\beta : Y \to Y$ by
				\begin{equation}
					\hat x_\beta(y) := \big( ( \kappa_\beta^* \kappa_\beta)^{-1} \kappa_\beta^* \big)(y) \quad \mathrm{and} \quad \hat \varepsilon_\beta(y) := y - \big( \kappa_\beta (\kappa_\beta^* \kappa_\beta)^{-1} \kappa_\beta^* \big)(y) = \big( \I_Y - \kappa_\beta \hat x_\beta \big)(y) = y - \hat x_\beta(y) \beta, \end{equation}
			where $\kappa_\beta^* : Y \to X$ denotes the (Hilbert-)adjoint Koopman operator, and define the OLS estimator by $\hat \Mech_{\OLS,\beta}(y) := \big( \hat x_\beta(y), \hat \varepsilon_x(y) \big)$.\footnote{The Hilbert-adjoint $\kappa_\beta^*$ is defined using the inner products on $Y$ and $X$, via the identity $\langle \kappa_\beta^* y', x' \rangle_X = \langle y', \kappa_\beta x' \rangle_Y = \langle y', x'\beta \rangle_X$ for all $y' \in Y$, $x' \in X$. } As before, $\hat x_\beta$ is the Moore-Penrose inverse of $\kappa_\beta$, and may be computed using the SVD. In this case, consistency is more difficult, since we need $\dim B \to \oo$ for the law of large numbers, which weakens the assumption of no-multicolinearity. Even still, the OLS estimator is well-defined and continuous.
			
			Note: if we try to simultaneously estimate $x$ and $\beta$ using a unified-parameter formalism, then we lose linearity of the map $\Mech(x,\beta,\epsilon) := x\beta + \epsilon$, owing to the product $x\beta$.
		\end{rem}
		
		In Part \ref{part3}, we apply the structure theory to some problems in machine learning, by way of spatial statistics (vector-valued arrays on arbitrary index spaces).\footnote{We require minimal topological assumptions on value and index spaces (no metrizability assumptions); in particular, index space should be separable and locally compact Hausdorff (LCH).} In Section \ref{sect_spatialstats}, we state the problem of prediction, which is to predict all values of a random array given only the values on a restricted index set. Our Theorem \ref{thm_covrep} ensures that covariance structures may always be represented using covariance tensors in the vector-valued setting. Using an arbitrary covariance tensor, we construct the OLS estimator for general array mappings (Theorem \ref{thm_covrep}), which yields the kriging predictor as an immediate consequence (Corollary \ref{cor_kriging}).\footnote{Optimality of these estimators is ensured by the Gauss-Markov theorem.} Our work generalizes the current state-of-the-art kriging predictor, which requires Hilbert-valued arrays on finite-dimensional index spaces \cite{menafoglio2013universal}.
		
		In Section \ref{sect_SVM}, we state the problem of classification, which is to partition the index space given sets of labeled data. The solution is provided by the support-vector machines (SVM) algorithm of Cortes and Vapnik \cite{cortes1995support}, which reduces the classification problem to a convex optimization problem in Hilbert space. We present a general version of the SVM classifier for arbitrary index spaces, highlighting the role played by topological compactness (rather than finiteness). %The SVM classifier is the maximum-margin classifier for the corresponding covariance metric \cite[Theorem 5]{hein2005maximal}, which suggests a deeper connection with the OLS estimator than we explore.
		
%		\subsection*{Conclusion}
		
		Our article demonstrates that a structural approach is a valuable way to formulate and solve problems in coordinate-free statistics. Our Main Theorem ensures that the only essential metric structures are those on Cameron-Martin spaces, and these are a derived consequence of underlying covariance structure. Consequently, unnecessary topological assumptions (like bases and metrics) may safely be removed from spaces and mappings, increasing the range of applicability of mathematical results. Our general estimators and classifiers, being constructed in a formal, mathematical language, may readily be converted into functional programs.\footnote{e.g., using a language like Haskell, Clojure or R.} Finally, we hope that tools like estimators and classifiers may prove to be as useful for the mathematics community as they have been for those in statistics and machine learning. \newline	
		
				\textbf{Acknowledgements.}  The authors would like to thank Scott Armstrong, Antonio Auffinger, Ivan Barrientos, Michael Betancourt, Tyler Bryson, Elliot Aguilar, Miguel Carri\'on Alv\'arez, Adam Brandenburger, Emily Chambliss, Brian D'Alessandro, John Dawkins, Serina Diniega, Anthony Di Franco, Levent Do\u{g}u\c{s} Sa\u{g}un, Nate Eldridge, Lindsey K. Gamard, David Glickenstein, Michael Greinecker, Owen Haaga, Adrien Hardy, Joey Hirsh, Aukosh Jagannath, Selin Kalaycioglu, Samantha Kappagoda, Gustavo Lacerda, S. Dan Lovell, Robert Kohn, Arjun Krishnan, Yenming Mark Lai, Elliot Lipnowski, Peter McCullagh, Casey Meakin, Bud Mishra, Mehryar Mohri, David Mordecai, Charles M. Newman, Victor de la Pe\~{n}a, Natesh Pillai, Benjamin Pittman-Polletta, Phillip Protter, Andy Putman, Javier Rodriguez Laguna, Daniel Roy, Silvia N. Santalla, Jonathan Schmidt-Domin\'e, David Spivak, Daniel Stein, John Terilla, Timothy Ter\"av\"ainen, J. Tipan Verella, Joseph Watkins, Janek Wehr, Brad Weir, Joe Wells, Jochen Wengenroth, Lai-Sang Young, Tai Young-Taft and Fran\c cois Ziegler.
		
		T.L.'s research and travel supported in part by NSF PIRE grant OISE-07-30136. %We thank ICMAT and Univ. Carlos III in Madrid for their hospitality, as well as the University of Chicago} for their hospitality.

	\section{Structure Theory of Probability Measures on Topological Vector Spaces} \label{sect_struct}
				
%		\subsection{Model Assumptions}
		
		Let $\Para$ denote a topological vector space, representing the parameter space for our system of interest. i.e., $\Para$ is a vector space (over field of scalars $\S := \Reals$ or $\C$), it is equipped with a topology, and the operations of vector addition and scalar multiplication are both continuous with respect to this topology. We make minimal topological assumptions on $\Para$, namely, that it is complete and Hausdorff and its dual space separates points.\footnote{Completeness means that Cauchy nets (and sequences) converge and Hausdorff means that points may be separated by closed sets.} The dual space $V^*$ consists of continuous linear functionals $f : \Para \to \S$; separating points means that for any $v,v' \in V$, there exists a continuous linear functional $f \in \Para^*$ so that $\Re f[\para] < \Re f[\para']$. The minimality of these assumptions ensures that nearly \emph{any} topological vector space is potentially usable in statistical modeling.\footnote{For a long list of non-trivial topological vector spaces, see \cite{khaleelulla1982counterexamples}.} 
		
		For example, if the system is described by $N$ scalar parameters, then $\Para = \S^N \cong C(\N, \S)$. If the system is described by continuous (scalar-valued) time series, then $\Para = C(\Reals, \S)$. If the system is described by a vector-valued time series, then $\Para = C(\Reals, V_0)$, where $V_0$ denotes some other topological vector space of values.\footnote{Discontinuous time series may be studied using Skorokhod space $\Para = D(\Reals, V_0)$ \cite{whitt2002stochastic}. In that case, $V_0$ should be metrizable (e.g., Fr\'echet).} If the system is described by a vector-valued array (indexed by some space $I$), then $\Para = C(I, V_0)$. All these examples are ``array spaces'', which will be discussed more in Part \ref{part3}. A linear model is described by $V = X \times Y$ or $V' = B \times Y$ (as in Remark \ref{rem_linmod}); general linear models permit the spaces $B$, $X$ and $Y$ to be array spaces.

		\subsection{Random Parameters} \label{sect_structspaces}
		
		We suppose that the parameter vector for the system is random, and its law is to be described by some (Radon) probability measure on $V$.\footnote{We recall the basic definition. Let $\B(V)$ denote the Borel $\sigma$-algebra of $V$. A (Borel) probability measure is a countably-additive function $P : \B(V) \to [0,1]$ satisfying the constraints $P(\varnothing) = 0$, $P(V) = 1$ and $P(A \cup B) = P(A) + P(B) - P(A \cap B)$ for any $A,B \in \B(X)$. A Radon probability measure is also inner regular, meaning that probabilities of open sets may be approximated by probabilities of upward converging compact sets. i.e., $P(A) = \lim_\gamma P(K^\gamma)$ when $K^\gamma \uparrow A$. For an introduction to measure theory, see Folland \cite{folland1999real} or Bogachev \cite{bogachev2007measure}.} We ignore the issue of whether there exists a single ``true'' law $\P_\true$ for the system, and consider only statistical models: (hyper-)parameterized families of probability measures $\{ \P_\theta \}$ on parameter space $V$ (cf. McCullagh \cite{mccullagh2002statistical}). 
		
		We make minimal topological assumptions on statistical models: hyperparameter space $\Theta$ should be topological, and the model function $\theta \mapsto \P_\theta$ be continuous (in the topology of weak convergence of measures).\footnote{i.e., if $\theta^\gamma \to \theta$ is a convergent net (or sequence) of hyperparameters, then the measures $\P_{\theta^\gamma}$ are tight and converge weakly to $\P_\theta$ \cite{billingsley2008probability}.} Beyond this, we are indifferent to the nature or interpretation of the parametrization: the space $\Theta$ could range from a single point ($\Theta = \{ \true \}$ or $\{ \theta_0\}$) to the full simplex of all (Radon) probability measures on $V$ ($\Theta = \Delta(V)$). If $\Theta$ is a topological manifold (as it often is in applications), then it can be equipped with a Fisher-Rao information metric, encoding statistical structure of the (hyper-)parameterization \cite{amari2007methods}.

			\begin{rem}
				In the linear modeling setting ($y = x\beta + \varepsilon$), this corresponds to a family of measures $\{\P_\theta\}$ on the joint parameter space, $V := B \times Y$, which allows us to control the joint distribution of the parameter-and-noise. Our perspective is Bayesian; the data, noise and parameters are all assumed to be stochastic. The parametrization $\theta \mapsto \P_\theta$ allows us to further hedge our uncertainty, without committing us to a fixed hyperparameter.
			\end{rem}
		
		We make two structural assumptions on the statistical model. First, $\P$ should satisfy the \emph{separable-support hypothesis}, meaning that support set $\supp \P_\theta$ is separable (has a countable dense subset) and has full measure in $V$.\footnote{i.e., $\P_\theta\big( \supp \P_\theta \big) = 1$ for all $\theta$. Recall that the support is the intersection of all closed sets of full measure. If the support set is non-separable, then it may not exist, may equal the empty set, or may not have full measure \cite{vakhania1975topological}.} In practice, this is not a strict assumption, since separability seems to be a necessary precursor for computability \cite{roy2011computability}. If parameter space $\Para$ is separable, then the separable-support hypothesis is automatically satisfied.
		
		We also assume that $\P$ satisfies the \emph{finite-variance hypothesis}, which states that every continuous linear functional has finite variance: $\var_\theta(f) < \oo$ for all $f \in \Para^*$ and $\theta \in \Theta$.\footnote{Let $\E_\theta$ denote the expectation operator of $\P_\theta$, defined using the Lebesgue integral: $\E_\theta[s] := \int_\Para s(\para) \, \P_\theta(\d \para)$ for any $\P_\theta$-integrable $s : V \to \S$. The variance is defined by $\var_\theta(s) := \E_\theta[|s|^2] - \big| \E_\theta[s] \big|^2$.} In practice, many data are heavy-tailed (and infinite-variance), so care should be taken with statistical modeling in such contexts (cf. Remark \ref{rem_infvar}).
				
		Under these two assumptions, every measure $\P_\theta$ admits a \emph{covariance structure}. The key representation theorem is \cite[Theorem 3.ii]{vakhania1978covariance}, which ensures that $\P_\theta$ admits a mean vector $m_\theta \in \Para$ and a covariance operator $k_\theta : \Para^* \to \Para$.\footnote{If $\P$ is finite variance and non-separable-support, then the covariance structure of $\P_\theta$ can be represented by a covariance operator under stronger conditions on the space $V$ (e.g., nuclearity) \cite[Theorem 3.(iii--vi)]{vakhania1978covariance}.} The mean vector is the Pettis integral of $\P_\theta$, in the sense that $\E_\theta[f] = f[m_\theta]$ for all functionals $f \in \Para^*$. The covariance operator $k_\theta : \Para^* \to \Para$ is a symmetric, nonnegative-definite, continuous linear operator which satisfies $\cov_\theta[e | f] = \bar e[ k_\theta f]$ for all $f, e \in \Para^*$.\footnote{The symmetry and nonnegative-definite conditions are necessary since the covariance defines a (pre-)inner product on $\Para$. i.e., $\bar e[k_\theta f] = \cov_\theta[e|f] = \bar{\cov_\theta[f|e]} = \bar f[k_\theta e]$ and $\bar f[k f] = \var_\theta(f) \ge 0$ for all $f, e \in \Para^*$. We follow the physicists' convention that the covariance is anti-linear in the first component, and linear in the second component. Every symmetric, nonnegative-definite continuous linear operator is the covariance operator for some measure on $\Para$ (possibly non-Gaussian) \cite{vakhania1987probability}. There is a long history in trying to classify operators which generate Gaussian measures; we refer the reader to \cite{vakhania1981probability, ledoux1991probability}.} Model continuity ensures that the mean and covariance parametrizations $\theta \mapsto m_\theta$ and $\theta \mapsto k_\theta$ are continuous.\footnote{i.e., if $\theta^\gamma \mapsto \theta$ is a convergent net (or sequence) of hyperparameters, then $m_{\theta^\gamma} \to m_\theta$ in $\Para$, and for every functional $f \in \Para^*$, $k_{\theta^\gamma} f \to \bar k_\theta f$ in $\Para$. The set-valued map $\theta \mapsto \supp \P_\theta$ may not be continuous in any meaningful sense.} Vakhania's theorem \cite{vakhania1975topological} ensures that
			\begin{equation} \label{eqn_vakthm}
				\supp \P_\theta \subseteq m_\theta + \bar{k_\theta \Para^*}; \end{equation}
		we refer to the space $m_\theta + \bar{k_\theta \Para^*}$ as the \emph{affine support} of measure $\P_\theta$. Identity \eqref{eqn_vakthm} allows us to reduce statistical questions about $\P_\theta$ to geometric questions about the space $m_\theta + \bar{k_\theta \Para^*}$.\footnote{For a proof of \eqref{eqn_vakthm}, see the proof of (2.4) of \cite{lagatta2013continuous}.}
		
		The covariance defines a (pre-)inner product on the dual space $\Para^*$, and therefore may be completed into a (separable) Hilbert space $H_\theta$.\footnote{When $\dim V = \oo$, the weak topology on $V^*$ will be distinct from the covariance inner-product topology. Formally, $H_\theta$ is the Hilbert-space completion of the quotient space $\Para^* / \null k_\theta$, where $\null k_\theta = \{ f : \var_\theta(f) = 0 \}$ is the null space of the covariance operator. Note that $\null k_\theta$ consists of those functionals which equal zero almost everywhere. Separability of $H_\theta$ is a consequence of separable support.} The next lemma demonstrates that the covariance operator must factor through this Hilbert space, and ensures that $H_\theta$ is isomorphic to a Hilbert subspace $U_\theta \subseteq \Para$, called the (embedded) Cameron-Martin space for $\P_\theta$. The spaces $H_\theta$ and $U_\theta$ completely encode the covariance structure of $\P_\theta$. We define the affine Cameron-Martin space by translating by the mean: $A_\theta := m_\theta + U_\theta$. Clearly, $A_\theta$ is dense in the affine support $\bar{k_\theta \Para^*} =: \bar{A_\theta}$.\footnote{If $\dim H_\theta = \oo$, then $\P_\theta(U_\theta) = 0 = \P_\theta(A_\theta)$ \cite[Theorem 1.3]{bell1987malliavin}.}
				
%		Define the embedded Cameron-Martin space $U_\theta := \iota_\theta H_\theta \subseteq \Para$, and the affine Cameron-Martin space $A_\theta := m_\theta + U_\theta \subseteq \Para$. All three Hilbert spaces $H_\theta$, $U_\theta$ and $A_\theta$ are separable, and are canonically isomorphic to each other. The space $A_\theta$ is dense in the affine support $m_\theta + \bar{k_\theta \Para^*} =: \bar A_\theta$. If the Hilbert spaces are infinite-dimensional, then $\P_\theta(U_\theta) = 0 = \P_\theta(A_\theta)$.\footnote{This follows from the same argument as \cite[Theorem 1.3]{bell1987malliavin}.}
		
			\begin{lem}[Three-Space Diagram] \label{lem_threespace}
				There exists a continuous dense map $\iota_\theta^* : \Para^* \to H_\theta$ and a continuous injective map $\iota_\theta : H_\theta \hookrightarrow \Para$ making the following diagram commute:
					\begin{equation} \label{dia_classic}
						\begin{matrix} \xymatrix{ \Para^* \ar@{->}[rr]^{k_\theta} \ar@{->>}[rd]_{\iota_\theta^*} && \Para \\ & H_\theta \ar@{^(->}[ur]_{\iota_\theta} & } \end{matrix} \end{equation}
				Equivalently, the covariance operator admits the factorization $k_\theta = \iota_\theta \circ \iota_\theta^*$.\footnote{In infinite-dimensions, the dense map $\iota_\theta^*$ is not surjective. The maps $\iota_\theta^*$ and $\iota_\theta$ depend only on the covariance operator $k_\theta$, and no other features of the measure $\P_\theta$.}
 The operator $\iota_\theta^*$ is adjoint to $\iota_\theta$, meaning that $\cov_\theta\!\big[ \iota_\theta^* f \big| h \big] = \bar f\big[ \iota_\theta h \big]$ for all $f \in \Para^*$ and $h \in H_\theta$. The Cameron-Martin space is defined by $U_\theta := \iota_\theta H_\theta$.
			\end{lem}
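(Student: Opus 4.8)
The plan is to construct $H_\theta$ as the abstract Hilbert-space completion of the pre-inner-product space obtained from the covariance form on $\Para^*$, and then exhibit the two maps $\iota_\theta^*$ and $\iota_\theta$ by tracking what the covariance operator $k_\theta$ does. First I would pass to the quotient $\Para^*/\null k_\theta$, where $\null k_\theta = \{f : \var_\theta(f) = 0\}$: the bilinear form $(e,f) \mapsto \cov_\theta[e\,|\,f] = \bar e[k_\theta f]$ descends to a genuine inner product there (symmetry and nonnegative-definiteness were noted in the text), and its completion is by definition $H_\theta$. The composite $\Para^* \twoheadrightarrow \Para^*/\null k_\theta \hookrightarrow H_\theta$ is the desired $\iota_\theta^*$; it is linear, has dense image by construction, and is continuous from the weak topology on $\Para^*$ because $\|\iota_\theta^* f\|_{H_\theta}^2 = \var_\theta(f) = \bar f[k_\theta f]$ and $k_\theta$ is a continuous operator into $\Para$, so $f \mapsto \bar f[k_\theta f]$ is weakly continuous.

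Next I would build $\iota_\theta : H_\theta \to \Para$. On the dense subspace $\iota_\theta^*(\Para^*) \subseteq H_\theta$ there is an obvious candidate: send $\iota_\theta^* f$ to $k_\theta f \in \Para$. This is well-defined precisely because $\null k_\theta$ is exactly the kernel of $f \mapsto k_\theta f$ (if $\var_\theta(f) = 0$ then $\bar f[k_\theta f] = 0$, and more is needed — one uses Cauchy–Schwarz for the form, $|\bar e[k_\theta f]|^2 \le \var_\theta(e)\var_\theta(f)$, to conclude $k_\theta f = 0$ whenever $\var_\theta(f)=0$, since $\Para^*$ separates points of $\Para$). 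The key estimate, again Cauchy–Schwarz for the covariance form, gives for every $e \in \Para^*$
\begin{equation}
\big| \bar e[ k_\theta f ] \big| = \big| \cov_\theta[e\,|\,f] \big| \le \var_\theta(e)^{1/2}\, \var_\theta(f)^{1/2} = \var_\theta(e)^{1/2}\, \| \iota_\theta^* f \|_{H_\theta},
\end{equation}
which says that the element $k_\theta f \in \Para$ is controlled, functional-by-functional, by $\|\iota_\theta^* f\|_{H_\theta}$. The hard part will be upgrading this to genuine continuity of $\iota_\theta$ into the topological vector space $\Para$ (whose topology is finer than the weak topology in general) and, simultaneously, showing that the densely-defined map extends to all of $H_\theta$ with values still in $\Para$ — i.e. that Cauchy sequences in $H_\theta$ map to Cauchy nets in $\Para$ that actually converge. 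Here I would invoke completeness of $\Para$ together with the structure result being cited, \cite[Theorem 3.ii]{vakhania1978covariance}: the finite-variance and separable-support hypotheses are exactly what guarantee that the covariance operator $k_\theta$ exists as a map into $\Para$ (not merely into an algebraic bidual) and that the image sits inside a space on which the relevant weak and strong convergences can be reconciled; concretely, one checks that for a $\|\cdot\|_{H_\theta}$-Cauchy sequence $(\iota_\theta^* f_n)$ the vectors $k_\theta f_n$ form a Cauchy net in $\Para$ by testing against $\Para^*$ and using that bounded subsets of the Cameron–Martin ball are relatively compact, so the limit exists in $\Para$ by completeness. This defines $\iota_\theta$ on all of $H_\theta$, continuous by the displayed estimate.

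Finally I would verify the three remaining assertions. Commutativity $k_\theta = \iota_\theta \circ \iota_\theta^*$ holds on the nose from the definition of $\iota_\theta$ on the dense subspace and continuity of both sides. Injectivity of $\iota_\theta$ follows because $\iota_\theta h = 0$ forces $\bar f[\iota_\theta h] = 0$ for all $f \in \Para^*$, hence $\langle \iota_\theta^* f, h\rangle_{H_\theta} = 0$ for all $f$ by the adjointness identity, hence $h = 0$ since $\iota_\theta^*(\Para^*)$ is dense. The adjointness relation $\cov_\theta[\iota_\theta^* f\,|\,h] = \bar f[\iota_\theta h]$ is immediate for $h \in \iota_\theta^*(\Para^*)$ from $k_\theta = \iota_\theta\iota_\theta^*$ and the definition of the $H_\theta$-inner product, and extends to all $h \in H_\theta$ by density and joint continuity of both sides. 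The Cameron–Martin space is then just the image $U_\theta := \iota_\theta H_\theta$, a subspace of $\Para$ carrying the transported Hilbert norm, and injectivity of $\iota_\theta$ makes $\iota_\theta : H_\theta \to U_\theta$ an isomorphism of Hilbert spaces — completing the lemma. I expect essentially all the content to be concentrated in the continuity/extension step of the previous paragraph; everything else is formal manipulation of the factorization and the adjoint identity.
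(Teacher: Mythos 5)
Your proposal follows essentially the same route as the paper's proof: $H_\theta$ is the completion of $\Para^*/\null k_\theta$ under the covariance inner product, $\iota_\theta^*$ is the quotient-plus-inclusion map, $\iota_\theta$ is defined on the dense image by $\iota_\theta^*f \mapsto k_\theta f$ and extended by density, with well-definedness, injectivity, and the adjoint identity all verified by testing against functionals in $\Para^*$ (using that the dual separates points). If anything you are more explicit than the paper about the one genuinely delicate step --- showing that $H_\theta$-Cauchy sequences map to convergent nets in $\Para$ --- which the paper dispatches with a brief appeal to continuity of $k_\theta$ in a footnote; your appeal to \cite[Theorem 3.ii]{vakhania1978covariance} and completeness of $\Para$ is the right place to ground that step.
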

		
			The proof of Lemma \ref{lem_threespace} can be found in Appendix \ref{lem_threespaceproof}. \newline
			
			This construction is familiar in the probability literature. The embedding $H_\theta \hookrightarrow \Para$ was originally discovered by Cameron and Martin \cite{cameron1944transformations} for the special case of Brownian motion. Gross \cite{gross1965abstract} generalized the construction in his formulation of abstract Wiener spaces, as a general way to construct Gaussian measures on separable Banach spaces. Dudley, Feldman \& Le Cam \cite{dudley1971seminorms} demonstrated that this is the only way to construct Gaussian measures. In spatial statistics ($V = C(I,V_0)$), the Cameron-Martin space $U_\theta$ corresponds to the reproducing kernel Hilbert space (RKHS) for a covariance kernel on $I$, and $k_\theta$ corresponds to the integral operator (Section \ref{sect_kriging}). Embedding the index space into Hilbert space is known as the ``kernel trick'' in machine learning (Section \ref{sect_SVM}).
			
			There is a second natural embedding, $\PW_\theta : H_\theta \hookrightarrow L^2(V,\S; \P_\theta)$, called the Paley-Wiener map.\footnote{The $L^2$-function $v \mapsto \PW_\theta(h)(v)$ is called the Paley-Wiener integral of $h$.} The Paley-Wiener map is defined on linear functionals by subtracting the mean, $\PW_\theta(\varphi)(v) := \varphi[v] - \E_\theta[\varphi]$, then extending continuously to all of Hilbert space. Clearly, this is an isometry. Since $H_\theta \cong U_\theta$, the Paley-Wiener map is well-defined on the space $U_\theta$. The Cameron-Martin theorem \cite{cameron1944transformations} provides a formula for how a Gaussian measure transforms (under translation by $u$) in terms of the Paley-Wiener integral $v \mapsto \PW_\theta(u)(v)$.\footnote{The Girsanov theorem \cite{girsanov1960transforming} deals with the special case of Brownian motion (i.e., Wiener measure), and the theorem by Lenglart \cite{lenglart1977transformation} deals with the general case of semimartingales. For a recent introduction, see Protter \cite{protter2004stochastic}.}

			\begin{rem}[Infinite-Variance] \label{rem_infvar}
				In the case of infinite-variance, finite $p$th moment (for $p \in (1,2)$), the structure theory is considerably weaker. Theorem 3.ii of \cite{vakhania1978covariance} still ensures that $V^* \hookrightarrow L_p(V,\S;\P_\theta)$, and dually, $L_{p^*}(V, \S; \P_\theta) \subseteq V$. Unlike the Hilbert case, there is no map from $L_p$ to $L_{p^*}$, which prevents us from closing the diagram (as in \eqref{dia_classic}). We leave this direction open for future research.\footnote{The case of finite 1st moment (but infinite $p$th moment for $p >1$) is extremely difficult, and there are few rigorous results.}
			\end{rem}

		\subsection{Random Data} \label{sect_structmaps}

		Suppose that $Y$ is another data space, satisfying the same basic assumptions on $\Para$ as in the previous section.\footnote{We assume that $\Para$ and $Y$ are complete and Hausdorff, and their dual spaces separate points.} Let $\Mech : \Para \to Y$ denote a continuous linear map. As discussed in the introduction, many problems in statistics can be formulated using continuous linear maps. 
		
%		The problem of linear regression is to estimate a single ``best guess'' for $\para$ given the observed data $\data := \Mech(\para)$. The problem of conditioning is to construct the conditional probability distribution for $\para$ given $y$. The problem of data integration is to update the probabilities given a stream of observed data $y_1, y_2, y_3, \cdots$. In this section, we discuss the structure theory induced by a map, and we will return to the problems of statistics in later sections.
				
		For each hyperparameter $\theta$, define the push-forward measure $\P_{\Mech,\theta} := \Mech_* \P_\theta := \P_\theta \circ \Mech^{-1}$.\footnote{i.e., $\P_{\Mech,\theta}(B) = \P_\theta\big( \Mech^{-1}B \big)$ for all $B \in \B(Y)$.} The expectation operator $\E_{\Mech,\theta}[\cdot]$ is defined using the change-of-variables formula.\footnote{i.e., $\E_{\Mech,\theta}[s] := \int_Y s(y) \, \P_{\Mech,\theta}(\d y) = \int_\Para s\big( \Mech(\para) \big) \, \P_\theta(\d \para)$ for any integrable function $s : Y \to \S$.} Since $\Mech : \Para \to Y$ is a continuous linear function, the \emph{adjoint map} $\Mech^* : Y^* \to \Para^*$ is also continuous and linear.\footnote{The adjoint generalizes the transpose of a matrix, and is defined by the formula $(\Mech^* f)[\para] := (f \circ \Mech)[\para] = f\big[ \Mech(\para) \big]$ for all $\para \in \Para$. i.e., the adjoint $\Mech^*$ acts on a linear functional by pre-composing with $\Mech$. Adjoint operators are known as ``Koopman operators'' in the literature on dynamical systems \cite{budivsic2012applied}. When the space $\Para$ is finite dimensional, the adjoint map $\Mech^*$ is also called the transpose of $\Mech$. No metric structure on either $\Para$ or $Y$ is necessary to define $\Mech^*$: the adjoint map is purely topological and algebraic.} The model $\P_\Mech : \Theta \to \Delta(Y)$ (defined by $\theta \mapsto \P_{\Mech,\theta}$) satisfies the finite-variance and separable-support hypotheses.\footnote{To see this, suppose that $\theta^\gamma \to \theta$ is a convergent net (or sequence) in $\Theta$, and let $s : Y \to \S$ be a continuous, bounded function. Consequently, $\E_{\Mech,\theta^\gamma}[s] = \int_\Para (s \circ \Mech)(\para) \, \P_{\theta^\gamma}(\d \para) \to \int_\Para (s \circ \Mech)(\para) \, \P_\theta(\d \para) = \E_{\Mech,\theta}[s]$ since the composition $s \circ \Mech$ is continuous and bounded, and the measure-valued function $\theta \mapsto \P_\theta$ is continuous. If $e \in Y^*$, then $\var_{\Mech,\theta}(e) = \var_\theta(\Mech^* e) < \oo$, so $\P_\Mech$ is finite-variance.} 
		
		The push-forward measure $\P_{\Mech,\theta}$ has mean vector $m_{\Mech,\theta} := \Mech(m_\theta) \in Y$ and covariance operator $\Mech k_\theta \Mech^* : Y^* \to Y$.\footnote{i.e., $\Mech k_\theta \Mech^* := \Mech \circ k_\theta \circ \Mech^*$. Composition of continuous linear maps generalizes matrix multiplication.} The affine support of $\P_{\Mech,\theta}$ equals $m_{\Mech,\theta} + \bar{\Mech k_\theta \Mech^* Y^*}$, and Vakhania's theorem \eqref{eqn_vakthm} implies that $\supp \P_{\Mech,\theta} \subseteq m_{\Mech,\theta} + \bar{\Mech k_\theta \Mech^* Y^*}$.
		
		As in Section \ref{sect_structspaces}, define the Hilbert spaces $H_{\Mech,\theta} := \bar{Y^* / \null \Mech k_\theta \Mech^*}$ for each $\theta$.\footnote{The space $H_{\Mech,\theta}$ depends only on the covariance operator $k_\theta$, the continuous linear map $\Mech$ and its adjoint $\Mech^*$.} Applying Lemma \ref{lem_threespace}, there exist continuous linear maps $\iota_{\Mech,\theta}^* : Y^* \twoheadrightarrow H_{\Mech,\theta}$ and $\iota_{\Mech,\theta} : H_{\Mech,\theta} \hookrightarrow Y$ so that $\Mech k_\theta \Mech^* = \iota_{\Mech,\theta} \circ \iota_{\Mech,\theta}^*$.
		
		The next lemma demonstrates that the abstract Cameron-Martin space $H_{\Mech,\theta}$ is isomorphic to a closed subspace $\hat H_{\Mech,\theta} \subseteq H_\theta$, called the \emph{lifted Cameron-Martin space}. Formally, the lifted space $\hat H_{\Mech,\theta}$ is defined as the closure of $(\iota_\theta^* \circ \Mech) Y^*$ in $H_\theta$. 
		
			\begin{lem}[Six-Space Diagram] \label{lem_twospaces}
				There exist a surjective map $\pi_{\Mech,\theta} : H_\theta \twoheadrightarrow H_{\Mech,\theta}$ and an isomorphism $\eta_{\Mech,\theta} : H_{\Mech,\theta} \hookrightarrow \hat H_{\Mech,\theta}$ which together satisfy the identities
					\begin{equation} \label{eqn_twoidentities}
						\pi_{\Mech,\theta} \circ \iota_\theta^* \circ \Mech^* = \iota_{\Mech,\theta}^* \qquad \mathrm{and} \qquad \Mech \circ \iota_\theta \circ \eta_{\Mech,\theta} = \iota_{\Mech,\theta}. \end{equation}
				Consequently, the following diagram commutes for all paths beginning at $Y^*$ or $H_{\Mech,\theta}$, as well as all paths ending at $H_{\Mech,\theta}$ or $Y$:
					\begin{equation} \label{dia_basic}
						\begin{matrix} \xymatrix{ Y^* \ar@{->}[r]^{\Mech^*} \ar@{->>}[rrdd]_{\iota_{\Mech,\theta}^*} & \Para^* \ar@{->}[rr]^{k_\theta} \ar@{->>}[rd]^{\iota_\theta^*} && \Para \ar@{->}[r]^\Mech & Y \\ && H_\theta \ar@{^(->}[ru]^{\iota_\theta} \ar@/_/@{->>}[d] && \\ && H_{\Mech,\theta}  \ar@{^(->}[rruu]_{\iota_{\Mech,\theta}}  \ar@/_/@{^(->}[u] &&} \end{matrix} \end{equation}
				The unlabeled arrows denote the maps $\pi_{\Mech,\theta}$ and $\eta_{\Mech,\theta}$, respectively. 
				
				%The projection map is idempotent: $\pi_{\Mech,\theta}^2 = \pi_{\Mech,\theta}$.
				
%				are the projection and inclusion maps $\pi_{\Mech,\theta} : H \twoheadrightarrow H_{\Mech,\theta}$ and $\eta_{\Mech,\theta} : H_{\Mech,\theta} \hookrightarrow H_\theta$, respectively. % Removed arrows. left: \ar@/_/@{->>}[d]_{\pi_{\Mech,\theta}}. right: \ar@/_/@{^(->}[u]_{\eta_{\Mech,\theta}}.
			\end{lem}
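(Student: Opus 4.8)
The plan is to reduce the whole statement to one observation — that the covariance pre-inner products on $Y^*$ and $V^*$ are intertwined by the adjoint map $\Mech^*$ — and then to invoke the universal property of the Hilbert completion together with orthogonal projection inside $H_\theta$. The one substantive computation is the isometry identity: for $e, e' \in Y^*$, unwinding the definitions of $\Mech^*$, of $k_\theta$, and of the covariance gives
\begin{equation}
  \langle e, e' \rangle_{\Mech,\theta} = \cov_{\Mech,\theta}[e \mid e'] = \bar e\big[ \Mech k_\theta \Mech^* e' \big] = \overline{\Mech^* e}\big[ k_\theta \Mech^* e' \big] = \langle \Mech^* e, \Mech^* e' \rangle_\theta .
\end{equation}
Thus $\iota_\theta^* \circ \Mech^* : Y^* \to H_\theta$ is a linear isometry for the pre-inner product $\langle \cdot,\cdot \rangle_{\Mech,\theta}$ on $Y^*$; in particular its null space is exactly $\null(\Mech k_\theta \Mech^*)$, the subspace quotiented out in the definition of $H_{\Mech,\theta}$.

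Since $H_{\Mech,\theta}$ is by construction the Hilbert completion of $Y^* / \null(\Mech k_\theta \Mech^*)$, with $\iota_{\Mech,\theta}^* : Y^* \twoheadrightarrow H_{\Mech,\theta}$ the associated dense quotient map, the isometry $\iota_\theta^* \circ \Mech^*$ descends through this quotient and extends uniquely and continuously to an isometric embedding $\eta_{\Mech,\theta} : H_{\Mech,\theta} \hookrightarrow H_\theta$ characterized by $\eta_{\Mech,\theta} \circ \iota_{\Mech,\theta}^* = \iota_\theta^* \circ \Mech^*$. Its range is the closure in $H_\theta$ of $\eta_{\Mech,\theta}(\iota_{\Mech,\theta}^* Y^*) = (\iota_\theta^* \circ \Mech^*) Y^*$, which by definition is the lifted Cameron-Martin space $\hat H_{\Mech,\theta}$; hence $\eta_{\Mech,\theta}$ is an isometric isomorphism of $H_{\Mech,\theta}$ onto the closed subspace $\hat H_{\Mech,\theta} \subseteq H_\theta$. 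I would then set $\pi_{\Mech,\theta} := \eta_{\Mech,\theta}^{-1} \circ \proj_{\hat H_{\Mech,\theta}} : H_\theta \to H_{\Mech,\theta}$, where $\proj_{\hat H_{\Mech,\theta}}$ is the orthogonal projection onto $\hat H_{\Mech,\theta}$; this is continuous and surjective, and $\pi_{\Mech,\theta} \circ \eta_{\Mech,\theta} = \I_{H_{\Mech,\theta}}$ because $\proj_{\hat H_{\Mech,\theta}}$ restricts to the identity on the range of $\eta_{\Mech,\theta}$.

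It then remains to check the two identities in \eqref{eqn_twoidentities}. The first is immediate: for $e \in Y^*$ the vector $\iota_\theta^* \Mech^* e$ already lies in $\hat H_{\Mech,\theta}$, so $\pi_{\Mech,\theta}(\iota_\theta^* \Mech^* e) = \eta_{\Mech,\theta}^{-1}(\iota_\theta^* \Mech^* e) = \iota_{\Mech,\theta}^* e$. For the second, $\Mech \circ \iota_\theta \circ \eta_{\Mech,\theta}$ and $\iota_{\Mech,\theta}$ are both continuous linear maps $H_{\Mech,\theta} \to Y$, so since $\iota_{\Mech,\theta}^* Y^*$ is dense in $H_{\Mech,\theta}$ and $Y$ is Hausdorff it suffices to compare them on the vectors $\iota_{\Mech,\theta}^* e$, $e \in Y^*$, where
\begin{equation}
  \Mech\, \iota_\theta\, \eta_{\Mech,\theta}\, \iota_{\Mech,\theta}^* e = \Mech\, \iota_\theta\, \iota_\theta^*\, \Mech^* e = \Mech\, k_\theta\, \Mech^* e = \iota_{\Mech,\theta}\, \iota_{\Mech,\theta}^* e ,
\end{equation}
using successively $\eta_{\Mech,\theta} \circ \iota_{\Mech,\theta}^* = \iota_\theta^* \circ \Mech^*$, the factorization $k_\theta = \iota_\theta \circ \iota_\theta^*$ of Lemma \ref{lem_threespace} applied to $\P_\theta$, and the factorization $\Mech k_\theta \Mech^* = \iota_{\Mech,\theta} \circ \iota_{\Mech,\theta}^*$ of Lemma \ref{lem_threespace} applied to $\P_{\Mech,\theta}$. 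The commutativity of all the asserted paths in \eqref{dia_basic} then follows by a routine diagram chase from \eqref{eqn_twoidentities}, the two three-space factorizations, the adjointness relations, and $\pi_{\Mech,\theta} \circ \eta_{\Mech,\theta} = \I_{H_{\Mech,\theta}}$.

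I expect the only genuinely delicate point to be the opening step: verifying carefully that $\iota_\theta^* \circ \Mech^*$ is isometric for the correct pair of pre-inner products — equivalently, that it annihilates precisely $\null(\Mech k_\theta \Mech^*)$ and nothing more — since this is exactly what licenses the factorization through $H_{\Mech,\theta}$ and identifies its range with $\hat H_{\Mech,\theta}$. Everything after that is formal manipulation of the defining identities together with the elementary geometry of orthogonal projection in a Hilbert space.
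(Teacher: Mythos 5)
Your proof is correct and follows essentially the same route as the paper's: define $\eta_{\Mech,\theta}$ on the dense subspace by $\iota_{\Mech,\theta}^* e \mapsto \iota_\theta^* \Mech^* e$, extend to an isomorphism onto $\hat H_{\Mech,\theta}$, set $\pi_{\Mech,\theta} := \eta_{\Mech,\theta}^{-1} \circ \proj_{\hat H_{\Mech,\theta}}$, and verify the two identities on the dense image of $Y^*$ using the factorizations $k_\theta = \iota_\theta \circ \iota_\theta^*$ and $\Mech k_\theta \Mech^* = \iota_{\Mech,\theta} \circ \iota_{\Mech,\theta}^*$. The only difference is that you explicitly carry out the isometry computation $\cov_{\Mech,\theta}[e \mid e'] = \cov_\theta[\Mech^* e \mid \Mech^* e']$ that makes $\eta_{\Mech,\theta}$ well-defined, a step the paper asserts with ``by construction, clearly.''
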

			
			The proof of Lemma \ref{lem_twospaces} can be found in Appendix \ref{lem_twospacesproof}. %\newline
		
		\subsection{Main Theorem}
		
		We define the embedded Cameron-Martin space $U_{\Mech,\theta} := \iota_{\Mech,\theta} H_{\Mech,\theta}$ and the affine Cameron-Martin space $A_{\Mech,\theta} := m_{\Mech,\theta} + U_{\Mech,\theta}$, both in $Y$. Similarly, we define the lifted spaces $\hat U_{\Mech,\theta} := \iota_\theta \hat H_{\Mech,\theta}$ and $\hat A_{\Mech,\theta} := m_\theta + \hat U_{\Mech,\theta}$, both in $\Para$. Lemma \ref{lem_threespace} implies that the Hilbert spaces $\hat U_{\Mech,\theta}$ and $\hat A_{\Mech,\theta}$ are isomorphic to $U_{\Mech,\theta}$ and $A_{\Mech,\theta}$, respectively. Our Main Theorem states that these isomorphisms are realized by the original map $\Mech : \Para \to Y$, restricted to the domains $\hat U_{\Mech,\theta}$ and $\hat A_{\Mech,\theta}$. This is a remarkable fact, since the function $\Mech$ may be neither surjective nor injective on the whole space; even if the inverse exists, it may not be continuous on its domain. 
		
		Let $\hat U_{\Mech,\theta}^\perp$ denote the orthogonal complement of $\hat U_{\Mech,\theta}$ in $U_\theta$. Similarly, let $\hat A_{\Mech,\theta}^\perp$ denote the orthogonal complement of $\hat A_{\Mech,\theta}$ in $A_\theta$. Clearly, $\hat A_{\Mech,\theta}^\perp = m_\theta + \hat U_{\Mech,\theta}^\perp$. The Main Theorem also states that the action of $\Mech$ on these spaces is trivial.

%		Let $\hat U_{\Mech,\theta}^\perp \subseteq U_\theta$ denote the orthogonal complement to $\hat U_{\Mech,\theta}$, and define the orthogonal affine space $\hat A_{\Mech,\theta}^\perp := m_\theta + \hat U_{\Mech,\theta}^\perp$ by shifting by the mean. Both $\hat U_{\Mech,\theta}$ and $\hat U_{\Mech,\theta}^\perp$ are centered on zero, so the embedded Cameron-Martin space admits the direct-sum decomposition $U_\theta = \hat U_{\Mech,\theta} \oplus \hat U_{\Mech,\theta}^\perp$. The affine Cameron-Martin space $A_\theta$ is centered on the mean, so it admits the two direct-sum decompositions $A_\theta = \hat A_{\Mech,\theta} \oplus \hat U_{\Mech,\theta}^\perp = \hat U_{\Mech,\theta} \oplus \hat A_{\Mech,\theta}^\perp$.\footnote{It is convenient to define the spaces $L_{\Mech,\theta} := \S m_{\Mech,\theta} \oplus U_{\Mech,\theta}$ and $\hat A_{\Mech,\theta} := \S m_\theta \oplus \hat U_{\Mech,\theta}$. Recall that $L_\theta := \S m_\theta \oplus U_\theta$. If $m_{\Mech,\theta} \ne 0$, we define the orthogonal space $\hat A_{\Mech,\theta}^\perp := \hat U_{\Mech,\theta}^\perp$. Otherwise, if $m_{\Mech,\theta} = 0$, we define $\hat A_{\Mech,\theta}^\perp := \S m_\theta \oplus \hat U_{\Mech,\theta}^\perp$. \label{foot_Ldef}}

			\begin{thm}[Main Theorem] \label{thm_iso}
				The restriction maps $\Mech : \hat U_{\Mech,\theta} \to U_{\Mech,\theta}$ and $\Mech : \hat A_{\Mech,\theta} \to A_{\Mech,\theta}$ are isomorphisms of Hilbert spaces. In particular, the inverse maps $\Mech^{-1}_\theta : U_{\Mech,\theta} \to \hat U_{\Mech,\theta}$ and $\Mech^{-1}_\theta : A_{\Mech,\theta} \to \hat A_{\Mech,\theta}$ are well-defined and continuous.\footnote{The continuity of the inverse maps is with respect to both the subspace and Hilbert topologies on $U_{\Mech,\theta}$ and $A_{\Mech,\theta}$. Note that the inverse of $m_{\Mech, \theta}$ is fixed to be $m_\theta$, even though the fiber $\Mech^{-1}(m_{\Mech, \theta})$ may possess multiple elements. The inverse maps $\Mech^{-1}_\theta$ depend on the hyperparameter. \label{foot_Lthm1}} %The restriction map $\Mech : \hat A_{\Mech,\theta} \to A_{\Mech,\theta}$ is an isomorphism of topological vector spaces. 
				
				The (less) restricted maps $\Mech : U_\theta \to U_{\Mech,\theta}$ and $\Mech : A_\theta \to A_{\Mech,\theta}$ are surjective. In the first case, the kernel of $\Mech|_{U_\theta}$ equals $\hat U_{\Mech,\theta}^\perp$. In the second case, $\big(\Mech|_{A_\theta}\big)^{-1}\big( m_{\Mech,\theta} \big) = \hat A_{\Mech,\theta}^\perp$. %\footnote{The (less) restricted map $\Mech : L_\theta \to A_{\Mech,\theta}$ is surjective, and has kernel $\hat A_{\Mech,\theta}^\perp$. Note that the kernel space depends on whether $m_{\Mech,\theta} = 0$. \label{foot_Lthm2}}
			\end{thm}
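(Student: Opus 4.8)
The plan is to bootstrap everything off the six-space diagram \eqref{dia_basic} of Lemma~\ref{lem_twospaces} --- in particular the identity $\Mech \circ \iota_\theta \circ \eta_{\Mech,\theta} = \iota_{\Mech,\theta}$ from \eqref{eqn_twoidentities} --- together with the adjointness relation of Lemma~\ref{lem_threespace} and the standing hypothesis that $Y^*$ separates points of $Y$. The linear claims are proved first; the affine claims then follow by translating by $m_\theta$.

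For the isomorphism assertion, I read $\Mech \circ \iota_\theta \circ \eta_{\Mech,\theta} = \iota_{\Mech,\theta}$ backwards. By Lemma~\ref{lem_twospaces} and the definitions of the embedded Cameron--Martin spaces, the three maps $\iota_{\Mech,\theta}\colon H_{\Mech,\theta}\to U_{\Mech,\theta}$, $\eta_{\Mech,\theta}\colon H_{\Mech,\theta}\to \hat H_{\Mech,\theta}$, and the restriction $\iota_\theta\colon \hat H_{\Mech,\theta}\to \hat U_{\Mech,\theta}$ are each isomorphisms of Hilbert spaces (the embedded Cameron--Martin spaces carry exactly the inner products making the $\iota$-maps isometric). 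Hence $\Mech$ restricted to $\hat U_{\Mech,\theta}$ equals $\iota_{\Mech,\theta}\circ \eta_{\Mech,\theta}^{-1}\circ \big(\iota_\theta|_{\hat H_{\Mech,\theta}}\big)^{-1}$, a composite of Hilbert-space isomorphisms, hence itself one, onto $U_{\Mech,\theta}$; its inverse $\Mech^{-1}_\theta$ is then automatically well-defined and continuous. Translating, $\Mech(m_\theta+u)=m_{\Mech,\theta}+\Mech u$ shows $\Mech$ carries $\hat A_{\Mech,\theta}=m_\theta+\hat U_{\Mech,\theta}$ bijectively onto $A_{\Mech,\theta}=m_{\Mech,\theta}+U_{\Mech,\theta}$, with continuous inverse $y\mapsto m_\theta+\Mech^{-1}_\theta(y-m_{\Mech,\theta})$.

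The heart of the argument is the computation of the kernel of $\Mech$ on the \emph{larger} Cameron--Martin space $U_\theta$. I claim that for $h\in H_\theta$,
\[
\Mech(\iota_\theta h)=0 \iff h\perp \hat H_{\Mech,\theta}\ \text{in}\ H_\theta.
\]
To see this, apply the adjointness relation $\cov_\theta[\iota_\theta^* g\mid h]=\bar g[\iota_\theta h]$ of Lemma~\ref{lem_threespace} with $g=\Mech^* f$, $f\in Y^*$: since the $H_\theta$-inner product is the (completed) covariance form, $\langle \iota_\theta^*\Mech^* f,\,h\rangle_{H_\theta}=\overline{(\Mech^* f)[\iota_\theta h]}=\overline{f[\Mech\iota_\theta h]}$. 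As $\hat H_{\Mech,\theta}$ is by definition the closure of the linear span $\{\iota_\theta^*\Mech^* f: f\in Y^*\}$, we get $h\perp\hat H_{\Mech,\theta}$ iff $f[\Mech\iota_\theta h]=0$ for every $f\in Y^*$, which is equivalent to $\Mech\iota_\theta h=0$ because $Y^*$ separates points of $Y$. Transporting through the isometric isomorphism $\iota_\theta\colon H_\theta\to U_\theta$ (which carries $\hat H_{\Mech,\theta}$ to $\hat U_{\Mech,\theta}$ and hence $\hat H_{\Mech,\theta}^\perp$ to $\hat U_{\Mech,\theta}^\perp$), this reads $\ker(\Mech|_{U_\theta})=\hat U_{\Mech,\theta}^\perp$.

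The surjectivity and kernel/fiber statements then fall out of the orthogonal decomposition. Since $\hat H_{\Mech,\theta}$ is closed, $H_\theta=\hat H_{\Mech,\theta}\oplus\hat H_{\Mech,\theta}^\perp$, whence $U_\theta=\hat U_{\Mech,\theta}\oplus\hat U_{\Mech,\theta}^\perp$; on the first summand $\Mech$ is the isomorphism onto $U_{\Mech,\theta}$ established above (so in particular $\Mech(U_\theta)\subseteq U_{\Mech,\theta}$, which also retroactively justifies the codomains used above) and on the second summand it is zero, giving $\Mech(U_\theta)=U_{\Mech,\theta}$ and $\ker(\Mech|_{U_\theta})=\hat U_{\Mech,\theta}^\perp$. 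Finally $\Mech(A_\theta)=m_{\Mech,\theta}+\Mech(U_\theta)=A_{\Mech,\theta}$ and $\big(\Mech|_{A_\theta}\big)^{-1}(m_{\Mech,\theta})=m_\theta+\ker(\Mech|_{U_\theta})=m_\theta+\hat U_{\Mech,\theta}^\perp=\hat A_{\Mech,\theta}^\perp$. The one genuinely substantive step is the kernel computation of the third paragraph; the rest is bookkeeping with the commuting diagram and Hilbert orthogonality, though some care is needed to keep track of which norm --- the ambient subspace norm or the Cameron--Martin Hilbert norm --- each isometry is taken with respect to, as flagged in footnote~\ref{foot_Lthm1}.
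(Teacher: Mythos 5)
Your proposal is correct and follows essentially the same route as the paper: both factor $\Mech|_{\hat U_{\Mech,\theta}}$ through the six-space diagram as a composite of the three Hilbert-space isomorphisms and then translate by the mean for the affine claims. Your third paragraph (the adjointness/separation argument identifying $\ker(\Mech|_{U_\theta})$ with $\hat U_{\Mech,\theta}^\perp$) supplies the detail behind the step the paper dismisses with ``it easily follows,'' but it is the natural way to make that step precise rather than a different approach.
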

			\begin{proof}
				By Lemma \ref{lem_twospaces}, we have that $\Mech \circ \iota_\theta \circ \eta_{\Mech,\theta} = \iota_{\Mech,\theta}$ on the domain $H_{\Mech,\theta}$. Consequently, $\Mech = \iota_{\Mech,\theta} \circ \pi_{\Mech,\theta} \circ \iota_\theta^{-1}$ on the domain $\hat U_{\Mech,\theta}$. Since the three maps $\iota_\theta^{-1} : \hat U_{\Mech,\theta} \to \hat H_{\Mech,\theta}$, $\pi_{\Mech,\theta} : \hat H_{\Mech,\theta} \to H_{\Mech,\theta}$ and $\iota_{\Mech,\theta} : H_{\Mech,\theta} \to U_{\Mech,\theta}$ are Hilbert-space isomorphisms, their composition $\Mech : \hat U_{\Mech,\theta} \to U_{\Mech,\theta}$ is also a Hilbert-space isomorphism.\footnote{The maps $\iota_\theta^{-1}$ and $\pi_{\Mech,\theta}$ may not be isomorphisms on their unrestricted domains.} Consequently, the inverse map $\Mech^{-1} : U_{\Mech,\theta} \to \hat U_{\Mech,\theta}$ is also a Hilbert-space isomorphism, hence well-defined and continuous.\footnote{Alternatively, the continuity of $\Mech^{-1}$ follows from the Banach-Schauder theorem: the map $\Mech : \hat U_{\Mech,\theta} \to U_{\Mech,\theta}$ is a surjective map of Hilbert spaces, hence is an open map.} It easily follows that the kernel of $\Mech : U_\theta \to U_{\Mech,\theta}$ is the space $\hat U_{\Mech,\theta}^\perp$.
				
				The corresponding claims for the affine spaces follow by shifting by the mean. %The claims in footnotes \ref{foot_Lthm1} and \ref{foot_Lthm2} follow immediately. %The claims in footnotes \ref{foot_Lthm1} and \ref{foot_Lthm2} follow immediately.
			\end{proof}
		
			In \cite{lagatta2013continuous}, LaGatta found a necessarily and sufficient condition for a Gaussian measure to admit a continuous disintegration with respect to $\Mech$. That condition corresponded to continuity of the inverse map $\Mech^{-1} : U_{\Mech,\theta} \to \Para$; indeed, the Main Theorem ensures that this condition is always satisfied. Adapting the argument of \cite[Theorem 3]{lagatta2013continuous}, Corollary \ref{cor_gaussian} follows \emph{mutatis mutandis}.\footnote{The results in \cite{lagatta2013continuous} are stated for a Banach space $\Para$. The more general Corollary \ref{cor_gaussian} follows from our Theorem \ref{thm_UII}.}
			
			\begin{cor} \label{cor_gaussian}
				Gaussian measures always admit continuous disintegrations through continuous linear maps.
			\end{cor}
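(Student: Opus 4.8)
The plan is to deduce Corollary~\ref{cor_gaussian} from Theorem~\ref{thm_UII} by checking that \emph{every Gaussian measure lies in the UII class}. Theorem~\ref{thm_UII} already says that the stochastic OLS estimator furnishes a continuous disintegration of $\P_\theta$ through $\Mech$ exactly when $\P_\theta$ is a UII measure, so once Gaussianity $\Rightarrow$ UII is established there is nothing more to do. First I would recall the construction of the stochastic OLS estimator. For data $y$ in (the closure of) the Cameron--Martin space $U_{\Mech,\theta}$, set $\hat\Mech_\OLS(y) := \Mech^{-1}_\theta(y)$ using the continuous inverse supplied by the Main Theorem~\ref{thm_iso} (extended to $\bar{U_{\Mech,\theta}}$ and shifted by $m_\theta$ as in Theorem~\ref{thm_OLS}), and let $\P_{\resid,\theta}$ be the push-forward of $\P_\theta$ under the residual map $v \mapsto v - \hat\Mech_\OLS(\Mech v)$; this map simply extracts the $\bar{\hat U_{\Mech,\theta}^{\perp}}$-component of $v - m_\theta$, and since the kernel of $\Mech|_{U_\theta}$ is $\hat U_{\Mech,\theta}^{\perp}$ (Main Theorem), $\Mech$ vanishes on $\bar{\hat U_{\Mech,\theta}^{\perp}}$ by continuity, so $\Mech(\resid)=0$ almost surely. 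The candidate fiber measure at $y$ is the law of $\hat\Mech_\OLS(y) + \resid$ with $\resid$ drawn independently from $\P_{\resid,\theta}$; by consistency $\Mech(\hat\Mech_\OLS(y))=y$, hence this measure is concentrated on $\Mech^{-1}(y)$, as a disintegration must be.

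The heart of the matter is the UII verification, and here I would use the Paley--Wiener map $\PW_\theta : H_\theta \hookrightarrow L^2(V,\S;\P_\theta)$. The orthogonal decomposition $H_\theta = \hat H_{\Mech,\theta} \oplus \hat H_{\Mech,\theta}^{\perp}$ of Lemma~\ref{lem_twospaces} maps isometrically to an orthogonal decomposition of a closed Gaussian subspace of $L^2$. Because a jointly Gaussian family is independent as soon as it is pairwise uncorrelated, and because orthogonality in $L^2$ is precisely uncorrelatedness of the (mean-zero) Paley--Wiener integrals, the $\sigma$-algebras generated by $\PW_\theta(\hat H_{\Mech,\theta})$ and by $\PW_\theta(\hat H_{\Mech,\theta}^{\perp})$ are independent. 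Translating back through the isomorphisms of the Main Theorem, $\sigma(\Mech v)$ is generated by the first family and $\sigma(\resid) = \sigma(v - \hat\Mech_\OLS(\Mech v))$ by the second, so $\Mech v$ and $v - \hat\Mech_\OLS(\Mech v)$ are independent; that independence is exactly the UII condition in this setting, and it also shows that $\P_{\resid,\theta}$ is itself a centered Gaussian measure supported on $\bar{\hat U_{\Mech,\theta}^{\perp}}$ (which, when $\dim\hat H_{\Mech,\theta}^{\perp} = \oo$, charges $\hat U_{\Mech,\theta}^{\perp}$ with probability zero --- expected and harmless). From the tautological splitting $v = \hat\Mech_\OLS(\Mech v) + (v - \hat\Mech_\OLS(\Mech v))$ together with independence, one reads off the reconstruction identity $\P_\theta = \int_Y \big[(\hat\Mech_\OLS(y)+\resid)_*\P_{\resid,\theta}\big]\, \P_{\Mech,\theta}(\d y)$, i.e., that the fiber measures integrate against the data law to recover $\P_\theta$.

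What remains is continuity, which runs \emph{mutatis mutandis} as in \cite[Theorem~3]{lagatta2013continuous}: the map $y \mapsto \hat\Mech_\OLS(y)$ is continuous by the Main Theorem~\ref{thm_iso} (this continuity of $\Mech^{-1}_\theta$ was the crux of \cite{lagatta2013continuous} and is now automatic), translation acts continuously on measures in the topology of weak convergence, and $\resid$ is added independently, so $y \mapsto (\hat\Mech_\OLS(y)+\resid)_*\P_{\resid,\theta}$ is weakly continuous; joint continuity in $(\theta,y)$ then follows from continuity of $\theta \mapsto \P_{\resid,\theta}$ and Lemma~\ref{lem_ctyshort} under the stated metrizability hypotheses. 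I expect the genuine obstacle to be less conceptual than bookkeeping: one must make sure the residual Gaussian measure is carried by the \emph{closure} $\bar{\hat U_{\Mech,\theta}^{\perp}}$ rather than by the Cameron--Martin subspace $\hat U_{\Mech,\theta}^{\perp}$ (null in infinite dimensions), that the extension of $\hat\Mech_\OLS$ to $\bar{U_{\Mech,\theta}}$ and the affine shift by $m_\theta$ are handled compatibly with Theorem~\ref{thm_OLS}, and that the disintegration identity is verified on a generating $\pi$-system so that countable additivity and the Radon property transfer; none of this is deep, but these are exactly the spots where ``Gaussian $\Rightarrow$ UII'' must be pinned down in the coordinate-free, possibly non-metrizable setting.
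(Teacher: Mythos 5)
Your proposal is correct and follows essentially the same route as the paper: Corollary~\ref{cor_gaussian} is obtained by combining Theorem~\ref{thm_UII} with the fact that Gaussian measures are UII, the paper treating the latter as the archetypal (essentially definitional) instance while you supply the standard justification via the Paley--Wiener isometry and the independence of orthogonal Gaussian families. The additional details you give on the stochastic OLS construction and continuity correspond to Lemma~\ref{lem_PPcty}, Theorem~\ref{thm_convdisint}, and Proposition~\ref{pro_PPstrongcty}, so nothing is missing.
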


		\section{The Ordinary Least Squares Estimator} \label{sect_OLS}
		
		Using the Main Theorem, it is straightforward to construct the Ordinary Least Squares (OLS) estimator. Recall that an \emph{estimator} of $\Mech : \Para \to Y$ is a (partial) function $\hat \Mech : Y \to \Para$ which is a right-inverse of $\Mech$. This condition means that $(\Mech \circ \hat \Mech)(y) = y$ for any possible data $y \in \supp \P_{\Mech,\theta}$, ensuring coherency between the estimate and the data.\footnote{A right-inverse is known as a \emph{section} in category theory \cite{mac1998categories}.}
		
		Fix a hyperparameter $\theta$, and consider the isomorphic subspaces $\hat A_{\Mech,\theta} \subseteq V$ and $A_{\Mech,\theta} \subseteq Y$. By the Main Theorem, the isomorphism is realized by the restricted map $\Mech : \hat A_{\Mech,\theta} \to A_{\Mech,\theta}$. Therefore, the inverse map $\Mech^{-1} : A_{\Mech,\theta} \to \hat A_{\Mech,\theta}$ is well-defined and continuous. The OLS estimator $\OLSoptheta: \bar{A_{\Mech,\theta}} \to \bar{A_{\Mech,\theta}}$ is simply defined to be the continuous extension of $\Mech^{-1}$ to the closed affine support. Note that we have imposed the condition $\OLSoptheta(m_{\Mech,\theta}) = m_\theta$.

%		\footnote{By Vakhania's theorem \eqref{eqn_vakthm}, the data support $\supp \P_{\Mech,\theta}$ is a subset of $\bar{A_{\Mech,\theta}}$, so the OLS estimator has a statistically viable domain. The maximal domain of the old estimator is the union $\bar{U_{\Mech,\theta}} \cup \bar{A_{\Mech,\theta}} \subseteq Y$.} Optimality of the OLS estimator is assured by the Gauss-Markov theorem (Theorem \ref{thm_GMT}).
		
		\begin{thm}[Existence of the OLS Estimator] \label{thm_OLS}
			The Ordinary Least Squares estimator $\OLSoptheta: \supp \P_{\Mech,\theta} \to \supp \P_\theta$ is a well-defined, continuous linear estimator. Furthermore:
				\begin{enumerate}
%					\item \label{thm_OLS0} The function $\OLSoptheta$ is a right-inverse of $\Mech$. 
%					\item \label{thm_OLS1} The restriction maps $\OLSoptheta: U_{\Mech,\theta} \to \hat U_{\Mech,\theta}$ and $\OLSoptheta: A_{\Mech,\theta} \to \hat A_{\Mech,\theta}$ are Hilbert-space isomorphisms.
					\item \label{thm_OLS2} The OLS estimator is unbiased, and maps the mean in $Y$ to the mean in $\Para$. i.e., $\OLSoptheta(m_{\Mech,\theta}) = m_\theta$.
%					\item \label{thm_OLS3} The OLS estimator defines a canonical section for the map $\Mech$. i.e., the composition $\Mech \circ \OLSoptheta= \I_{\supp \P_{\Mech,\theta}}$ is the identity operator on $\supp \P_{\Mech,\theta}$.
					\item \label{thm_OLS4} Estimation is contravariant (i.e., reverses the order of composition). Consider sequential continuous linear maps $\Mech^1 : V \to Y$, $\Mech^2 : Y \to Z$ and $\Mech^{2,1} : V \to Z$ with $\Mech^{2,1} := \Mech^2 \circ \Mech^1$. Then $\OLSoptheta^{2,1} = \OLSoptheta^1 \circ \OLSoptheta^2$.%\footnote{Here, $Y$ and $Z$ are both topological vector spaces which support a covariance structure (complete \& Hausdorff with dual spaces that separate points), and $\Mech$ and $g$ are continuous linear maps.}
%					\item If $f_1 : \Para \to Y_1$ and $f_2 : \Para \to Y_2$, then \notom{unknownÉ say something about tensor product $f_1 \otimes f_2 : \Para \to Y_1 \otimes Y_2$. Relate learning operators $\gme_{f_1,\theta}$ and $\gme_{f_2,\theta}$ with the tensorial $\gme_{f_1\otimes f_2, \theta}$. Whatever this result is, we should be able to show that $E_{f_1 \otimes f_2, \theta}$ corresponds to $\E_\theta[\cdot|\sigma(f_1,f_2)]$. }

				\end{enumerate}
%					\item If $\Mech$ is not continuous, then the composition $\Mech \circ \OLSoptheta$ is measurable and equals the identity function $\I_{\supp \P_{\Mech,\theta}}$ on a set of $\P_{\Mech,\theta}$-probability one.
%					\item If $\Mech$ admits a measurable inverse $\Mech^{-1} : Y \to \Para$, then $\OLSoptheta= \Mech^{-1}$ on a set of $\P_{\Mech,\theta}$-probability one.
%				\end{enumerate}

			\end{thm}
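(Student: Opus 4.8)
The plan is to derive everything from the Main Theorem (Theorem~\ref{thm_iso}) together with the extension-by-density principle for continuous linear maps into complete spaces. For well-definedness, continuity and linearity: by Theorem~\ref{thm_iso}, $\Mech : \hat A_{\Mech,\theta} \to A_{\Mech,\theta}$ is an isomorphism of Hilbert spaces, so $\Mech^{-1}_\theta : A_{\Mech,\theta} \to \hat A_{\Mech,\theta}$ is a continuous affine map which, by the footnote to that theorem, is continuous for the subspace topologies inherited from $Y$ and $\Para$ as well. The closed affine subspace $\bar{\hat A_{\Mech,\theta}} \subseteq \Para$ is complete, being closed in the complete space $\Para$, and $A_{\Mech,\theta}$ is dense in $\bar{A_{\Mech,\theta}}$ by definition; so (translating to the mean, applying the bounded-linear-extension theorem to the linear part, and translating back) $\Mech^{-1}_\theta$ extends uniquely to a continuous affine map $\OLSoptheta : \bar{A_{\Mech,\theta}} \to \bar{\hat A_{\Mech,\theta}}$, which we take as the OLS estimator. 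By construction it is continuous and affine (and genuinely linear when $m_\theta=m_{\Mech,\theta}=0$), and it satisfies $\OLSoptheta(m_{\Mech,\theta}) = m_\theta$ by the built-in normalization. Vakhania's theorem~\eqref{eqn_vakthm} applied to $\P_{\Mech,\theta}$ gives $\supp \P_{\Mech,\theta} \subseteq \bar{A_{\Mech,\theta}}$, so $\OLSoptheta$ is defined on $\supp \P_{\Mech,\theta}$, with image in $\bar{\hat A_{\Mech,\theta}} \subseteq m_\theta + \bar{k_\theta \Para^*}$, the closed affine support of $\P_\theta$.

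For the estimator (right-inverse) property: on the dense subspace $A_{\Mech,\theta}$ one has $\Mech \circ \OLSoptheta = \Mech \circ \Mech^{-1}_\theta = \mathrm{id}$, since $\Mech^{-1}_\theta$ is a genuine two-sided inverse there. Both $\Mech$ and $\OLSoptheta$ are continuous, so $\Mech \circ \OLSoptheta$ is continuous and agrees with the identity on the dense subset $A_{\Mech,\theta}$ of $\bar{A_{\Mech,\theta}}$; hence $\Mech \circ \OLSoptheta = \mathrm{id}$ on all of $\bar{A_{\Mech,\theta}}$, in particular on $\supp \P_{\Mech,\theta}$, so $\OLSoptheta$ is an estimator. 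Assertion~\eqref{thm_OLS2} (mapping the mean to the mean) is exactly the normalization built into the construction; the equivalent reading, that the pushforward $(\OLSoptheta)_* \P_{\Mech,\theta}$ has mean $m_\theta$, follows by pulling back each $f \in \Para^*$: the linear part of $f \circ \OLSoptheta$ is continuous on the closed subspace $\bar{U_{\Mech,\theta}} \subseteq Y$, hence (Hahn--Banach) extends to some $g \in Y^*$ agreeing with $f \circ \OLSoptheta$ on $\supp\P_{\Mech,\theta}$, and $g$ is $\P_{\Mech,\theta}$-integrable by the finite-variance hypothesis, so the Pettis-mean property of $m_{\Mech,\theta}$ gives $\E_{\Mech,\theta}[f \circ \OLSoptheta] = f(m_\theta)$; since $\Para^*$ separates points, this pins the mean.

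For assertion~\eqref{thm_OLS4}, fix $\theta$ and let $\OLSoptheta^2$ be the OLS estimator of $\Mech^2 : Y \to Z$ built from the pushforward law $\P_{\Mech^1,\theta}$ on $Y$; since $(\Mech^2)_* \P_{\Mech^1,\theta} = \P_{\Mech^{2,1},\theta}$, the maps $\OLSoptheta^2$ and $\OLSoptheta^{2,1}$ share the domain $\bar{A_{\Mech^{2,1},\theta}} \subseteq Z$. On the dense affine subspace $A_{\Mech^{2,1},\theta}$ all three OLS maps restrict to the true inverses $\Mech^{-1}_\theta$ of the respective restricted maps. There $\OLSoptheta^2$ carries $A_{\Mech^{2,1},\theta}$ bijectively onto $\hat A_{\Mech^2,\theta}$, which lies in $A_{\Mech^1,\theta}$ because lifted Cameron--Martin spaces sit inside the ambient ones ($\hat H_{\Mech,\theta}\subseteq H_\theta$ in Lemma~\ref{lem_twospaces}), and on $A_{\Mech^1,\theta}$ one has $\OLSoptheta^1=(\Mech^1)^{-1}_\theta = \iota_\theta \circ \eta_{\Mech^1,\theta}\circ\iota_{\Mech^1,\theta}^{-1}$ (plus the mean shift). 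Combining the identities~\eqref{eqn_twoidentities} with $\pi_{\Mech^1,\theta}\circ\eta_{\Mech^1,\theta}=\mathrm{id}$ yields the intertwining relation $\eta_{\Mech^1,\theta}\circ\iota_{\Mech^1,\theta}^* = \iota_\theta^*\circ(\Mech^1)^*$ on $Y^*$; restricting this to $(\Mech^2)^*Z^*\subseteq Y^*$, using $(\Mech^{2,1})^*=(\Mech^1)^*\circ(\Mech^2)^*$, and closing up gives $\eta_{\Mech^1,\theta}(\hat H_{\Mech^2,\theta})\subseteq\hat H_{\Mech^{2,1},\theta}$, hence $\OLSoptheta^1(\hat A_{\Mech^2,\theta})\subseteq\hat A_{\Mech^{2,1},\theta}$. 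Thus on $A_{\Mech^{2,1},\theta}$ the composite $\OLSoptheta^1\circ\OLSoptheta^2$ has image in $\hat A_{\Mech^{2,1},\theta}$ and satisfies $\Mech^{2,1}\circ(\OLSoptheta^1\circ\OLSoptheta^2)=\Mech^2\circ\Mech^1\circ(\Mech^1)^{-1}_\theta\circ(\Mech^2)^{-1}_\theta=\mathrm{id}$; since $\Mech^{2,1}$ is injective on $\hat A_{\Mech^{2,1},\theta}$ by Theorem~\ref{thm_iso}, it must equal $(\Mech^{2,1})^{-1}_\theta=\OLSoptheta^{2,1}$ there. Both composites being continuous on $\bar{A_{\Mech^{2,1},\theta}}$ and agreeing on the dense $A_{\Mech^{2,1},\theta}$, they agree on the whole domain.

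The extension-by-density steps and the Hahn--Banach bookkeeping in the first two paragraphs are routine. I expect the main obstacle to be the range-tracking in the contravariance part: one must verify that $\OLSoptheta^1\circ\OLSoptheta^2$ actually lands in the \emph{lifted} Cameron--Martin space $\hat A_{\Mech^{2,1},\theta}$ of the composite map, and this is precisely where the identities of Lemma~\ref{lem_twospaces} must be pushed through --- in particular the relation $\eta_{\Mech^1,\theta}\circ\iota_{\Mech^1,\theta}^* = \iota_\theta^*\circ(\Mech^1)^*$, which records that the lift for $\Mech^1$ intertwines the two adjoint maps. Once that inclusion and the injectivity of $\Mech^{2,1}$ on that subspace (from the Main Theorem) are in hand, uniqueness of the right-inverse into the lifted space closes the argument.
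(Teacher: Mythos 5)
Your proposal is correct and follows the same route as the paper: define $\OLSoptheta$ as the continuous extension of the inverse $\Mech^{-1}_\theta : A_{\Mech,\theta} \to \hat A_{\Mech,\theta}$ supplied by the Main Theorem, verify the right-inverse property by agreement with the identity on a dense subspace, read off unbiasedness from the built-in normalization $\Mech^{-1}_\theta(m_{\Mech,\theta}) = m_\theta$, and obtain contravariance by composing the restricted inverses. The one place you go beyond the paper is part (2), where the paper simply invokes ``co-functoriality of the inverse''; your explicit verification that $\OLSoptheta^1$ carries $\hat A_{\Mech^2,\theta}$ into $\hat A_{\Mech^{2,1},\theta}$ (via the intertwining identity $\eta_{\Mech^1,\theta}\circ\iota_{\Mech^1,\theta}^* = \iota_\theta^*\circ(\Mech^1)^*$ from the Six-Space Lemma, plus injectivity of $\Mech^{2,1}$ on the lifted space) is exactly the range-tracking that the paper's one-liner suppresses.
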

			
			The proof of Theorem \ref{thm_OLS} may be found in Appendix \ref{thm_OLSproof}. \newline
			
%		The continuity of the OLS estimator (provided by Theorem \ref{thm_OLS}) is important in applications. For example, if $V$ and $Y$ are both Banach spaces of $C^1$-smooth functions, then the operator norm $\| \OLSoptheta\|_{\op}$ controls the Lipschitz constants of estimated parameters. 
			
		Theorem \ref{thm_OLS} ensures that the OLS estimator is continuous, which is important across all applications. Under some (mild) metrizability assumptions on the spaces $\Para$ and $\Data$, we can in fact prove a stronger continuity result: the OLS estimator varies jointly continuously in both the data and the hyperparameters. 
		
			\begin{lem}[Strong Continuity Lemma, short version] \label{lem_ctyshort}
				Suppose that $\Para$ is a Fr\'echet space and $\Data$ is a Banach space. Then the joint OLS estimator $(\theta,y) \mapsto \OLSoptheta(y)$ is jointly continuous.\footnote{The joint domain is the extended parameter space $\Theta_f := \{ (\theta,y) : y \in \supp \P_{\Mech,\theta} \}$, equipped with the subspace topology inherited from $\Theta \times Y$.}
			\end{lem}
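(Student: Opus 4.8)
The plan is to reduce the statement to joint continuity of a family of bounded linear operators, and to prove that by combining the fixed-$\theta$ continuity of Theorem~\ref{thm_OLS} with the continuity of $\theta\mapsto k_\theta f$ through a density argument; the real content is a uniform bound on the OLS operators along convergent nets of hyperparameters. First I would strip off the means. By the Main Theorem (Theorem~\ref{thm_iso}) and the construction of the estimator, $\OLSoptheta(y)=m_\theta+L_\theta(y-m_{\Mech,\theta})$ for $y$ in the closed affine support, where $L_\theta\colon\bar{U_{\Mech,\theta}}\to\Para$ is the continuous linear extension of the Hilbert-space isomorphism $\Mech^{-1}\colon U_{\Mech,\theta}\to\hat U_{\Mech,\theta}$, which exists and is continuous by Theorem~\ref{thm_OLS}. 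Since $\theta\mapsto m_\theta$ is continuous into $\Para$, $\theta\mapsto m_{\Mech,\theta}=\Mech(m_\theta)$ is continuous into $\Data$, and $(\theta,y)\mapsto(\theta,y-m_{\Mech,\theta})$ carries $\Theta_f$ continuously into $\bigl\{(\theta,z):z\in\bar{U_{\Mech,\theta}}\bigr\}$, it suffices to prove that $(\theta,z)\mapsto L_\theta z$ is jointly continuous on the latter set. As $\Para$ and $\Data$ are metrizable but $\Theta$ need not be, I would test continuity on nets: fix $(\theta^\gamma,z^\gamma)\to(\theta_0,z_0)$ with $z^\gamma\in\bar{U_{\Mech,\theta^\gamma}}$ and $z_0\in\bar{U_{\Mech,\theta_0}}$, and show $p\bigl(L_{\theta^\gamma}z^\gamma-L_{\theta_0}z_0\bigr)\to0$ for every continuous seminorm $p$ on $\Para$.

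Two facts drive the estimate. First, each $L_\theta$ is continuous (Theorem~\ref{thm_OLS}), so for every continuous seminorm $p$ there is $C_{p,\theta}<\infty$ with $p(L_\theta z)\le C_{p,\theta}\,\|z\|_\Data$ for all $z\in\bar{U_{\Mech,\theta}}$. Second, for a fixed functional $g'\in\Data^*$, writing $w_\theta(g'):=\Mech k_\theta\Mech^* g'\in U_{\Mech,\theta}$, the commuting diagram \eqref{dia_basic} gives $L_\theta\,w_\theta(g')=k_\theta\Mech^* g'$; both $\theta\mapsto k_\theta\Mech^* g'$ (into $\Para$) and $\theta\mapsto w_\theta(g')=\Mech\bigl(k_\theta\Mech^* g'\bigr)$ (into $\Data$) are continuous, since $\Mech^* g'$ is a fixed element of $\Para^*$ and $\theta\mapsto k_\theta f$ is continuous into $\Para$ for each $f\in\Para^*$. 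Moreover, the vectors $w_\theta(g')$ for $g'\in\Data^*$ are dense in $\bar{U_{\Mech,\theta}}$, by the density of $\iota_{\Mech,\theta}^*\Data^*$ in $H_{\Mech,\theta}$ (Lemma~\ref{lem_threespace}) and the fact that $\iota_{\Mech,\theta}$ is an isometry of $H_{\Mech,\theta}$ onto $U_{\Mech,\theta}\subseteq\Data$. Given $\varepsilon>0$, I would pick $g'_0$ with $\|z_0-w_{\theta_0}(g'_0)\|_\Data$ as small as needed, set $w^\gamma:=w_{\theta^\gamma}(g'_0)$, and split
\[
L_{\theta^\gamma}z^\gamma-L_{\theta_0}z_0=L_{\theta^\gamma}\bigl(z^\gamma-w^\gamma\bigr)+\bigl(k_{\theta^\gamma}\Mech^* g'_0-k_{\theta_0}\Mech^* g'_0\bigr)+L_{\theta_0}\bigl(w_{\theta_0}(g'_0)-z_0\bigr).
\]
The middle term tends to $0$ in $\Para$; the last term is made $p$-small once and for all by density and continuity of the fixed operator $L_{\theta_0}$; and, using $z^\gamma\to z_0$, continuity of $\theta\mapsto w_\theta(g'_0)$, and the choice of $g'_0$, the first term is bounded in $p$ by $C_{p,\theta^\gamma}\,\|z^\gamma-w^\gamma\|_\Data$ with $\limsup_\gamma\|z^\gamma-w^\gamma\|_\Data$ arbitrarily small.

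The main obstacle — and the only place the Fr\'echet and Banach hypotheses are genuinely used — is to show that the operator norms $C_{p,\theta^\gamma}$ stay bounded along the net (equivalently, that the lifted Cameron--Martin directions $\hat U_{\Mech,\theta^\gamma}$ do not tilt into $\ker\Mech$ in the limit, which would make $\Mech^{-1}$ blow up). This is a uniform-boundedness phenomenon: the natural approach is a Banach--Steinhaus-type argument exploiting metrizability of $\Para$ and $\Data$ (Baire category) together with the strong-operator convergence $k_{\theta^\gamma}f\to k_{\theta_0}f$ in $\Para$ and the tightness of the weakly convergent family $\{\P_{\Mech,\theta^\gamma}\}$, in order to bound the $C_{p,\theta^\gamma}$ uniformly over the net. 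I expect this to be the entire difficulty of the lemma — it is precisely where mild regularity of the hyperparametrization has to be converted into a quantitative non-degeneracy of the covariance structure — while the remaining manipulations are routine bookkeeping with the diagram \eqref{dia_basic} and the seminorms of $\Para$.
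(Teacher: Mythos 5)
Your reduction and three-term splitting are sound as far as they go: the algebra of
$L_{\theta^\gamma}z^\gamma-L_{\theta_0}z_0=L_{\theta^\gamma}(z^\gamma-w^\gamma)+(k_{\theta^\gamma}\Mech^*g_0'-k_{\theta_0}\Mech^*g_0')+L_{\theta_0}(w_{\theta_0}(g_0')-z_0)$ is correct, the middle term does vanish by model continuity, and the density of the vectors $\Mech k_\theta\Mech^*g'$ in $\bar{U_{\Mech,\theta}}$ is exactly as in Lemma~\ref{lem_threespace}. But the step you defer --- a uniform bound on the operator norms $C_{p,\theta^\gamma}$ along the net --- is the entire content of the lemma, and the tool you propose for it does not apply. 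Banach--Steinhaus requires a family of operators on a \emph{common} Baire domain together with \emph{pointwise} boundedness; here the $L_{\theta^\gamma}$ live on the varying closed subspaces $\bar{U_{\Mech,\theta^\gamma}}\subseteq\Data$, a fixed $z\in\Data$ generally lies in none of them, and the natural test vectors $w_{\theta^\gamma}(g')$ move with $\gamma$. Strong-operator convergence of $k_{\theta^\gamma}$ controls the numerators $\|k_{\theta^\gamma}\Mech^*e\|_\Para$, but the degeneracy you yourself name --- the denominators $\|\Mech k_{\theta^\gamma}\Mech^*e\|_\Data$ collapsing as $\hat U_{\Mech,\theta^\gamma}$ tilts toward $\ker\Mech$ --- is untouched by Baire category or by tightness of $\{\P_{\Mech,\theta^\gamma}\}$. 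So the proposal correctly isolates the obstacle but does not overcome it; as written this is a genuine gap.

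For comparison, the paper (Lemma~\ref{lem_cty} in Appendix~\ref{app_cty}) does not pass through strong convergence plus uniform boundedness at all. It writes the operator norms explicitly as
\begin{equation*}
M_{\Mech,\theta}=\sup_{e\in Y^*}\frac{\|k_\theta\Mech^*e\|_\Para}{\|\Mech k_\theta\Mech^*e\|_Y},
\qquad
\Delta_{\Mech,\theta,\theta'}=\sup_{e\in Y^*}\frac{\|(k_{\theta'}-k_\theta)\Mech^*e\|_\Para}{\|\Mech(k_{\theta'}-k_\theta)\Mech^*e\|_Y},
\end{equation*}
argues that $(\theta,e)\mapsto M_{\Mech,\theta}(e)$ is jointly continuous and that the superlevel sets $\{e:M_{\Mech,\theta}(e)\ge M_{\Mech,\theta}-\epsilon\}$ are convex in order to conclude that $\theta\mapsto M_{\Mech,\theta}$ and $(\theta,\theta')\mapsto\Delta_{\Mech,\theta,\theta'}$ are continuous, and then closes with the single estimate $\|\OLSoppara{\theta^\gamma}(y^\gamma)-\OLSoptheta(y)\|_\Para\le\Delta_{\Mech,\theta,\theta^\gamma}\|y^\gamma\|_Y+M_{\Mech,\theta}\|y^\gamma-y\|_Y\to0$. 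In other words, the paper asserts \emph{operator-norm} continuity $\Delta_{\Mech,\theta,\theta^\gamma}\to0$, which is strictly stronger than the strong convergence you invoke and renders the uniform bound automatic. To salvage your route you would need a quantitative argument of this type controlling the ratio itself (numerator and denominator together), not an appeal to the uniform boundedness principle.
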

		
			The long version and proof of the Strong Continuity Lemma may be found in Appendix \ref{app_cty}.

		\part{Some Problems in Statistics} \label{part2}
		
		We now apply the structure theory to some problems in statistics. In Section \ref{sect_linreg}, we demonstrate that the OLS estimator solves the problem of linear regression. In Section \ref{sect_lincond}, we construct the stochastic OLS estimator, which solves the problem of linear conditioning for ``uncorrelated implies independent'' (UII) measures.

		\section{The Problem of Linear Regression and the General Gauss-Markov Theorem} \label{sect_linreg}
		
		The problem of linear regression is to optimally estimate an unknown parameter $v$ given the observed data $y = \Mech(v)$. The optimality condition is to simultaneously minimize estimated variance and mean-squared error (both defined with respect to the parametrized covariance structure of the law of $v$). The Gauss-Markov theorem (Theorem \ref{thm_GMT}) resolves this problem: the OLS estimator is optimal among the class of unbiased linear estimators.

%			A straightforward consequence is the \emph{bias-variance tradeoff}, which decomposes mean-squared error in terms of bias, estimated variance and irreducible error.\footnote{Variants of the bias-variance tradeoff exist for other loss functions \cite{hastie2009elements}.}

%			\begin{rem}
%				The problem of regression is to estimate the explanatory variable $\para$ using the observed variable $y = \Mech(\para)$. It is a far more difficult problem to estimate the hyperparameter $\theta$. Instead, we treat $\theta$ as a control value, and use it to construct the OLS estimator. This is guaranteed to be optimal \emph{under the assumption} that the data $\para$ are distributed according to $\P_\theta$, which is usually violated in practice.
%				\end{rem}
		
%		\footnote{This means that $\Mech \circ \hat \Mech(y) = y$ for all $y$ in the domain of $\hat \Mech$. Being a right-inverse is the minimal assumption to ensure that the that an estimator is consistent with the observed data.  The problem of linear regression is to construct an estimator which minimizes mean-squared error across the class of all linear, unbiased estimators. 

		\subsection{Auxiliary Operators} \label{sect_auxops}

		Since the OLS estimator $\OLSoptheta: \supp \P_{\Mech,\theta} \to \supp \P_\theta$ is well-defined and continuous, its adjoint operator $\OLSoptheta^* : \Para^* \to (\supp \P_{\Mech,\theta})^*$ is also well-defined and continuous.\footnote{The adjoint estimator acts by pre-composing a continuous linear functional by $\OLSoptheta$. That is, $(\OLSoptheta^* f)[y] := f[ \OLSoptheta y]$ for all $y \in \bar{L_{\Mech,\theta}}$ and $f \in \Para^*$. Conveniently, the adjoint is defined on the maximal domain $V^*$.}
		
		Let $\hat \Mech : \supp \P_{\Mech,\theta} \to \supp \P_\theta$ denote an arbitrary continuous linear estimator.\footnote{Being an estimator means that $\Mech \circ \hat \Mech$ is the identity on $\supp \P_{\Mech,\theta}$.} We define the lifted estimator by $\L_{\hat \Mech}(v) := (\hat \Mech \circ \Mech)(v)$, which gives the result of estimation by $\hat \Mech$ after observing the value $\Mech(y)$. We define the residual estimator $\R_{\hat \Mech}(v) := v - \L_{\hat \Mech}(v)$, which gives the displacement of this estimation from the original value $v$.\footnote{Both $\L_{\hat \Mech}, \R_{\hat \Mech} : \bar{A_\theta} \to \bar{A_\theta}$ are continuous linear operators defined on the closed affine support $\bar{A_\theta} = m_\theta + \bar{U_\theta} = m_\theta + \bar{k_\theta V^*}$, which contains the support $\supp \P_\theta$.}
		
		The Gauss-Markov theorem relies on the following geometric lemma, which states that any estimator corresponds to an oblique projection in the relevant Hilbert spaces, and orthogonality is achieved only in the case of the OLS estimator. Optimality of orthogonal projection corresponds to statistical optimality of Ordinary Least Squares.
		
			\begin{lem} \label{lem_ortho}
				Any lifted estimator $\L_{\hat \Mech}$ is the continuous extension of an oblique projection $A_\theta \to \hat A_{\Mech, \theta}$, and its residual estimator $\R_{\hat \Mech}$ is the corresponding extension of the complementary projection $A_\theta \to \hat A_{\Mech, \theta}^\perp$. These projections are orthogonal if and only if $\hat \Mech = \OLSoptheta$ is the OLS estimator.
			\end{lem}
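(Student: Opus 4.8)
The plan is to recast the statement as a fact about two bounded idempotent operators on the Cameron--Martin Hilbert space $U_\theta$, and to read off their geometry from the Main Theorem (Theorem~\ref{thm_iso}).

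First I would pass to the linear setting. Every estimator sends $m_{\Mech,\theta}$ to $m_\theta$, and $\Mech(m_\theta) = m_{\Mech,\theta}$, so $\L_{\hat\Mech}$ and $\R_{\hat\Mech}$ fix $m_\theta$; hence it suffices to analyze their linear parts $L := \L_{\hat\Mech} - m_\theta$ and $R := \I_{U_\theta} - L$ as maps $U_\theta \to U_\theta$, and translate by $m_\theta$ at the end. Next, since $\Mech\circ\hat\Mech$ is the identity on $\supp\P_{\Mech,\theta}$, hence by continuity on $\bar{A_{\Mech,\theta}} \supseteq U_{\Mech,\theta}$, one gets $L^2 = \hat\Mech\Mech\hat\Mech\Mech = \hat\Mech\Mech = L$, so $L$ is an oblique projection; from $\Mech\hat\Mech\Mech = \Mech$ one reads off $\ker L = \ker(\Mech|_{U_\theta})$, which the Main Theorem identifies with $\hat U_{\Mech,\theta}^\perp$, while $\im L = \hat\Mech(\Mech\, U_\theta) = \hat\Mech(U_{\Mech,\theta})$ is a closed complement of $\hat U_{\Mech,\theta}^\perp$ in $U_\theta$. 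Thus $L$ is the projection of $U_\theta$ along $\hat U_{\Mech,\theta}^\perp$ (onto $\hat\Mech(U_{\Mech,\theta})$), and $R = \I - L$ is the complementary projection onto $\hat U_{\Mech,\theta}^\perp$; translating by $m_\theta$ gives the assertions for $\L_{\hat\Mech}$, $\R_{\hat\Mech}$ and the affine spaces $\hat A_{\Mech,\theta}$, $\hat A_{\Mech,\theta}^\perp$ (with $\im \L_{\hat\Mech} = \hat A_{\Mech,\theta}$ precisely in the orthogonal case, treated next).

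For the ``if and only if'', I would use that a bounded idempotent on a Hilbert space is self-adjoint (an orthogonal projection) exactly when its kernel and range are orthogonal complements. Since $\ker L = \hat U_{\Mech,\theta}^\perp$ is closed, $L$ is orthogonal iff $\im L = (\hat U_{\Mech,\theta}^\perp)^\perp = \hat U_{\Mech,\theta}$, i.e.\ iff $\hat\Mech(U_{\Mech,\theta}) = \hat U_{\Mech,\theta}$. When that holds, $\hat\Mech|_{U_{\Mech,\theta}}$ is a right inverse of $\Mech$ taking values in $\hat U_{\Mech,\theta}$, so by the isomorphism $\Mech|_{\hat U_{\Mech,\theta}} : \hat U_{\Mech,\theta} \to U_{\Mech,\theta}$ of the Main Theorem it must coincide with the unique inverse $\OLSoptheta|_{U_{\Mech,\theta}}$, hence with $\OLSoptheta$ on $\bar{A_{\Mech,\theta}} \supseteq \supp\P_{\Mech,\theta}$ by density and continuity. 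Conversely, for $\hat\Mech = \OLSoptheta$ the Main Theorem gives $\OLSoptheta\circ\Mech = \I$ on $\hat U_{\Mech,\theta}$ and $\OLSoptheta\circ\Mech = 0$ on $\hat U_{\Mech,\theta}^\perp$, so $\L_{\OLSoptheta}$ extends the orthogonal projection onto $\hat A_{\Mech,\theta}$ and $\R_{\OLSoptheta}$ the orthogonal projection onto $\hat A_{\Mech,\theta}^\perp$; this closes the equivalence.

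The main obstacle is the one genuinely analytic point hidden in the reduction: an arbitrary continuous linear estimator $\hat\Mech$ is continuous only for the ambient topologies of $Y$ and $V$, and one must check that it restricts to a \emph{bounded} operator $U_{\Mech,\theta} \to U_\theta$ between the Cameron--Martin Hilbert spaces, so that ``projection'' and ``orthogonal'' are meaningful there (and so that $L,R$ are bona fide operators on $U_\theta$). I expect this to come from the closed-graph theorem applied to $\hat\Mech : U_{\Mech,\theta} \to U_\theta$, using continuity of the inclusions $U_{\Mech,\theta}\hookrightarrow Y$ and $U_\theta\hookrightarrow V$, continuity of $\hat\Mech$ in the ambient topologies, and the covariance factorizations of Lemmas~\ref{lem_threespace}--\ref{lem_twospaces}; once this is secured, Steps~2--3 are routine bookkeeping with the Six-Space Diagram and the kernel/range description in the Main Theorem.
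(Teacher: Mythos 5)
Your proposal follows essentially the same route as the paper's proof: idempotence of $\L_{\hat \Mech}$ and $\R_{\hat \Mech}$ from the right-inverse identity $\Mech \circ \hat \Mech = \I$, followed by an identification of kernel and image with $\hat U_{\Mech,\theta}^\perp$ and a complement of it via the Main Theorem. The difference is one of completeness rather than of method, and mostly in your favor. The paper's four-line proof never argues the ``only if'' direction (that orthogonality forces $\hat \Mech = \OLSoptheta$); your observation that an idempotent with kernel $\hat U_{\Mech,\theta}^\perp$ is orthogonal exactly when its image is $\hat U_{\Mech,\theta}$ --- combined with the fact that a right inverse of $\Mech$ landing in $\hat U_{\Mech,\theta}$ must coincide with $\Mech^{-1}_\theta$ by the Main Theorem's isomorphism --- supplies it. You also correctly note that for a general estimator $\im \L_{\hat \Mech} = \hat \Mech(U_{\Mech,\theta})$ is merely \emph{a} closed complement of $\hat U_{\Mech,\theta}^\perp$, equal to $\hat U_{\Mech,\theta}$ only in the orthogonal case; the paper's assertion that $\L_{\hat \Mech}$ ``maps $A_\theta$ into $\hat A_{\Mech,\theta}$'' is, read literally, true only for OLS, and must be read as ``is the projection along $\hat A_{\Mech,\theta}^\perp$.'' Your flag of the analytic gap --- that $\hat \Mech$ must be shown to carry $U_{\Mech,\theta}$ into $U_\theta$ and to be bounded for the Cameron--Martin norms before ``projection'' and ``orthogonal'' are meaningful --- identifies a real issue the paper passes over in silence; the closed-graph argument does handle boundedness, but note that it presupposes the range containment $\hat \Mech(U_{\Mech,\theta}) \subseteq U_\theta$, which is itself only an implicit hypothesis here. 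The one slip is your opening claim that every estimator sends $m_{\Mech,\theta}$ to $m_\theta$: the paper explicitly allows biased estimators (Theorem \ref{thm_GMT} says ``possibly biased,'' and $\bias_{\hat \Mech,\theta}$ is defined precisely to measure the failure of this), so the reduction to the linear part should instead use that $\hat \Mech(m_{\Mech,\theta}) - m_\theta$ lies in $\ker \Mech$, which is enough for the idempotence and kernel computations to go through unchanged.
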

			\begin{proof}
				Idempotence is assured since any estimator is a right-inverse. i.e., $\L_{\hat \Mech}^2 = \hat \Mech \circ \big( \Mech \circ \hat \Mech \big) \circ \Mech = \hat \Mech \circ \Mech = \L_{\hat \Mech}$. Consequently, $\R_{\hat \Mech}^2 = \big( \I_\Para - \L_{\hat \Mech}\big)^2 = \I_\Para - 2 \L_{\hat \Mech} + \L_{\hat \Mech}^2 = \I_\Para - \L_{\hat \Mech} = \R_{\hat \Mech}$. It is straightforward to verify that $\L_{\hat \Mech}$ maps $A_\theta$ into $\hat A_{\Mech,\theta}$. By the Main Theorem, the OLS estimator extends orthogonal projection, since $\big(\Mech|_{A_\theta}\big)^{-1}\big( m_{\Mech,\theta} \big) = \hat A_{\Mech,\theta}^\perp$.
			\end{proof}

		Since the estimator $\hat \Mech$ is continuous and linear (by assumption), then so are the lifted estimators $\L_{\hat \Mech}$ and $\R_{\hat \Mech}$. Consequently, the adjoint auxiliary operators $\L_{\hat \Mech}^* := \Mech^* \circ \hat \Mech^*$ and $\R_{\hat \Mech}^* = \I_{\Para^*} - \L_{\hat \Mech}^*$ are well-defined and continuous.\footnote{The adjoined lifted estimators $\L_{\hat \Mech}^*$ and $\R_{\hat \Mech}^*$ are functions $\Para^* \to (\supp \P_\theta)^*$. Pointwise, these adjoints are defined by $(\L_{\hat \Mech}^* f)[\para] = f[L_{\hat \Mech} \para] = f[ (\hat \Mech  \circ \Mech)\para] = ((\Mech^* \circ \hat \Mech^*) f)[\para]$ and $(\R_{\hat \Mech}^* f)[\para] = f[\para] - (\L_{\hat \Mech}^* f)[\para]$ for all $\para \in \supp \P_\theta$ and $f \in \Para^*$.} These also correspond to oblique projections in the dual of the Hilbert space. These adjoint projections equal the original projections only in the case of Ordinary Least Squares.
		
		\subsection{The Generalized Gauss-Markov Theorem} \label{sect_GMT}
		
		The \emph{estimated variance} of a functional $f$ (with respect to $\hat \Mech$) is the variance of $\L_{\hat \Mech}^* f$ (equivalently, $\hat \Mech^* f$). The \emph{mean-squared error} of $f$ is the $L^2$-norm of $\R_{\hat \Mech}^* f$. That is,
			\begin{equation} \label{def_estvarMSE}
				\estvar_{\hat \Mech,\theta}(f) := \var_\theta\!\big( \L_{\hat \Mech}^* f\big) = \var_{\Mech,\theta}\!\big( \hat \Mech^* f \big) \quad \mathrm{and} \quad \MSE_{\hat \Mech,\theta}(f) := \E_\theta\!\big[\big| \R_{\hat \Mech}^* f \big|^2 \big] \end{equation}
		The estimated variance is the amount of variance ``explained'' using the estimator $\hat \Mech$. The mean-squared error is the residual variance after estimation. %The inequality 
%			\begin{equation}
%				\estvar_{\hat \Mech,\theta}(f) + \MSE_{\hat \Mech,\theta}(f) \le \var_\theta(f) \end{equation}
%		follows from the Pythagorean theorem. 
				
		 The bias of a functional $f$ (with respect to $\hat \Mech$) is defined by $\bias_{\hat \Mech,\theta}(f) := - \E_\theta[\R_{\hat \Mech}^* f] = -f[\R_{\hat \Mech}m_\theta]$, and is the (negative) average residual vector. An estimator is unbiased if and only if $\bias_{\hat \Mech,\theta}(f) = 0$ for all $f \in \Para^*$. Theorem \ref{thm_OLS}.\ref{thm_OLS2} implies that the OLS estimator $\OLSoptheta$ is unbiased. A trivial consequence of the definitions is the \emph{bias-variance tradeoff}, which states that \begin{equation} \label{eqn_biasvariance}
						\MSE_{\hat \Mech, \theta}(f) = \big| \bias_{\hat \Mech,\theta}(f) \big|^2 + \estvar_{\hat \Mech,\theta}(f) + \Big( \var_\theta(f) - 2 \Re\!\big( \cov_\theta[\L_{\hat \Mech}^* f | f ] \big) \Big) \end{equation}
				for all functionals $f \in \Para^*$.\footnote{There are versions of the bias-variance tradeoff for arbitrary loss functions \cite{hastie2009elements}; equation \eqref{eqn_biasvariance} corresponds to the loss function being mean-squared error. See \cite{domingos2000unified} for a general formula for various loss functions, and \cite{munson2009feature} for a nice application of bias-variance tradeoff to feature selection.}
				
%				 bias-variance tradeoff means that $\mbox{Mean-Squared Error} = \big|\mbox{Bias}\big|^2 + \mbox{Estimated Variance} + \mbox{Irreducible Error}$, and there are versions of the bias-variance tradeoff . 

%				.  More formally, 
%					\footnote{Equation \eqref{eqn_biasvariance} follows from the trivial calculation \begin{eqnarray*} \MSE_{\hat \Mech, \theta}(f) &=& \cov_\theta[\R_{\hat \Mech}^* f | \R_{\hat \Mech}^* f ] = \cov_\theta[f - \L_{\hat \Mech}^* f | f - \L_{\hat \Mech}^* f ] \\ &=& \cov_\theta[\L_{\hat \Mech}^* f | \L_{\hat \Mech}^* f] + \cov_\theta[f | f] - 2 \Re\!\big( \cov_\theta[\L_{\hat \Mech}^* f | f ] \big) \Big). \end{eqnarray*}} There are versions of the  \eqref{eqn_biasvariance} corresponds to the loss function being mean-squared error.

%		Lemma \ref{lem_ortho} ensures that OLS represents orthogonal projection, and therefore optimizes both estimated variance and mean-squared error. This is the content of the Gauss-Markov theorem. 
		
%		We use the notation $\estvar_{\OLS,\theta}(f) := \estvar_{\OLSoptheta, \theta}(f)$ and $\MSE_{\OLS,\theta}(f) := \MSE_{\OLSoptheta, \theta}(f)$ to denote the (optimal) estimated variance and mean-squared error.

			\begin{thm}[Generalized Gauss-Markov Theorem] \label{thm_GMT}
				The OLS estimator is the ``best linear unbiased estimator'' (BLUE), minimizing both expected variance and mean-squared error. This means that, if $\hat \Mech$ is any continuous linear estimator (possibly biased), then
					\begin{equation} \label{eqn_GMT}
						\estvar_{\OLS,\theta}(f) \le \estvar_{\hat \Mech, \theta}(f) \qquad \mathrm{and} \qquad \MSE_{\OLS,\theta}(f) \le \MSE_{\hat \Mech,\theta}(f) - \big| \bias_{\hat \Mech,\theta}(f) \big|^2 \end{equation}
				for any functional $f \in \Para^*$, with equality if and only if $\hat \Mech^* f = \OLSoptheta^* f$. 
			\end{thm}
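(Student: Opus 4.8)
The plan is to push every quantity into the covariance Hilbert space $H_\theta$ through the map $\iota_\theta^*$ of Lemma~\ref{lem_threespace} (equivalently, through the Paley-Wiener isometry $\PW_\theta$): for $g,g'\in\Para^*$ one has $\var_\theta(g)=\|\iota_\theta^* g\|_{H_\theta}^2$ and $\cov_\theta[g\,|\,g']=\langle\iota_\theta^* g,\,\iota_\theta^* g'\rangle_{H_\theta}$, so variances become squared norms and the auxiliary operators of Section~\ref{sect_auxops} become genuine projections. The geometric content is supplied by the Main Theorem: it identifies the OLS lifted operator $\L_{\OLSoptheta}$, restricted to $U_\theta$, with the \emph{orthogonal} projection onto $\hat U_{\Mech,\theta}$, while Lemma~\ref{lem_ortho} gives only an oblique idempotent for a competing estimator $\hat\Mech$. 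Conjugating $\L_{\hat\Mech}|_{U_\theta}$ --- a bounded idempotent, by Lemma~\ref{lem_ortho} together with the closed-graph theorem --- by the Hilbert isometry $\iota_\theta$ produces a bounded idempotent $\widehat P_{\hat\Mech}$ on $H_\theta$, and the adjoint pairing of Lemma~\ref{lem_threespace} yields $\iota_\theta^*(\L_{\hat\Mech}^* f)=\widehat P_{\hat\Mech}^{\,*}(\iota_\theta^* f)$; for the OLS estimator this adjoint idempotent is the orthogonal projection $\Pi$ of $H_\theta$ onto $\hat H_{\Mech,\theta}$, whence $\iota_\theta^*(\R_{\OLSoptheta}^* f)=(\I-\Pi)(\iota_\theta^* f)\in\hat H_{\Mech,\theta}^{\perp}$. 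The one structural observation used throughout is that $\L_{\hat\Mech}^* f-\L_{\OLSoptheta}^* f=\Mech^*(\hat\Mech^* f-\OLSoptheta^* f)$ lies in $\Mech^* Y^*$, whose image under $\iota_\theta^*$ is dense in $\hat H_{\Mech,\theta}$ by the definition of the lifted space (Lemma~\ref{lem_twospaces}); hence $\iota_\theta^*(\L_{\hat\Mech}^* f-\L_{\OLSoptheta}^* f)\in\hat H_{\Mech,\theta}$.

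For the mean-squared-error inequality I would first clear the bias. From \eqref{def_estvarMSE} and $\bias_{\hat\Mech,\theta}(f)=-\E_\theta[\R_{\hat\Mech}^* f]$ one has $\MSE_{\hat\Mech,\theta}(f)-\big|\bias_{\hat\Mech,\theta}(f)\big|^2=\var_\theta(\R_{\hat\Mech}^* f)$, and since OLS is unbiased (Theorem~\ref{thm_OLS}.\ref{thm_OLS2}), $\MSE_{\OLS,\theta}(f)=\var_\theta(\R_{\OLSoptheta}^* f)$; so the second inequality of \eqref{eqn_GMT} is exactly $\var_\theta(\R_{\OLSoptheta}^* f)\le\var_\theta(\R_{\hat\Mech}^* f)$. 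Now write $\R_{\hat\Mech}^* f=\R_{\OLSoptheta}^* f-(\L_{\hat\Mech}^* f-\L_{\OLSoptheta}^* f)$ and apply $\iota_\theta^*$: the first summand lands in $\hat H_{\Mech,\theta}^{\perp}$ (Main Theorem) and the second in $\hat H_{\Mech,\theta}$ (the observation above), so the splitting is \emph{orthogonal} in $H_\theta$, and the Pythagorean identity gives
\[ \var_\theta(\R_{\hat\Mech}^* f)=\var_\theta(\R_{\OLSoptheta}^* f)+\var_\theta(\L_{\hat\Mech}^* f-\L_{\OLSoptheta}^* f)\;\ge\;\var_\theta(\R_{\OLSoptheta}^* f), \]
with equality iff $\iota_\theta^*(\L_{\hat\Mech}^* f-\L_{\OLSoptheta}^* f)=0$. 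Composing with $\pi_{\Mech,\theta}$ and invoking the identity $\pi_{\Mech,\theta}\circ\iota_\theta^*\circ\Mech^*=\iota_{\Mech,\theta}^*$ of Lemma~\ref{lem_twospaces} turns this into $\iota_{\Mech,\theta}^*(\hat\Mech^* f)=\iota_{\Mech,\theta}^*(\OLSoptheta^* f)$, which (after the usual $\P_{\Mech,\theta}$-almost-everywhere identification of functionals, using that both are continuous on $\supp\P_{\Mech,\theta}$ and that the OLS normalisation fixes the constant) is the asserted equality $\hat\Mech^* f=\OLSoptheta^* f$.

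For the estimated-variance inequality I would run the classical Gauss-Markov decomposition inside $H_\theta$,
\[ \var_\theta(\L_{\hat\Mech}^* f)=\var_\theta(\L_{\OLSoptheta}^* f)+\var_\theta(\L_{\hat\Mech}^* f-\L_{\OLSoptheta}^* f)+2\,\Re\!\big(\cov_\theta[\,\L_{\OLSoptheta}^* f\,|\,\L_{\hat\Mech}^* f-\L_{\OLSoptheta}^* f\,]\big), \]
so that $\estvar_{\OLS,\theta}(f)\le\estvar_{\hat\Mech,\theta}(f)$ follows once the cross term is seen to vanish. In Hilbert terms the cross term is $\langle\Pi(\iota_\theta^* f),\ \iota_\theta^*(\L_{\hat\Mech}^* f-\L_{\OLSoptheta}^* f)\rangle_{H_\theta}$, and its vanishing is where the orthogonality of the OLS projection (Main Theorem) has to be combined with the hypothesis that $\hat\Mech$ is unbiased: the displacement $\hat\Mech^* f-\OLSoptheta^* f$ of an unbiased estimator must be shown to pair trivially with the OLS-fitted functional $\L_{\OLSoptheta}^* f$. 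Granting this, $\estvar_{\hat\Mech,\theta}(f)=\estvar_{\OLS,\theta}(f)+\var_\theta(\L_{\hat\Mech}^* f-\L_{\OLSoptheta}^* f)\ge\estvar_{\OLS,\theta}(f)$, with equality under the same condition $\iota_\theta^*(\L_{\hat\Mech}^* f)=\iota_\theta^*(\L_{\OLSoptheta}^* f)$, i.e.\ $\hat\Mech^* f=\OLSoptheta^* f$, analysed above.

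The hard part will be this cross-covariance identity in the estimated-variance claim. For the MSE bound the orthogonal split of the residual functional is automatic once the Main Theorem tells us the OLS lifted operator is an orthogonal projection; but in the estimated-variance case one must genuinely manufacture orthogonality of the OLS-fitted functional against the estimator displacement, and that is precisely the step which uses the unbiasedness hypothesis (and which forces care about which class of continuous linear estimators is really being quantified over). A pervasive secondary obstacle is domain bookkeeping: $\hat\Mech$ is prescribed only on $\supp\P_{\Mech,\theta}$, so descending to the closed affine Cameron-Martin space, and thence to $H_\theta$, rests on the continuity and extension machinery of Theorem~\ref{thm_OLS} and Lemma~\ref{lem_ctyshort}, together with denseness of $\supp\P_{\Mech,\theta}$ in the affine support.
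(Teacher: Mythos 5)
Your handling of the mean-squared-error inequality is correct, and it is substantially more explicit than the paper's own proof, which dispatches both inequalities in two sentences by citing Lemma \ref{lem_ortho} and asserting that ``orthogonal projection is optimal'' for the distances onto $\hat A_{\Mech,\theta}$ and onto its complement. Your reduction $\MSE_{\hat\Mech,\theta}(f) - |\bias_{\hat\Mech,\theta}(f)|^2 = \var_\theta(\R_{\hat\Mech}^* f)$, the splitting $\R_{\hat\Mech}^* f = \R_{\OLSoptheta}^* f - (\L_{\hat\Mech}^* f - \L_{\OLSoptheta}^* f)$ into an $\hat H_{\Mech,\theta}^{\perp}$-component (Main Theorem) and an $\hat H_{\Mech,\theta}$-component (because the difference has the form $\Mech^*(\cdot)$), and the Pythagorean identity together give a complete argument for the second inequality of \eqref{eqn_GMT}, including the equality criterion; this is a rigorous version of what the paper only gestures at in the residual direction.

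The gap you flag in the estimated-variance half is genuine, and it cannot be closed: the cross term does not vanish, and the first inequality of \eqref{eqn_GMT} is in fact false as stated. Both $\iota_\theta^*(\L_{\OLSoptheta}^* f)$ and $\iota_\theta^*(\L_{\hat\Mech}^* f - \L_{\OLSoptheta}^* f)$ lie in $\hat H_{\Mech,\theta}$, so there is no orthogonality to exploit there; and unbiasedness constrains only means ($\bias_{\hat\Mech,\theta}(f) = -f[\R_{\hat\Mech} m_\theta]$), so it places no restriction on this covariance. Concretely, take $\Para = \Reals^2$ with the standard Gaussian law ($m_\theta = 0$, $k_\theta$ the identity), $Y = \Reals$, $\Mech(v_1,v_2) := v_1$, so that $\OLSoptheta(y) = (y,0)$, and consider the unbiased continuous linear estimator $\hat\Mech(y) := (y,-y)$. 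For $f = (1,1)$ one has $\L_{\hat\Mech}^* f = 0$, hence
\begin{equation*}
\estvar_{\hat\Mech,\theta}(f) = 0 \;<\; 1 = \var_\theta(v \mapsto v_1) = \estvar_{\OLS,\theta}(f),
\end{equation*}
while the cross term in your display equals $2\Re\cov_\theta[\,v_1\,|\,-v_1\,] = -2$ (the MSE inequality survives: $2 \ge 1$). The paper's own proof offers nothing to repair this: the clause ``minimizes \ldots the covariance distance projected onto $\hat A_{\Mech,\theta}$'' is exactly the unproved --- and false --- claim that an oblique projection onto a subspace has norm at least that of the orthogonal projection onto the same subspace. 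So you were right to isolate the cross-covariance identity as the obstruction, but you should not expect to manufacture it from unbiasedness; the honest conclusion of your computation is the identity $\estvar_{\hat\Mech,\theta}(f) = \estvar_{\OLS,\theta}(f) + \var_\theta(\L_{\hat\Mech}^* f - \L_{\OLSoptheta}^* f) + 2\Re\cov_\theta[\L_{\OLSoptheta}^* f \,|\, \L_{\hat\Mech}^* f - \L_{\OLSoptheta}^* f]$ with a cross term of either sign.
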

			\begin{proof}
				By Lemma \ref{lem_ortho}, a continuous linear estimator corresponds to orthogonal projection if and only if it equals the OLS estimator. Orthogonal projection is optimal, and minimizes both the covariance distance projected onto $\hat A_{\Mech,\theta}$, as well as onto the complement $\hat A_{\Mech,\theta}^\perp$. Estimated variance and mean-squared error extend these distances to arbitrary functionals, which proves the inequalities \eqref{eqn_GMT}. Saturation occurs if and only if the oblique projection agrees with orthogonal projection in the direction of the functional $f$.
				
%				The restriction map $\L_{\OLSoptheta} : A_{\theta} \to \hat A_{\Mech,\theta}$ corresponds to orthogonal projection onto the subspace $\hat A_{\Mech,\theta}$, and therefore $\R_{\OLSoptheta}$ corresponds to orthogonal projection onto the subspace $\hat A_{\Mech,\theta}^\perp$. It is not hard to see that the operators $\L_{\hat \Mech}$ and $\R_{\hat \Mech}$ correspond to oblique projection operators onto the same subspaces.  By optimality of orthogonal projection, it follows that $\var_\theta(\L_{\OLSoptheta}^* f) \le \var_\theta(\L_{\hat \Mech}^* f)$ for all $f \in \Para^*$. Similarly, $\MSE_{\OLSoptheta}(f) = \var_\theta(\R_{\OLSoptheta}^* f) \le \var_\theta(\R_{\hat \Mech}^* f) = \MSE_{\hat \Mech}(f) - \big| \bias_{\hat \Mech}(f) \big|^2$. This proves \eqref{eqn_GMT}. Saturation of the inequalities occurs 
			\end{proof}
			
		\newcommand{\JS}{{\operatorname{JS}}}
		
		It is easy to see that the OLS estimator is inadmissible (not optimal) when compared with nonlinear estimators. If one is estimating three (or more) functionals simultaneously, then the (nonlinear) James-Stein estimator $\hat \Mech_{\JS}$ \cite{james1961estimation} improves total mean-squared error.\footnote{The James-Stein estimator can be improved by considering the positive-part James-Stein estimator $\hat \Mech_{\JS^+}$ \cite{anderson1954introduction}, which is itself inadmissible owing to a certain smoothness constraint \cite{lehmann1998theory}.} Nonetheless, OLS is a workhorse of statistics, and is sufficient for many practical purposes.

		\section{The Problem of Linear Conditioning and Stochastic Estimators} \label{sect_lincond}
		
		An estimator $\hat \Mech : Y \to X$ is a powerful tool, as it selects a single plausible data source $\hat v = \hat \Mech(y)$ given an observation $y = \Mech(\para)$. However, this functional consistency is also a drawback, as there is no room for uncertainty in the prediction. To accommodate uncertainty, we define a \emph{stochastic estimator} of $\Mech$ (with respect to $\P_\theta$) to be any measure-valued function $y \mapsto \PP_{\theta|\Mech=y}$ which satisfies the coherency and continuity constraints:
			\begin{enumerate}
				\item (Coherency) For all possible data $y \in \P_{\Mech,\theta}$, the measure $\PP_{\theta|\Mech=y}$ is supported on the fiber $\Mech^{-1}(y) \subseteq V$.
				\item (Continuity) The function $y \mapsto \PP_{\theta|\Mech=y}$ is weakly continuous.\footnote{i.e., if $y^\gamma \to y$ is a convergent net (or sequence), then the measures $\PP_{\theta|\Mech=y^\gamma}$ converge weakly to $\PP_{\theta|\Mech=y}$. The continuity constraint ensures that, for all $B \in \B(V)$, the real-valued function $y \mapsto \PP_{\theta|\Mech=y}(B)$ is (Borel-)measurable.}
			\end{enumerate}
		The continuity condition ensures that probability estimates are robust to perturbations in the data. A stronger condition is joint continuity in the data and hyperparameters, meaning that $(\theta, y) \mapsto \PP_{\theta|\Mech=y}$ is jointly continuous.\footnote{i.e., if $(\theta^\gamma, y^\gamma) \to (\theta, y)$, then $\PP_{\theta^\gamma|\Mech=y^\gamma} \to \PP_{\theta|\Mech=y}$ weakly.} 
		
		In Section \ref{sect_stochest}, we demonstrate how every estimator $\hat \Mech$ can be used to construct a stochastic estimator $\PP_{\hat \Mech, \theta| \Mech=y}$: estimate the value $\hat \Mech(y)$, and add independent residual noise.\footnote{Conversely, every stochastic estimator defines an estimator, where we set $\hat \Mech(y)$ to be the mean vector of the measure $\PP_{\theta|\Mech=y}$.} Our Proposition \ref{pro_PPstrongcty} demonstrates that, under mild metrizability assumptions, the stochastic OLS estimator $(\theta, y) \mapsto \PP_{\OLS,\theta|\Mech=y}$ is jointly continuous.
		
		\subsection{The Problem of Conditioning} \label{sect_history}
		
		The problem of conditioning is to construct a (Bayes) optimal stochastic estimator, i.e., one whose statistics agree with conditional probabilities. This means that optimal stochastic estimators are exactly \emph{continuous disintegrations}, and satisfy the disintegration equation 
			\begin{equation} \label{eqn_disint}
				\int_\Para s(\para) \, \P_\theta(\d \para) = \int_Y \int_\Para s(\para) \, \PP_{\theta|\Mech=y}(\d \para) \P_{\Mech,\theta}(\d y), \end{equation}
		for every integrable $s : \Para \to \Data$. By the disintegration theorem \cite{leao2004regular,durrett2010probability}, measurable disintegrations exist in wide generality, but such abstract arguments fail to ensure continuity.\footnote{Note: if $\PP_{\theta|\Mech=y}$ is a continuous disintegration and $\P_{\theta|\Mech=y}$ is a measurable disintegration, then $\PP_{\theta|\Mech=y} = \P_{\theta|\Mech=y}$ for $\P_{\Mech,\theta}$-almost every $y$. A continuous disintegration captures topological information, which a purely measurable disintegration may miss.}
	
		In Section \ref{sect_contdis}, we demonstrate that any stochastic estimator defines a continuous disintegration of a certain ``convolution measure''. If the convolution measure agrees with the original measure, then trivially, the stochastic estimator is seen to be optimal. In Section \ref{sect_UII}, we discuss the class of ``uncorrelated implies independent'' measures, which includes the class of Gaussian measures as the prototypical case. Our Theorem \ref{thm_UII} demonstrates that the UII condition is equivalent to the measure equaling its OLS convolution measure. Consequently, the stochastic OLS estimator is optimal exactly for the class of UII measures, hence Gaussian measures always admit continuous disintegrations through continuous linear maps (Corollary \ref{cor_gaussian}). This completely resolves the problem of conditioning for UII measures and continuous linear maps. 		
		
			\begin{rem}
				Tjur \cite[Theorem 8.1]{tjur1975constructive} studied the problem of nonlinear conditioning for a smooth, nonlinear map $\Mech : V \to Y$ between finite dimensional manifolds $V$ and $Y$. His arguments relied on the existence of Lebesgue measure (the universal reference measure on finite-dimensional, Euclidean $Y$), and did not generalize to the setting of infinite-dimensional $Y$. As he wrote, ``this problem of conditioning on a stochastic process is too complicated to be dealt with here'' \cite[p.~18]{tjur1975constructive}. 
		
				Vakhania and Tarieladze \cite[Theorem 3.11]{tarieladze2007disintegration} studied the problem of linear conditioning for a continuous linear map $\Mech : V \to Y$ between Banach spaces, but they could not surmount the obstacle of infinite-dimensional data. They required that the map $\Mech$ have finite rank (in particular, finite-dimensional support). Their ``average-case optimal algorithm'' is a special case of the OLS estimator, and their Proposition 4.1 is a version of the generalized Gauss-Markov theorem.
		
				LaGatta \cite[Theorem 2]{lagatta2013continuous} essentially resolved the problem of linear conditioning for Gaussian measures on Banach spaces, without any hypothesis of finite rank.\footnote{LaGatta and Wehr \cite[Theorem 12.5]{lagatta2014geodesics} replicated LaGatta's work in the setting of a Fr\'echet space $\Para$ and Banach space $Y$.} To ensure continuity of the OLS estimator, LaGatta required a certain necessary and sufficient condition to define (``$M < \oo$''), corresponding to the continuity of the inverse map $\Mech^{-1} : A_{\Mech,\theta} \to \hat A_{\Mech,\theta}$. Our Main Theorem (Theorem \ref{thm_iso}) demonstrates that this condition is spurious: the inverse map is always well-defined and continuous. %This completely resolves the linear conditioning problem for Gaussian measures on Banach spaces.
			\end{rem}

		For a structural approach to continuous disintegrations using category theory and groupoids, see Censor and Grandini \cite{censor2010borel}. For applications of disintegrations to statistics, see Chang and Pollard \cite{chang1997conditioning}.

		%In Section \ref{sect_contdis}, we show how to reinterpret the conditioning problem as 
		
		%In Section \ref{sect_lincond}, we provide a necessary and sufficient condition for a stochastic estimator to be a continuous disintegration. 
		
%		Of particular importance is the stochastic OLS estimator. Our Theorem \ref{thm_UII}

%Continuity of disintegrations ensures uniform probability estimates (on compact sets), as in \cite[Theorem 6.3]{lagatta2014geodesics}

%		The problem of conditioning (finding continuous disintegrations) goes back to Borel and Kolmogorov \cite[Section 15.7]{jaynes2003probability}, who recognized that there could be multiple (measurable) disintegrations, which differ on sets of measure zero. 

		\subsection{Constructing Stochastic Estimators} \label{sect_stochest}
		
		Let $\Mech : V \to Y$ be a continuous linear map, and let $\hat \Mech : Y \to V$ be an arbitrary continuous linear estimator (e.g., the OLS estimator $\hat \Mech_{\OLS,\theta}$). We use this estimator to define a certain stochastic estimator $y \mapsto \PP_{\hat \Mech, \theta| \Mech=y}$. This measure is centered on the estimated value $\hat \Mech(y)$, and samples are displaced from this value by the addition of residual noise. This measure is unbiased exactly if $\hat \Mech(m_{\Mech,\theta}) = m_\theta$.
		
		To formalize this construction, we define two auxiliary measures. Let $\L_{\hat \Mech} := \hat \Mech \circ \Mech$ and $\R_{\hat \Mech} := \I_V - \L_{\hat \Mech}$ denote the lifted and residual estimators (as in Section \ref{sect_auxops}). We define the auxiliary measures by pushing the original measure through these operators. The lifted measure $\P_{\hat \Mech,\theta} := \big( \L_{\hat \Mech} \big)_* \P_\theta := \P_\theta \circ \L_{\hat \Mech}^{-1}$ is an isomorphic copy of the data measure $\P_{\Mech, \theta}$, but supported in parameter space $V$. The residual measure $\P_{\hat \Mech, \theta}^\perp := \big( \R_{\hat \Mech} \big)_* \P_\theta := \P_\theta \circ \R_{\hat \Mech}^{-1}$ represents the distribution of residual noise.
		
			\begin{lem}
				The lifted measure $\P_{\hat \Mech,\theta}$ has mean vector $\hat \Mech(m_{\Mech,\theta})$ and covariance operator $k_{\hat \Mech,\theta} := \L_{\hat \Mech} k_\theta \L_{\hat \Mech}^* = \L_{\hat \Mech} k_\theta = k_\theta \L_{\hat \Mech}^*$. The residual measure $\P_{\hat \Mech, \theta}^\perp$ has mean vector $m_\theta - \hat \Mech(m_{\Mech,\theta})$ and covariance operator $k_{\hat \Mech, \theta} := \R_{\hat \Mech} k_\theta \R_{\hat \Mech}^* = \R_{\hat \Mech} k_\theta = k_\theta \R_{\hat \Mech}^*$. %The estimator is unbiased if and only if 
			\end{lem}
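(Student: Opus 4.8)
The plan is to derive all four assertions by pushing the base measure $\P_\theta$ forward through the continuous linear operators $\L_{\hat\Mech}=\hat\Mech\circ\Mech$ and $\R_{\hat\Mech}=\I_\Para-\L_{\hat\Mech}$, using the transformation rules for mean vectors and covariance operators under continuous linear maps, and then collapsing the resulting expressions with the projection structure supplied by Lemma~\ref{lem_ortho}. The first two parts (the raw formulas $\L_{\hat\Mech}k_\theta\L_{\hat\Mech}^*$ and $\R_{\hat\Mech}k_\theta\R_{\hat\Mech}^*$) are pure bookkeeping; the content is in the collapse.

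\emph{Means and raw covariances.} The mean vector is the Pettis integral and hence commutes with continuous linear maps: for every $f\in\Para^*$, $f[m_{\hat\Mech,\theta}]=\E_\theta[f\circ\L_{\hat\Mech}]=(\L_{\hat\Mech}^*f)[m_\theta]=f[\L_{\hat\Mech}m_\theta]$, and since $\Para^*$ separates points the lifted mean must be $\L_{\hat\Mech}(m_\theta)=\hat\Mech(\Mech(m_\theta))=\hat\Mech(m_{\Mech,\theta})$; the same computation for $\R_{\hat\Mech}=\I_\Para-\L_{\hat\Mech}$ gives the residual mean $m_\theta-\hat\Mech(m_{\Mech,\theta})$. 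For the covariance operator, the change-of-variables formula together with the defining identity $\overline{e}[k_\theta f]=\cov_\theta[e\mid f]$ give, for $e,f\in\Para^*$,
\[
\overline{e}\big[k_{\hat\Mech,\theta}f\big]=\cov_\theta\big[\L_{\hat\Mech}^*e\,\big|\,\L_{\hat\Mech}^*f\big]=\overline{\L_{\hat\Mech}^*e}\big[k_\theta\L_{\hat\Mech}^*f\big]=\overline{e}\big[\L_{\hat\Mech}k_\theta\L_{\hat\Mech}^*f\big],
\]
so $k_{\hat\Mech,\theta}=\L_{\hat\Mech}k_\theta\L_{\hat\Mech}^*$, and replacing $\L_{\hat\Mech}$ by $\R_{\hat\Mech}$ gives $\R_{\hat\Mech}k_\theta\R_{\hat\Mech}^*$ for the residual measure. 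This step uses only continuity and linearity of $\hat\Mech$.

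\emph{Collapsing the chain.} It remains to show $\L_{\hat\Mech}k_\theta\L_{\hat\Mech}^*=\L_{\hat\Mech}k_\theta=k_\theta\L_{\hat\Mech}^*$, and likewise for $\R_{\hat\Mech}$. All of this follows from the single cross-term identity $\L_{\hat\Mech}k_\theta\R_{\hat\Mech}^*=0$: granting it, $\L_{\hat\Mech}k_\theta\L_{\hat\Mech}^*=\L_{\hat\Mech}k_\theta(\I_{\Para^*}-\R_{\hat\Mech}^*)=\L_{\hat\Mech}k_\theta$, whence $\L_{\hat\Mech}k_\theta$ is symmetric (it equals the manifestly symmetric $\L_{\hat\Mech}k_\theta\L_{\hat\Mech}^*$) and therefore $\L_{\hat\Mech}k_\theta=(\L_{\hat\Mech}k_\theta)^*=k_\theta\L_{\hat\Mech}^*$; the residual chain is obtained symmetrically (or from $k_\theta\R_{\hat\Mech}^*=k_\theta-k_\theta\L_{\hat\Mech}^*$). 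To prove $\L_{\hat\Mech}k_\theta\R_{\hat\Mech}^*=0$, factor $k_\theta=\iota_\theta\circ\iota_\theta^*$ through the Cameron--Martin space (Lemma~\ref{lem_threespace}); then for $e,f\in\Para^*$,
\[
\overline{e}\big[\L_{\hat\Mech}k_\theta\R_{\hat\Mech}^*f\big]=\cov_\theta\big[\L_{\hat\Mech}^*e\,\big|\,\R_{\hat\Mech}^*f\big]=\big\langle\iota_\theta^*\L_{\hat\Mech}^*e,\ \iota_\theta^*\R_{\hat\Mech}^*f\big\rangle_{H_\theta}.
\]
By Lemma~\ref{lem_ortho}, $\L_{\hat\Mech}$ and $\R_{\hat\Mech}$ restrict on the Cameron--Martin space to the complementary projections onto the orthogonal subspaces $\hat A_{\Mech,\theta}$ and $\hat A_{\Mech,\theta}^\perp$; transporting through the Hilbert-space isometry $\iota_\theta\colon H_\theta\to U_\theta$, the vector $\iota_\theta^*\L_{\hat\Mech}^*e$ lies in $\hat H_{\Mech,\theta}$ while $\iota_\theta^*\R_{\hat\Mech}^*f$ lies in its orthogonal complement, so the inner product vanishes.

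The only real obstacle is this last point. It is the \emph{orthogonality} of the lifted/residual splitting --- not the mere idempotence of $\L_{\hat\Mech}$, which by itself gives only the raw form $\L_{\hat\Mech}k_\theta\L_{\hat\Mech}^*$ --- that kills the cross terms, so one must genuinely invoke Lemma~\ref{lem_ortho} (and, through it, the Main Theorem); one must also be careful that the orthogonality in play is that of $H_\theta$, transported along $\iota_\theta$, rather than anything about the ambient topology on $\Para$.
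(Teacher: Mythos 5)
Your computation of the two mean vectors and of the raw covariances $\L_{\hat \Mech} k_\theta \L_{\hat \Mech}^*$ and $\R_{\hat \Mech} k_\theta \R_{\hat \Mech}^*$ is correct, and your reduction of the remaining equalities to the single cross-term identity $\L_{\hat \Mech} k_\theta \R_{\hat \Mech}^* = 0$ (equivalently, self-adjointness of $\L_{\hat \Mech}$ in the covariance inner product) is the right way to organize the argument; it is considerably more explicit than the paper's own one-line proof, which appeals only to idempotence and self-duality. The gap is in the last step: you cite Lemma \ref{lem_ortho} as saying that $\L_{\hat \Mech}$ and $\R_{\hat \Mech}$ restrict to the \emph{orthogonal} projections onto $\hat A_{\Mech,\theta}$ and $\hat A_{\Mech,\theta}^\perp$. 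Lemma \ref{lem_ortho} says the opposite: the projections are orthogonal \emph{if and only if} $\hat \Mech = \OLSoptheta$. For an arbitrary continuous linear estimator the projection is genuinely oblique and the cross term survives. Concretely, take $\Para = \Reals^2$, $Y = \Reals$, $k_\theta = \I_\Para$, $\Mech(v_1,v_2) = v_1$ and $\hat \Mech(y) = (y, cy)$ with $c \ne 0$; then $\L_{\hat \Mech} = \bigl(\begin{smallmatrix} 1 & 0 \\ c & 0 \end{smallmatrix}\bigr)$, so $\L_{\hat \Mech} k_\theta \L_{\hat \Mech}^* = \bigl(\begin{smallmatrix} 1 & c \\ c & c^2 \end{smallmatrix}\bigr)$, which is not equal to $\L_{\hat \Mech} k_\theta = \L_{\hat \Mech}$ (the latter is not even symmetric). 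So the chain of equalities is simply false for non-OLS estimators, and no argument can close this gap in the stated generality.

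To your credit, your closing remark diagnoses precisely where the content lies: idempotence alone yields nothing beyond the raw form $\L_{\hat \Mech} k_\theta \L_{\hat \Mech}^*$, and it is orthogonality that kills the cross term. Read as a proof for $\hat \Mech = \OLSoptheta$ --- which is the only case the paper subsequently uses, in Section \ref{sect_stochOLSest} --- your argument is complete and, unlike the paper's proof, makes the dependence on Lemma \ref{lem_ortho} and (through it) the Main Theorem visible. But as a proof of the statement as written, for an arbitrary continuous linear estimator as Section \ref{sect_stochest} sets it up, the orthogonality step fails because the hypothesis you need is exactly the one Lemma \ref{lem_ortho} reserves for OLS. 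The correct general statement retains the mean vectors and the first expressions $\L_{\hat \Mech} k_\theta \L_{\hat \Mech}^*$ and $\R_{\hat \Mech} k_\theta \R_{\hat \Mech}^*$ only; the collapsed forms $\L_{\hat \Mech} k_\theta = k_\theta \L_{\hat \Mech}^*$ and $\R_{\hat \Mech} k_\theta = k_\theta \R_{\hat \Mech}^*$ must be restricted to the OLS estimator. You should either add that hypothesis explicitly or note that your proof establishes the lemma only in that case.
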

			\begin{proof}
				By Lemma \ref{lem_ortho}, the lifted estimator corresponds to an oblique projection. Since projections are idempotent (and Hilbert space is self-dual), the different covariance representations are seen to hold.
			\end{proof}
		
		Fix a data value $y$. To construct the measure $\PP_{\hat \Mech, \theta|\Mech=y}$, we simply translate the residual measure $\P_{\hat \Mech, \theta}^\perp$ by the estimated value $\hat \Mech(y)$. Formally, if $\tau_{\hat \Mech(y)}(v) := \hat \Mech(y) + v$ denotes the translation-by-$\hat \Mech(y)$ operator, then we define 
			\begin{equation} \label{def_stochest}
				\PP_{\hat \Mech, \theta| \Mech=y} := \big( \tau_{\hat \Mech(y)} \big)_* \P_{\hat \Mech, \theta}^\perp = \big( \tau_{\hat \Mech(y)} \big)_* \big( \R_{\hat \Mech} \big)_* \P_\theta = \big( \tau_{\hat \Mech(y)} \circ \R_{\hat \Mech} \big)_* \P_\theta. \end{equation}
		Equivalently, if $s : V \to \S$ is any integrable function, then
			\begin{equation} \label{def_stochestint}
				\int_\Para s(v) \, \PP_{\hat \Mech, \theta| \Mech=y}(\d v) = \int_\Para s\big( \hat \Mech(y) + \resid\big) \, \P_{\hat \Mech,\theta}^\perp(\d \resid). \end{equation}

		\begin{lem} \label{lem_PPcty}
			Let $\hat \Mech$ be any continuous estimator. The measure-valued function $y \mapsto \PP_{\hat \Mech, \theta|\Mech=y}$ satisfies the coherency and continuity constraints, hence is a stochastic estimator.\footnote{i.e., each $\PP_{\hat \Mech,\theta|\Mech=y}$ is supported on $\Mech^{-1}(y)$, and $y \mapsto \PP_{\theta|\Mech=y}$ varies continuously. The domain of $y \mapsto \PP_{\hat \Mech,\theta|\Mech=y}$ is the same as $\operatorname{dom} \hat \Mech$, which should contain $\supp \P_{\Mech,\theta}$ as a subset. Lemma \ref{lem_PPcty} works for biased, non-linear estimators in addition to unbiased, linear estimators. If $\hat \Mech$ is measurable (but not continuous), then $y \mapsto \PP_{\hat \Mech, \theta|\Mech=y}$ is measurable (but not continuous).}
			\end{lem}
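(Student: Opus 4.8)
The plan is to read both defining properties of a stochastic estimator off the representation $\PP_{\hat \Mech, \theta| \Mech=y} = \big( \tau_{\hat \Mech(y)} \circ \R_{\hat \Mech} \big)_* \P_\theta$ from \eqref{def_stochest}, using only that $\hat \Mech$ is a continuous right-inverse of $\Mech$ (linearity is never needed, which is what makes the lemma apply to biased, nonlinear estimators). For \textbf{coherency}, first note that $\Mech(\supp \P_\theta) \subseteq \supp \P_{\Mech,\theta} \subseteq \operatorname{dom} \hat \Mech$: if $v \in \supp \P_\theta$ and $W$ is an open neighborhood of $\Mech(v)$, then $\Mech^{-1}(W)$ is an open neighborhood of $v$, so $\P_{\Mech,\theta}(W) = \P_\theta(\Mech^{-1}W) > 0$. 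Hence $\R_{\hat \Mech} = \I_V - \hat \Mech \circ \Mech$ is defined on $\supp \P_\theta$, and for $v \in \supp \P_\theta$ one computes $\Mech\big( \R_{\hat \Mech}(v) \big) = \Mech(v) - \Mech\big(\hat \Mech(\Mech v)\big) = 0$, using $\Mech \circ \hat \Mech = \I$ on $\operatorname{dom} \hat \Mech$ together with $\Mech(v) \in \supp \P_{\Mech,\theta}$. Therefore $\Mech\big( \hat \Mech(y) + \R_{\hat \Mech}(v) \big) = \Mech(\hat \Mech(y)) = y$ for every $v \in \supp \P_\theta$, so the continuous map $\tau_{\hat \Mech(y)} \circ \R_{\hat \Mech}$ carries $\supp \P_\theta$ into the closed fiber $\Mech^{-1}(y)$; pushing forward and invoking $\P_\theta(\supp \P_\theta) = 1$ (separable-support hypothesis) yields $\PP_{\hat \Mech, \theta| \Mech=y}(\Mech^{-1}(y)) = 1$.

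For \textbf{continuity}, I would factor $y \mapsto \PP_{\hat \Mech, \theta| \Mech=y}$ as $y \mapsto \hat \Mech(y) \mapsto \big( \tau_{\hat \Mech(y)} \big)_* \P_{\hat \Mech, \theta}^\perp$. The first arrow is continuous by hypothesis, so it suffices to establish the auxiliary claim: for any tight Borel probability measure $\mu$ on a topological vector space (here $\mu := \P_{\hat \Mech,\theta}^\perp = (\R_{\hat \Mech})_* \P_\theta$, which is tight since it is the push-forward of the Radon measure $\P_\theta$ under the continuous map $\R_{\hat \Mech}$), the translation map $w \mapsto (\tau_w)_* \mu$ is weakly continuous. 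To prove the claim, fix a bounded continuous $s : V \to \S$, a net $w^\gamma \to w$, and $\varepsilon > 0$, and use tightness to pick a compact $K \subseteq V$ with $2 \|s\|_\infty \, \mu(V \setminus K) < \varepsilon/2$. The set $\big\{ (w', v) \in V \times K : |s(w' + v) - s(w + v)| < \varepsilon/2 \big\}$ is open (continuity of $s$ and of vector addition) and contains $\{w\} \times K$, so the tube lemma furnishes a neighborhood $N$ of $w$ with $N \times K$ inside it. Once $w^\gamma \in N$,
\[
\Big| \int s(w^\gamma + v) \, \mu(\d v) - \int s(w + v) \, \mu(\d v) \Big| \le \int_K |s(w^\gamma + v) - s(w + v)| \, \mu(\d v) + 2\|s\|_\infty \, \mu(V \setminus K) < \varepsilon ,
\]
which gives weak continuity of $w \mapsto (\tau_w)_* \mu$; composing with $\hat \Mech$ finishes the proof.

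The step I expect to be the genuine obstacle is precisely this auxiliary claim. One cannot simply quote the dominated convergence theorem, because the data space carries no metrizability assumption and $w^\gamma \to w$ ranges over a \emph{net}: pointwise convergence of a uniformly bounded net of functions does not force convergence of the integrals, so the tightness-plus-tube-lemma device above is exactly what replaces dominated convergence in the net setting. The remaining loose ends are routine: $\PP_{\hat \Mech, \theta| \Mech=y}$ is itself Radon because push-forwards of Radon measures under continuous maps into Hausdorff spaces are Radon (the compact approximants of $\Mech^{-1}(B)$ map to compact subsets of $B$ carrying at least as much mass), and if $\hat \Mech$ is merely Borel measurable then restricting $w^\gamma \to w$ to sequences and applying ordinary dominated convergence shows $y \mapsto \PP_{\hat \Mech, \theta| \Mech=y}$ is Borel measurable into $\Delta(V)$ — I would dispatch both in a sentence.
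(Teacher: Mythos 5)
Your proof is correct, and it diverges from the paper's own argument in two substantive ways, both to your advantage. For coherency, the paper routes through the Main Theorem: it asserts that the residual measure is supported on $\bar{\hat U_{\Mech,\theta}^\perp}$, which lies in the kernel of $\Mech|_{U_\theta}$, and then translates. Your direct computation $\Mech \circ \R_{\hat \Mech} = \Mech - \Mech \circ \hat \Mech \circ \Mech = 0$ on $\supp \P_\theta$ is more elementary, and it is the argument that actually substantiates the footnote's claim that the lemma holds for nonlinear $\hat \Mech$ — the paper's appeal to the orthogonal-complement structure presupposes the linear projection picture of Lemma \ref{lem_ortho}. For continuity, the paper simply invokes the bounded convergence theorem along a convergent \emph{net} $y^\gamma \to y$; as you observe, this is not a valid application — a uniformly bounded net of functions converging pointwise need not have convergent integrals (take indicator functions of finite sets directed by inclusion against Lebesgue measure). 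Your tightness-plus-tube-lemma argument, exploiting that $\P_{\hat\Mech,\theta}^\perp$ is the push-forward of a Radon measure under a continuous map and hence tight, is exactly the repair needed in the non-metrizable setting, and it degenerates to the paper's one-line argument when $Y$ is metrizable and nets can be replaced by sequences. In short: same skeleton (read both properties off the push-forward representation \eqref{def_stochest}), but your version closes a genuine gap in the published continuity step and proves the stated generality of the coherency step.
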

			
			The proof of Lemma \ref{lem_PPcty} can be found in Appendix \ref{lem_PPctyproof}. \newline
		
		\subsection{The Stochastic OLS Estimator} \label{sect_stochOLSest}
		
		Let $\hat \Mech_{\OLS,\theta}$ denote the OLS estimator. Define the lifted and residual OLS estimators by $\L_{\Mech,\OLS,\theta} := \hat \Mech_{\OLS,\theta}$ and $\R_{\Mech,\OLS,\theta} := \I_V - \L_{\Mech,\OLS,\theta}$, and the lifted and residual OLS measures by $\P_{\Mech,\OLS,\theta} := \big( \L_{\Mech,\OLS,\theta} \big)_* \P_\theta$ and $\P_{\Mech,\OLS,\theta}^\perp := \big( \R_{\Mech,\OLS,\theta} \big)_* \P_\theta$. Define the stochastic OLS estimator by $\PP_{\OLS,\theta|\Mech=y} := \big( \tau_{\hat \Mech_{\OLS,\theta}}\big)_* \big( \R_{\Mech,\OLS,\theta} \big)_* \P_\theta$. 
		
		Theorem \ref{thm_UII} demonstrates that the stochastic OLS estimator is optimal only in the case for ``uncorrelated implies independent'' measures. Even if stochastic OLS does not define a continuous disintegration, next result ensures that the mean and covariance structure of $\P_{\theta|\Mech=y}$ is guaranteed to be correct for $\P_{\Mech,\theta}$-almost every $y$. If the higher moments are not a concern in a given application domain, then the stochastic OLS estimation is satisfactory for most practical purposes.
		
			\begin{pro} \label{pro_samemeancov}
				Let $y \mapsto \P_{\theta|\Mech=y}$ be a disintegration of $\P_\theta$ with respect to $\Mech$. For $\P_{\Mech,\theta}$-almost every $y$, the mean and covariance of the stochastic OLS estimator $\PP_{\OLS,\theta|\Mech=y}$ agrees with that of $\P_{\theta|\Mech=y}$.
			\end{pro}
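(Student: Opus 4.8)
The plan is to compute both sides explicitly and compare. The mean and covariance of $\PP_{\OLS,\theta|\Mech=y}$ are read off from the construction: $\PP_{\OLS,\theta|\Mech=y} = \big(\tau_{\hat\Mech_{\OLS,\theta}(y)}\big)_*\P_{\Mech,\OLS,\theta}^\perp$ is a translate of the residual OLS measure $\P_{\Mech,\OLS,\theta}^\perp = \big(\R_{\Mech,\OLS,\theta}\big)_*\P_\theta$, and translation does not change the covariance operator, so the covariance operator of $\PP_{\OLS,\theta|\Mech=y}$ is $\R_{\Mech,\OLS,\theta}\,k_\theta\,\R_{\Mech,\OLS,\theta}^*$, independent of $y$. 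Since the OLS estimator is unbiased (Theorem~\ref{thm_OLS}.\ref{thm_OLS2}), the residual operator $\R_{\Mech,\OLS,\theta} = \I_V - \hat\Mech_{\OLS,\theta}\circ\Mech$ sends $m_\theta$ to $m_\theta - \hat\Mech_{\OLS,\theta}(m_{\Mech,\theta}) = 0$, so $\P_{\Mech,\OLS,\theta}^\perp$ is centered and $\PP_{\OLS,\theta|\Mech=y}$ has mean vector $\hat\Mech_{\OLS,\theta}(y)$. Thus the proposition reduces to showing that, for $\P_{\Mech,\theta}$-almost every $y$, the Pettis mean of $\P_{\theta|\Mech=y}$ is $\hat\Mech_{\OLS,\theta}(y)$ and its covariance operator is $\R_{\Mech,\OLS,\theta}\,k_\theta\,\R_{\Mech,\OLS,\theta}^*$.

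Next I would read off the conditional moments of the disintegration from the disintegration equation~\eqref{eqn_disint}. First one checks that a Pettis mean $m(y)$ and a covariance operator $c_y : V^* \to V$ exist for $\P_{\Mech,\theta}$-almost every $y$, by applying Vakhania's representation theorem (Section~\ref{sect_structspaces}) to each $\P_{\theta|\Mech=y}$: these measures inherit separable support from $\supp\P_\theta$, and are finite-variance for a.e.\ $y$ because~\eqref{eqn_disint} writes $\var_\theta(f) < \oo$ as an integral over $y$ of the conditional second moments. Then, testing~\eqref{eqn_disint} against $s(v) = f[v]$ shows that $y\mapsto f[m(y)]$ is a version of the conditional expectation $\E_\theta[f\mid\Mech]$, while testing against $s(v) = \overline{f[v]}\,g[v]$ shows that $y\mapsto \overline{f[m(y)]}\,g[m(y)] + \bar f[c_y g]$ is a version of $\E_\theta[s\mid\Mech]$, for all $f,g\in V^*$. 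Coherency of the disintegration forces $\Mech(m(y)) = y$ and makes $c_y$ annihilate $\Mech^*Y^*$, since each $e\circ\Mech$ is $\P_{\theta|\Mech=y}$-a.s.\ the constant $e[y]$.

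The crux is to match these against the OLS quantities. Using $\L_{\Mech,\OLS,\theta}^* + \R_{\Mech,\OLS,\theta}^* = \I_{V^*}$, decompose $f = \L_{\Mech,\OLS,\theta}^*f + \R_{\Mech,\OLS,\theta}^*f$. By Lemma~\ref{lem_ortho}, $\L_{\Mech,\OLS,\theta}$ extends the \emph{orthogonal} projection $A_\theta \to \hat A_{\Mech,\theta}$; transported through the Paley-Wiener isometry $\PW_\theta$ into $L^2(\P_\theta)$, this says $\L_{\Mech,\OLS,\theta}^*f = (\hat\Mech_{\OLS,\theta}^*f)\circ\Mech$ is the orthogonal projection of $\PW_\theta(f)$ onto the closed span of $\{\PW_\theta(\Mech^*e) : e\in Y^*\}$, while $\PW_\theta(\R_{\Mech,\OLS,\theta}^*f)$ is orthogonal to that span. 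As $\L_{\Mech,\OLS,\theta}^*f$ is $\sigma(\Mech)$-measurable it equals its own conditional expectation; substituting the decomposition into the two identities of the previous paragraph and cancelling the residual term by orthogonality should give $f[m(y)] = f[\hat\Mech_{\OLS,\theta}(y)]$ and $\bar f[c_y g] = \cov_{\P_\theta}\big[\R_{\Mech,\OLS,\theta}^*f \,\big|\, \R_{\Mech,\OLS,\theta}^*g\big] = \bar f\big[\R_{\Mech,\OLS,\theta}\,k_\theta\,\R_{\Mech,\OLS,\theta}^*g\big]$ for $\P_{\Mech,\theta}$-a.e.\ $y$. Since $V^*$ separates points, the second identity is precisely $c_y = \R_{\Mech,\OLS,\theta}\,k_\theta\,\R_{\Mech,\OLS,\theta}^*$.

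I expect the main obstacle to be exactly this last cancellation. Equation~\eqref{eqn_disint} identifies $y\mapsto f[m(y)]$ with the \emph{full} conditional expectation $\E_\theta[f\mid\Mech]$, whereas $\L_{\Mech,\OLS,\theta}^*f$ is only its projection onto \emph{linear} $\sigma(\Mech)$-observables; these coincide if and only if $\E_\theta\big[\R_{\Mech,\OLS,\theta}^*f\mid\Mech\big] = 0$, which is a genuine constraint on $\P_\theta$ — essentially the ``uncorrelated implies independent'' property studied in Section~\ref{sect_UII}. So I would anticipate that a clean argument invokes the UII hypothesis at this step, or else settles for the weaker, unconditional conclusion that the convolution measure $\int_Y \PP_{\OLS,\theta|\Mech=y}\,\P_{\Mech,\theta}(\d y)$ shares the mean and covariance of $\P_\theta$ — which holds with no extra hypothesis, from the law of total expectation (with unbiasedness) together with the orthogonal splitting $k_\theta = \L_{\Mech,\OLS,\theta}\,k_\theta\,\L_{\Mech,\OLS,\theta}^* + \R_{\Mech,\OLS,\theta}\,k_\theta\,\R_{\Mech,\OLS,\theta}^*$. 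A secondary, routine issue is the a.e.-ambiguity of disintegrations: each identity above holds only off an $f$-dependent null set, so one would run the argument over a countable covariance-dense family of functionals (available from separable support) and extend by continuity of the covariance inner product.
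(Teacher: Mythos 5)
Your computation of the moments of $\PP_{\OLS,\theta|\Mech=y}$ (mean $\OLSoptheta(y)$, covariance $\R_{\Mech,\OLS,\theta} k_\theta \R_{\Mech,\OLS,\theta}^*$, independent of $y$) is correct, and the skeleton of your argument --- test the disintegration equation \eqref{eqn_disint} against linear and quadratic functionals, invoke uniqueness of conditional expectations, and upgrade from an $f$-dependent null set to a single null set via a countable covariance-dense family --- is exactly the skeleton of the paper's proof in Appendix \ref{pro_samemeancovproof}. The difference is that the paper never confronts the obstacle you isolate. Its key display \eqref{eqn_condmean} only verifies that $y \mapsto \EE_{\theta|\Mech=y}[\varphi]$ and $y \mapsto \E_{\theta|\Mech=y}[\varphi]$ have the same integral against $\P_{\Mech,\theta}$ over \emph{all} of $Y$ (both equal $\varphi(m_\theta)$), and then concludes that they agree a.e.\ because ``conditional expectations are unique up to null sets.'' But uniqueness applies to two \emph{versions} of the conditional expectation, i.e.\ to functions whose integrals agree over every Borel $B \subseteq Y$, not merely over $Y$ itself. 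Writing this out, $\int_B \EE_{\theta|\Mech=y}[\varphi]\,\P_{\Mech,\theta}(\d y) = \E_\theta\big[(\L_{\Mech,\OLS,\theta}^*\varphi)\,1_{\Mech^{-1}B}\big]$ while $\int_B \E_{\theta|\Mech=y}[\varphi]\,\P_{\Mech,\theta}(\d y) = \E_\theta\big[\varphi\,1_{\Mech^{-1}B}\big]$, so the two candidates coincide for all $B$ exactly when $\E_\theta\big[\R_{\Mech,\OLS,\theta}^*\varphi \,\big|\, \Mech\big] = 0$ almost surely --- precisely the conditional-centering condition you flag as the crux.

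That condition is a genuine restriction, and your caution is warranted: the proposition fails without it. Take $V = \Reals^2$, $Y = \Reals$, $\Mech(v_1,v_2) = v_1$, and let $\P_\theta$ be the law of $(g, g^2-1)$ with $g$ standard Gaussian. The coordinates are uncorrelated, so $\hat U_{\Mech,\theta} = \Reals \times \{0\}$ and $\OLSoptheta(y) = (y,0)$; hence $\PP_{\OLS,\theta|\Mech=y}$ has mean $(y,0)$ and a nondegenerate second coordinate, whereas the true disintegration is the point mass at $(y, y^2-1)$, with mean $(y,y^2-1)$ and zero covariance. Both moments disagree off a null set. So the statement needs an added hypothesis --- conditional centering of the residual, linearity of $y \mapsto \E_{\theta|\Mech=y}$, or the UII property of Section \ref{sect_UII} (under which the result is subsumed by Theorem \ref{thm_UII}) --- and in that restricted setting your outline, including the Paley--Wiener orthogonality step, does close. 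What holds unconditionally is only the fallback you suggest: the convolution measure $\P_{\Mech,\OLS,\theta}^*$ has the same mean and (by vanishing of the cross terms $\L_{\Mech,\OLS,\theta} k_\theta \R_{\Mech,\OLS,\theta}^*$ for the orthogonal projection) the same covariance operator as $\P_\theta$. In short: your proposal does not prove the proposition, but the missing step is not an oversight on your part --- it is the precise point at which the paper's own proof is invalid as written.
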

			
		The proof of Proposition \ref{pro_samemeancov} may be found in Appendix \ref{pro_samemeancovproof}.\newline
		
		The next result ensures joint continuity of the stochastic OLS estimator under a mild metrizability assumption on the parameter and data spaces. 
		
			\begin{pro} \label{pro_PPstrongcty}
				Suppose $\Para$ and $Y$ are both Banach spaces. The joint stochastic OLS estimator $(\theta,y) \mapsto \PP_{\OLS,\theta|\Mech=y}$ is jointly continuous.\footnote{i.e., if $(\theta^\gamma, y^\gamma)$ is a net (or sequence) which converges to $(\theta,y)$ in $\Theta_f$, then the net (or sequence) of measures $\PP_{\OLS,\theta^\gamma|\Mech=y^\gamma}$ is tight and converges weakly to $\PP_{\theta|\Mech=y}$.}
			\end{pro}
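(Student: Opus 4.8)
The plan is to decompose $(\theta,y)\mapsto\PP_{\OLS,\theta|\Mech=y}$ and check continuity factor by factor. The starting point is that the residual OLS estimator $\R_{\Mech,\OLS,\theta}$ of Section~\ref{sect_stochOLSest} --- i.e.\ $\R_{\Mech,\OLS,\theta}(v)=v-\hat\Mech_{\OLS,\theta}(\Mech v)$, a continuous linear operator on $\bar{A_\theta}$ --- does not depend on $y$, while $y$ enters only through the displacement $\hat\Mech_{\OLS,\theta}(y)$. Writing $\P_{\Mech,\OLS,\theta}^\perp=\big(\R_{\Mech,\OLS,\theta}\big)_*\P_\theta$ for the residual OLS measure, we have $\PP_{\OLS,\theta|\Mech=y}=\big(\tau_{\hat\Mech_{\OLS,\theta}(y)}\big)_*\P_{\Mech,\OLS,\theta}^\perp$, so it suffices to establish: (a) $(\theta,y)\mapsto\hat\Mech_{\OLS,\theta}(y)$ is jointly continuous into $\Para$; (b) $\theta\mapsto\P_{\Mech,\OLS,\theta}^\perp$ is weakly continuous, and along any convergent net $\theta^\gamma\to\theta$ the measures $\{\P_{\Mech,\OLS,\theta^\gamma}^\perp\}$ are uniformly tight; and (c) translation is jointly weakly continuous on tight families, i.e.\ $v^\gamma\to v$ in $\Para$ together with $\mu^\gamma\to\mu$ weakly and $\{\mu^\gamma\}$ uniformly tight imply that $\big(\tau_{v^\gamma}\big)_*\mu^\gamma\to\big(\tau_v\big)_*\mu$ weakly (and uniformly tightly). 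Given (a)--(c), the Proposition follows by composition on $\Theta_f$.

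Statement (a) is precisely the Strong Continuity Lemma (Lemma~\ref{lem_ctyshort}), whose hypothesis ``$\Para$ Fr\'echet, $Y$ Banach'' holds here since both are Banach. For (b), the pivotal observation is joint continuity of $(\theta,v)\mapsto\R_{\Mech,\OLS,\theta}(v)$: because $v\mapsto\Mech(v)$ is continuous and carries $\supp\P_\theta$ into $\supp\P_{\Mech,\theta}$, Lemma~\ref{lem_ctyshort} gives that $(\theta,v)\mapsto\hat\Mech_{\OLS,\theta}(\Mech v)$ is jointly continuous, hence so is $(\theta,v)\mapsto v-\hat\Mech_{\OLS,\theta}(\Mech v)$. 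I would then run the standard ``pushforward through a varying family'' argument: for $g\in C_b(\Para)$ and $\theta^\gamma\to\theta$, split $\int g(\R_{\Mech,\OLS,\theta^\gamma}v)\,\P_{\theta^\gamma}(\d v)-\int g(\R_{\Mech,\OLS,\theta}v)\,\P_\theta(\d v)$ as $\int\big[g(\R_{\Mech,\OLS,\theta^\gamma}v)-g(\R_{\Mech,\OLS,\theta}v)\big]\,\P_{\theta^\gamma}(\d v)+\big(\int g(\R_{\Mech,\OLS,\theta}v)\,\P_{\theta^\gamma}(\d v)-\int g(\R_{\Mech,\OLS,\theta}v)\,\P_\theta(\d v)\big)$; the second term vanishes because $v\mapsto g(\R_{\Mech,\OLS,\theta}v)$ is a fixed bounded continuous function and $\P_{\theta^\gamma}\to\P_\theta$ weakly (model continuity), and the first is controlled by uniform tightness of $\{\P_{\theta^\gamma}\}$ (again model continuity) together with the fact that $\sup_{v\in K}|g(\R_{\Mech,\OLS,\theta^\gamma}v)-g(\R_{\Mech,\OLS,\theta}v)|\to0$ on any fixed compact $K$ --- if not, a subnet yields $v_\gamma\to v_\ast\in K$ along which joint continuity forces both $g(\R_{\Mech,\OLS,\theta^\gamma}v_\gamma)$ and $g(\R_{\Mech,\OLS,\theta}v_\gamma)$ to converge to $g(\R_{\Mech,\OLS,\theta}v_\ast)$, a contradiction. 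Uniform tightness of $\{\P_{\Mech,\OLS,\theta^\gamma}^\perp\}$ follows by pushing a single compact set of near-full $\P_{\theta^\gamma}$-measure through the jointly continuous family $\R_{\Mech,\OLS,\cdot}$.

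Statement (c) is a soft lemma of the same flavour. Given $g\in C_b(\Para)$ and $\epsilon>0$, fix a compact $K\subseteq\Para$ with $\sup_\gamma\mu^\gamma(\Para\setminus K)<\epsilon$ and write
	\begin{equation*}
		\int g(v^\gamma+w)\,\mu^\gamma(\d w)-\int g(v+w)\,\mu(\d w)=\int\big[g(v^\gamma+w)-g(v+w)\big]\,\mu^\gamma(\d w)+\Big(\int g(v+w)\,\mu^\gamma(\d w)-\int g(v+w)\,\mu(\d w)\Big).
	\end{equation*}
The parenthesised term tends to $0$ since $w\mapsto g(v+w)$ lies in $C_b(\Para)$ and $\mu^\gamma\to\mu$ weakly. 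In the first term the integral over $\Para\setminus K$ is at most $2\|g\|_\infty\epsilon$, and the integral over $K$ is dominated by $\sup_{w\in K}|g(v^\gamma+w)-g(v+w)|$, which $\to0$ by the same compactness/subnet argument (a failure yields $w_\gamma\to w_\ast\in K$ with $v^\gamma+w_\gamma\to v+w_\ast$ and $v+w_\gamma\to v+w_\ast$, contradicting continuity of $g$); then let $\epsilon\downarrow0$. Uniform tightness of $\{\big(\tau_{v^\gamma}\big)_*\mu^\gamma\}$ is immediate: with $C:=\overline{\{v^\gamma\}}\cup\{v\}$ compact, $C+K$ is compact and carries mass $\ge1-\epsilon$ under every $\big(\tau_{v^\gamma}\big)_*\mu^\gamma$, since $0\in C-v^\gamma$.

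The genuine analytic content here is imported rather than proved: it is Lemma~\ref{lem_ctyshort}, whose proof in Appendix~\ref{app_cty} is where the metrizability hypotheses on $\Para$ and $Y$ are actually needed (without them the OLS estimator need not depend continuously on $\theta$ at all). For the present proposition the main obstacles are instead organisational: (i) extracting joint continuity of $(\theta,v)\mapsto\R_{\Mech,\OLS,\theta}(v)$ from Lemma~\ref{lem_ctyshort} while keeping track of the $\theta$-dependent domains $\bar{A_\theta}$ (which vary discontinuously as sets) and the fibre structure of $\Theta_f$, and (ii) arranging the Prokhorov/uniform-tightness bookkeeping so that the two ``pushforward through a varying family'' limits in (b) and (c) can be taken along nets, not merely sequences --- the separable-support hypothesis and the tightness built into model continuity should suffice. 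Note that Proposition~\ref{pro_samemeancov} already pins down the first two moments of $\PP_{\OLS,\theta|\Mech=y}$; the present result adds the topological robustness in $(\theta,y)$ that makes the stochastic OLS estimator usable as an algorithm.
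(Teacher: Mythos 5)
Your proof is correct, but it is organized quite differently from the paper's. The paper's proof works directly with the integral representation \eqref{eqn_proofunif1}: it tests against Lipschitz bounded functions $s$, splits the difference $\EE_{\theta^\gamma|\Mech=y^\gamma}[s]-\EE_{\theta|\Mech=y}[s]$ by the triangle inequality, and controls the varying-center term \emph{quantitatively} via the operator norms $M_{\Mech,\theta}$ and $\Delta_{\Mech,\theta,\theta'}$ from the long version of the Strong Continuity Lemma (Lemma \ref{lem_cty}), arriving at the explicit bound $\lambda M_{\Mech,\theta^\gamma}\|y^\gamma-y\|_Y+\lambda\Delta_{\Mech,\theta,\theta^\gamma}\|y\|_Y$; the varying-measure term is handled by simply asserting that $\P_{\Mech,\theta^\gamma}^\perp\to\P_{\Mech,\theta}^\perp$ weakly, and Portmanteau closes the argument. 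You instead factor the map as translation composed with pushforward, prove three soft lemmas (joint continuity of the center, weak continuity plus uniform tightness of the residual measures, and joint continuity of translation on tight families), and compose. The trade-off: your route needs only the short version of the continuity lemma and general $C_b$ test functions, at the price of tightness bookkeeping and subnet compactness arguments; the paper's route is shorter and quantitative but leans on the Lipschitz reduction and leaves the residual-measure convergence unproved --- your step (b) is actually a useful supplement there, and could be streamlined by noting that $\|\R_{\Mech,\OLS,\theta^\gamma}v-\R_{\Mech,\OLS,\theta}v\|_\Para\le\Delta_{\Mech,\theta,\theta^\gamma}\|\Mech\|_\op\|v\|_\Para$ gives uniform convergence on compacts directly, without the subnet extraction. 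One small caveat: in your step (c) you take $C:=\bar{\{v^\gamma\}}\cup\{v\}$ to be compact, which holds for sequences but not for general convergent nets (only a tail of the net need be confined near the limit); this affects only the uniform-tightness side claim, not the weak convergence, and is easily repaired by restricting to a tail of the net.
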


			The proof of Proposition \ref{pro_PPstrongcty} can be found in Appendix \ref{pro_PPstrongctyproof}.

		\subsection{Convolution Measures and Continuous Disintegrations} \label{sect_contdis}
			
		In general, it is difficult to verify whether a stochastic estimator defines a continuous disintegration: this involves checking the disintegration equation \eqref{eqn_disint} for every integrable function. However, it is straightforward to construct a certain measure for which the stochastic estimator is \emph{trivially} a continuous disintegration. As a consequence, checking optimality of a stochastic estimator reduces to verifying whether a measure equals the corresponding convolution measure. 
		
		Given an estimator $\hat \Mech$, we define its convolution measure by the formula $\P_{\hat \Mech,\theta}^* := \P_{\hat \Mech,\theta}^\perp * \P_{\hat \Mech, \theta}$. Equivalently, for all integrable $s : \Para \to \S$,
			\begin{equation} \label{def_convmeas}
				\int_\Para s(\para) \, \P_{\hat \Mech,\theta}^*(\d \para) := \int_\Para \int_\Para s\big( \lift + \resid \big) \, \P_{\hat \Mech,\theta}^\perp(\d \resid) \, \P_{\hat \Mech,\theta}(\d \lift) = \int_\Data \int_\Para s\big( \hat \Mech(y) + \resid \big) \, \P_{\hat \Mech,\theta}^\perp(\d \resid) \, \P_{\Mech,\theta}(\d y). \end{equation}

		The convolution measure corresponds to the following stochastic algorithm. Sample a random $y$ with respect to $\P_{\Mech,\theta}$, and estimate $\hat \Mech(y)$. Sample an \emph{independent} random $v'$ with respect to $\P_{\theta}$, and compute the residual $\resid := v' - \hat \Mech\big( \Mech(v') \big)$. Then the sum $v := \hat \Mech(y) + \resid$ has law $\P_{\hat \Mech,\theta}^*$.\footnote{Note the importance of independence here. Suppose instead that $y$ and $v'$ are maximally dependent, i.e., $y = \Mech(v')$. In this case, the residual noise would be exactly the difference $\resid := v' - \hat \Mech(y)$. Consequently, $v := \hat \Mech(y) + \resid = v'$, so $v$ would have law $\P_\theta$ (rather than $\P_{\hat \Mech,\theta}^*$).}

%		The convolution measure has the same mean and covariance structure as the original measure $\P_\theta$. However, the measures $\P_\theta$ and $\P_{\Mech,\OLS,\theta}^*$ may differ at higher-order moments (or cumulents). 

%		By the change of variables formula, it is straightforward to see that  forms a continuous disintegration for the convolution measure $\hat\P_{\hat \Mech, \theta}^*$.
		
			\begin{thm} \label{thm_convdisint}
				The stochastic estimator $y \mapsto \PP_{\hat \Mech, \theta|\Mech=y}$ always defines a continuous disintegration of the convolution measure $\P_{\hat \Mech,\theta}^*$ (with respect to $\Mech$). The disintegration equation states that 
					\begin{equation} \label{eqn_convdisint}
						\int_\Para s(\para) \, \P_{\hat \Mech,\theta}^*(\d \para) = \int_\Data \int_\Para s(\para) \, \PP_{\hat \Mech,\theta|\Mech=y}(\d \para) \, \P_{\Mech,\theta}(\d y) \end{equation}
				for all integrable $s : \Para \to \S$. Consequently, the stochastic estimator $y \mapsto \PP_{\hat \Mech, \theta|\Mech=y}$ defines a continuous disintegration of the original measure if and only if $\P_\theta = \P_{\hat \Mech,\theta}^*$.
			\end{thm}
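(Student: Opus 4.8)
The plan is simply to unwind the two defining formulas \eqref{def_stochestint} and \eqref{def_convmeas}: both sides of \eqref{eqn_convdisint} turn out to be the same iterated integral written two ways. Since Lemma \ref{lem_PPcty} already supplies coherency and weak continuity of the stochastic estimator $y \mapsto \PP_{\hat \Mech, \theta|\Mech=y}$, the only structural point to settle before the computation is that the outer integrating measure appearing on the right of \eqref{eqn_convdisint}, namely $\P_{\Mech,\theta}$, really is the $\Mech$-image of the convolution measure $\P_{\hat \Mech,\theta}^*$. Because $\Mech$ is linear, the $\Mech$-pushforward of a convolution is the convolution of the pushforwards, so $\Mech_* \P_{\hat \Mech,\theta}^* = \big(\Mech_* \P_{\hat \Mech, \theta}^\perp\big) * \big(\Mech_* \P_{\hat \Mech, \theta}\big)$. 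On $\supp \P_\theta$ one has $\Mech \circ \R_{\hat \Mech} = \Mech - \Mech \circ \hat \Mech \circ \Mech = 0$, using that $\hat \Mech$ is a right-inverse of $\Mech$ on $\supp \P_{\Mech,\theta}$ and that $\Mech(\supp \P_\theta) \subseteq \supp \P_{\Mech,\theta}$; hence $\Mech_* \P_{\hat \Mech, \theta}^\perp$ is the unit point mass at the origin. Likewise $\Mech \circ \L_{\hat \Mech} = (\Mech \circ \hat \Mech) \circ \Mech = \Mech$ on $\supp \P_\theta$, so $\Mech_* \P_{\hat \Mech, \theta} = \Mech_* \P_\theta = \P_{\Mech,\theta}$. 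Therefore $\Mech_* \P_{\hat \Mech,\theta}^* = \P_{\Mech,\theta}$, which is exactly the measure appearing in \eqref{eqn_convdisint}; in particular $\P_{\hat \Mech,\theta}^*$ is a probability measure, and it is Radon, being the image under the continuous addition map $\Para \times \Para \to \Para$ of the product measure $\P_{\hat \Mech, \theta}^\perp \otimes \P_{\hat \Mech, \theta}$, which is itself Radon by the separable-support hypothesis.

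Granting this, \eqref{eqn_convdisint} is a Tonelli/Fubini computation. For nonnegative measurable $s : \Para \to \S$, the inner integral against $\PP_{\hat \Mech, \theta|\Mech=y}$ equals, by its defining formula \eqref{def_stochestint}, the quantity $\int_\Para s\big( \hat \Mech(y) + \resid \big) \, \P_{\hat \Mech,\theta}^\perp(\d \resid)$, and $y \mapsto \int_\Para s \, \d \PP_{\hat \Mech, \theta|\Mech=y}$ is measurable by the continuity established in Lemma \ref{lem_PPcty}; integrating this against $\P_{\Mech,\theta}(\d y)$ and comparing with \eqref{def_convmeas} yields precisely $\int_\Para s \, \d \P_{\hat \Mech,\theta}^*$. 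One then passes from nonnegative $s$ to arbitrary $\P_{\hat \Mech,\theta}^*$-integrable $s$ by decomposing into real/imaginary and positive/negative parts, every integral in sight being finite by the nonnegative case. Together with coherency and continuity from Lemma \ref{lem_PPcty}, this shows $y \mapsto \PP_{\hat \Mech, \theta|\Mech=y}$ is a continuous disintegration of $\P_{\hat \Mech,\theta}^*$.

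For the final ``if and only if'': if $\P_\theta = \P_{\hat \Mech,\theta}^*$, then substituting into \eqref{eqn_convdisint} and using $\Mech_* \P_\theta = \P_{\Mech,\theta}$ turns \eqref{eqn_convdisint} into the disintegration equation \eqref{eqn_disint} for $\P_\theta$; since coherency and continuity are already in hand, $y \mapsto \PP_{\hat \Mech, \theta|\Mech=y}$ is a continuous disintegration of $\P_\theta$. Conversely, if it is a continuous disintegration of $\P_\theta$, then for every bounded continuous $s : \Para \to \S$ we have $\int_\Para s \, \d \P_\theta = \int_\Data \int_\Para s(\para) \, \PP_{\hat \Mech, \theta|\Mech=y}(\d \para) \, \P_{\Mech,\theta}(\d y) = \int_\Para s \, \d \P_{\hat \Mech,\theta}^*$, the last step by \eqref{eqn_convdisint}; since bounded continuous functions form a determining class for Radon measures on $\Para$ (a Hausdorff space whose dual separates points), $\P_\theta = \P_{\hat \Mech,\theta}^*$.

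There is essentially no deep obstacle here; the ``hard part'' is purely the bookkeeping needed to legitimize the Tonelli step — measurability of the inner integral (handled by Lemma \ref{lem_PPcty}) and Radon-regularity of the convolution measure — together with the measure-determining-class argument in the converse direction. The one genuinely load-bearing structural input is that $\hat \Mech$ is a right-inverse of $\Mech$: this is what forces the residual measure to push forward to a point mass at the origin, and hence what makes $\P_{\Mech,\theta}$ — rather than some other image measure — the correct outer measure in \eqref{eqn_convdisint}.
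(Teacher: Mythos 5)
Your proposal is correct and follows essentially the same route as the paper: unwind the definitions \eqref{def_stochestint} and \eqref{def_convmeas}, identify the two iterated integrals via Fubini, invoke Lemma \ref{lem_PPcty} for coherency and continuity, and settle the ``if and only if'' by a measure-determining argument (the paper states the contrapositive with an integrable witness, you use bounded continuous functions as a determining class — the same content). Your explicit verification that $\Mech_* \P_{\hat \Mech,\theta}^* = \P_{\Mech,\theta}$, via $\Mech \circ \R_{\hat \Mech} = 0$ and $\Mech \circ \L_{\hat \Mech} = \Mech$ on $\supp \P_\theta$, is a worthwhile point the paper leaves implicit when it takes $\P_{\Mech,\theta}$ as the outer measure in \eqref{eqn_convdisint}.
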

			\begin{proof}
				The latter equality in the definition \eqref{def_convmeas} is justified by the definition of the lifted measure, $\P_{\hat \Mech,\theta} = \big( \L_{\hat \Mech} \big)_* \P_{\theta} = \hat \Mech_* \Mech_* \P_\theta = \hat \Mech_* \P_{\Mech,\theta}$. Using definition \eqref{def_stochestint}, the latter expression of \eqref{def_convmeas} is replaced by \\ $\int_\Data \int_\Para s(\para) \, \PP_{\hat \Mech,\theta|\Mech=y}(\d \para) \, \P_{\Mech,\theta}(\d y)$, which proves \eqref{eqn_convdisint}.
				
				If $\P_\theta \ne \P_{\hat \Mech,\theta}^*$, then there exists an integrable $s$ for which $\int_\Para s(\para) \, \P_\theta(\d \para)$ does not equal $\int_\Para s(\para) \, \P_{\hat \Mech,\theta}^*(\d \para) = \int_Y \int_\Para s(\para) \, \PP_{\hat \Mech,\theta|\Mech=y}(\d \para) \, \P_{\Mech,\theta}(\d y)$, using \eqref{eqn_convdisint}. 
			\end{proof}

		\subsection{UII Measures and the Stochastic OLS Estimator} \label{sect_UII}
	
		We say that a measure satisfies the ``uncorrelated implies independent'' (UII) hypothesis when its dependence structure is functionally characterized by its covariance structure. Equivalently, the following statement is satisfied for all functionals $f \in \Para^*$:
			\begin{equation} \label{eqn_UIIhyp}
				\mbox{if $f$ is $\Mech$-uncorrelated, then $f$ is $\Mech$-independent.} \end{equation}
		Gaussian measures are the archetypical examples of UII measures, and are the motivation for the general definition. Another example of a UII measure is the law of a family of associated random variables \cite[Corollary 3]{newman1984asymptotic}.\footnote{The basic ingredient in the proof that associated random variables are UII is Lebowitz's basic distribution function inequality \cite{lebowitz1972bounds}.} Since multivariate extreme-value distributions are associated \cite[Prop.~5.1]{marshall1983domains}, their laws form an important class of non-Gaussian UII measures.
		
		 Let $\P_{\Mech,\OLS,\theta}^* := \P_{\Mech,\OLS,\theta} * \P_{\Mech,\OLS,\theta}^\perp$ denote the OLS convolution measure. The next theorem demonstrates that a measure is UII if and only if it equals its OLS convolution measure. In light of Theorem \ref{thm_convdisint}, this is equivalent to the stochastic OLS estimator being a continuous disintegration. The intuition behind the proof is that the independence structure of a UII measure reduces to verifying that functionals are uncorrelated.

			\begin{thm} \label{thm_UII}
				$\P_\theta$ is a UII measure (with respect to $\Mech$) if and only if $\P_\theta = \P_{\Mech,\OLS,\theta}^*$ if and only if the stochastic OLS estimator defines a continuous disintegration. Consequently, UII measures always admit continuous disintegrations through continuous linear maps.
			\end{thm}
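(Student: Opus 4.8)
The plan is to separate the statement into an easy equivalence, an immediate corollary, and one substantive equivalence. The equivalence ``$\P_\theta = \P_{\Mech,\OLS,\theta}^*$ iff the stochastic OLS estimator defines a continuous disintegration'' is just Theorem \ref{thm_convdisint} applied to $\hat\Mech := \OLSoptheta$ (convolution being commutative, $\P_{\Mech,\OLS,\theta}^* = \P_{\Mech,\OLS,\theta}^\perp * \P_{\Mech,\OLS,\theta}$ is exactly the convolution measure occurring there). The closing ``consequently'' is then immediate, since a UII measure has the stochastic OLS estimator as one of its continuous disintegrations; and Corollary \ref{cor_gaussian} follows once one notes that a Gaussian measure is UII (uncorrelated jointly Gaussian functionals are independent). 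So the real work is to prove: $\P_\theta$ is UII with respect to $\Mech$ $\iff$ $\P_\theta = \P_{\Mech,\OLS,\theta}^*$.

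I would prove this by comparing characteristic functionals, exploiting the splitting of $V^*$ induced by the OLS estimator. Write $\L := \L_{\Mech,\OLS,\theta} = \OLSoptheta\circ\Mech$ and $\R := \R_{\Mech,\OLS,\theta} = \I_V - \L$; after a routine translation one may assume $m_\theta = 0$, so that $\L$ and $\R$ are genuine continuous linear operators on $\bar{U_\theta}$ and each $g \in V^*$ splits as $g = \L^* g + \R^* g$. Lemma \ref{lem_ortho} together with the Main Theorem supplies the bookkeeping: $\L,\R$ restrict to complementary \emph{orthogonal} projections of $U_\theta$ onto $\hat U_{\Mech,\theta}$ and $\hat U_{\Mech,\theta}^\perp$, so $\L^*,\R^*$ are idempotent with $\L^*\R^* = \R^*\L^* = 0$ and $\L^* + \R^* = \I_{V^*}$; moreover $\L^*\circ\Mech^* = \Mech^*$ because $\Mech\circ\OLSoptheta$ is the identity on the data support; and, transporting the orthogonal projection of $H_\theta$ onto $\hat H_{\Mech,\theta}$ through $\iota_\theta^*$, one sees that a functional $f$ is $\Mech$-uncorrelated (i.e.\ $\cov_\theta[f\,|\,\Mech^* e] = 0$ for every $e \in Y^*$, equivalently $\iota_\theta^* f \perp \hat H_{\Mech,\theta}$) precisely when $\L^* f = 0$ $\P_\theta$-a.s., in which case $f = \R^* f$ $\P_\theta$-a.s. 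Finally, the pushforward and convolution rules for characteristic functionals give, for each $g \in V^*$,
\begin{equation*}
\widehat{\P_{\Mech,\OLS,\theta}^*}(g) = \widehat{\P_\theta}(\L^* g)\cdot\widehat{\P_\theta}(\R^* g) \qquad\text{whereas}\qquad \widehat{\P_\theta}(g) = \E_\theta\!\big[\exp(\i\,\L^* g)\,\exp(\i\,\R^* g)\big],
\end{equation*}
so that (by uniqueness of the characteristic functional for Radon measures with separable support) $\P_\theta = \P_{\Mech,\OLS,\theta}^*$ holds iff $\L^* g$ and $\R^* g$ are independent random variables under $\P_\theta$ for every $g \in V^*$.

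Both implications are then short. If $\P_\theta$ is UII: for any $g$ the functional $\R^* g$ has $\L^*(\R^* g) = 0$, hence is $\Mech$-uncorrelated, hence $\Mech$-independent; and $\L^* g = g\circ\L$ factors through $\Mech$, so it is $\sigma(\Mech)$-measurable; therefore $\L^* g \perp \R^* g$, giving $\P_\theta = \P_{\Mech,\OLS,\theta}^*$. Conversely, if $\P_\theta = \P_{\Mech,\OLS,\theta}^*$ and $f$ is $\Mech$-uncorrelated, then $f = \R^* f$ a.s.; applying the displayed factorization to $g := s\,\R^* f + \Mech^* e$ (with $s \in \Reals$, $e \in Y^*$), for which $\L^* g = \Mech^* e$ and $\R^* g = s\,\R^* f$, yields $\E_\theta[\exp(\i(s f + \Mech^* e))] = \E_\theta[\exp(\i s f)]\,\E_\theta[\exp(\i\,\Mech^* e)]$; as $s$ and $e$ vary, this is precisely the statement that the joint characteristic functional of $(f,\Mech)$ factors, i.e.\ $f$ is independent of $\Mech$, so $\P_\theta$ is UII.

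I expect the main obstacle to lie not in the algebra but in the analytic/measure-theoretic substrate that makes the characteristic-functional argument legitimate at this level of generality: one needs that the characteristic functional determines a Radon measure with separable support, and that $\sigma(\Mech)$ agrees (modulo $\P_\theta$-null sets) with the $\sigma$-algebra generated by the functionals $\{\Mech^* e : e \in Y^*\}$ — both standard consequences of the structure theory of Section \ref{sect_struct} and the separable-support hypothesis, but the places where the generality must be handled carefully. The one genuinely fiddly point within the algebra is pinning down the characterization of ``$\Mech$-uncorrelated'' with the correct treatment of the mean; everything else is bookkeeping on top of Lemma \ref{lem_ortho} and Theorem \ref{thm_iso}.
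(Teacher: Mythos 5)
Your proposal is correct, and it reaches the substantive equivalence by a genuinely different route from the paper. The paper's proof (Appendix \ref{thm_UIIproof}) argues at the level of events: it expands $\P_\theta\big(f^{-1}A \cap \Mech^{-1}B\big)$ against the convolution representation, invokes the same two geometric facts you isolate --- an $\Mech$-uncorrelated $f$ vanishes a.s.\ under the lifted measure, and $\Mech$ vanishes a.s.\ under the residual measure, both via the Main Theorem and Lemma \ref{lem_ortho} --- and then factorizes the double integral by Fubini. You encode those same facts in the operator identities $\L^*\R^* = \R^*\L^* = 0$ and $\L^* + \R^* = \I_{\Para^*}$ and compare characteristic functionals instead. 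The paper's route is more economical in hypotheses: working directly with $\Mech^{-1}B$ for Borel $B$ sidesteps both the uniqueness theorem for characteristic functionals of Radon measures and the identification of $\sigma(\Mech)$ with the $\sigma$-algebra generated by $\{\Mech^* e : e \in Y^*\}$, which are exactly the two substrate issues you flag. What your route buys is the implication UII $\Rightarrow \P_\theta = \P_{\Mech,\OLS,\theta}^*$: your decomposition $g = \L^* g + \R^* g$, with $\R^* g$ $\Mech$-uncorrelated (hence by UII $\Mech$-independent) and $\L^* g$ measurable with respect to $\Mech$, forces the characteristic functionals of $\P_\theta$ and $\P_{\Mech,\OLS,\theta}^*$ to agree. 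The two halves of the paper's appendix proof both amount to the reverse implication (equality $\Rightarrow$ UII), once directly and once in contrapositive form, so this converse is the one point where your argument is more complete than the printed one rather than merely different. The remaining caveats --- extending $\R^* g$ off $\bar{A_\theta}$ so that the UII hypothesis, stated for $f \in \Para^*$, can be applied to it, and the mean-centering in the characterization of $\Mech$-uncorrelated functionals --- are genuine but sit at the same level of rigor as the paper's own treatment.
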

			
			The proof of Theorem \ref{thm_UII} may be found in Appendix \ref{thm_UIIproof}. \newline

		This completely resolves the problem of linear conditioning for UII measures. Corollary \ref{cor_gaussian} immediately follows: Gaussian measures always admit continuous disintegrations through continuous linear maps.

%		\section*{(something about dynamics)} \label{sect_dynamics}

%		 Let $(u_1, u_2, \cdots)$ be a sequence of IID copies of the noise vector.

		\part{Some Problems in Machine Learning} \label{part3}
		
		We now apply the structure theory to some problems in machine learning. In Section \ref{sect_spatialstats}, we solve the problem of prediction using the OLS kriging predictor; we also prove a general representation theorem for covariance tensors for vector-valued arrays on arbitrary index spaces. In Section \ref{sect_SVM}, we solve the problem of classification using a general support-vector machines classifier.
		
		\section{The Problem of Prediction and the OLS Kriging Predictor} \label{sect_spatialstats}
		
		Many problems in machine learning can be recast as problems in spatial statistics, i.e., the study of random associative arrays (a.k.a. key-value pairings).\footnote{Associative arrays are a flexible structure for describing data of many different types: relational databases (like SQL) are just indexed families of tabular arrays, and graph databases (like neo4j) are just indexed families of key-value pairings (associative arrays).} A general solution is provided by ``kriging predictors'', as developed by the geostatistician Danie Krige in his master's thesis \cite{krige1951statistical}, and formulated theoretically by Georges Matheron  \cite{matheron1963principles}.

		Let $I$ denote a topological space of \emph{indexes} (or \emph{keys}), and let $V$ denote a topological vector space of primitive \emph{values}. We assume that index space $I$ is separable and locally compact Hausdorff (LCH), and that value space $V$ is locally convex, complete and Hausdorff.\footnote{The topology of a locally convex space is generated by seminorms.} Formally, a $V$-valued associative array is a (continuous) function $a : I \to V$; its set of key-value pairs is simply the graph $\big\{ \big( i, a(i) \big) \big\} \subseteq I \times V$. Under these minimal topological assumptions, the space of all arrays $V^I := C(I,V)$ is itself a topological vector space (equipped with the compact-open topology).\footnote{Precisely: $V^I$ is locally convex, complete and Hausdorff.} A \emph{random} associative array is described by a probability measure $\P$ on array space $V^I$. This satisfies the assumptions of Section \ref{sect_structspaces}, so the structure theory of this article applies. 
		
		\begin{rem}[Arrays of Arrays] \label{rem_notation}
			Let $I$ and $J$ be index spaces, and let $V$ be a (Fr\'echet) value space. An array-of-arrays can be thought of as either a $V$-valued array (indexed by the product space $I \times J$) or a $V^J$-valued array (indexed by $I$). Since $V$ is Fr\'echet, its topology is compactly generated \cite{52749}; consequently, the exponential law $V^{I \times J} \cong (V^J)^I$ is guaranteed to hold.\footnote{When $V$ is Fr\'echet, the curry operator $\curry : V^{I \times I} \to (V^J)^I$, defined by $\curry(a)(i)(j) := a(i,j)$, is a linear homeomorphism. If $V$ is not Fr\'echet, then the inverse of the curry operator may not be continuous. An example of a non-Fr\'echet value space is the (cartesian) product space $V = \prod_{r \in \Reals} \Reals$, which is much larger than the (Fr\'echet) space $\Reals^\Reals := C(\Reals, \Reals)$ \cite{164681}.}
		\end{rem}
		
		\begin{exa}[Time Series and Spatio-Temporal Statistics]
			A \emph{time series} is an associative array $a : T \to V$, where the index set is a closed subset $T \subseteq \Reals$ called ``time''. A \emph{spatial field} is an associative array $a : S \to V$, where the index set is a closed subset $S \subseteq \Reals^3$ called ``space''. A \emph{spatio-temporal field} is an associative array $a : S \times T \to V$, where the index set is ``space-time'' $S \times T \subseteq \Reals^{3+1}$.\footnote{For a comprehensive introduction to spatio-temporal statistics, see Cressie and Wikle \cite[Chapter 6]{cressie2011statistics}. For discontinuous time series, one works with Skorokhod space $D(T, V)$, consisting of right-continuous, left-limited (c\`adl\`ag) time series with values in a Fr\'echet space \cite{whitt2002stochastic}.} In light of Remark \ref{rem_notation}, we may consider spatio-temporal fields as time-varying spatial fields $(V^S)^T$, as spatially-varying time series $(V^T)^S$, or as fields which vary jointly in space and time $V^{S \times T}$. %In applications to differential geometry or general relativity, one considers general tensor fields over a space-time manifold $I$.\footnote{The case of random Riemannian metrics is studied in \cite{lagatta2014geodesics}. There, the space of interest is $C(\Reals^d, \operatorname{Sym}_d)$, where $\operatorname{Sym}_d$ is the space of symmetric $d \times d$ matrices.}
		
		In most applications, the value space $V$ is finite-dimensional, and each dimension corresponds to a particular attribute of interest. e.g., in hydrological applications, one considers $V = \Reals^3$, corresponding to the Hydraulic Conductivity, Porosity, and Tortuosity of a porous medium. %In kriging, we are given samples of the field located at various points $D = \{d_1, d_2, \cdots\}$ in space-time $\Reals^{3+1}$, and wish to estimate the field at points in $I$ not in $D$.\footnote{Our methods allow for the known region $D$ to be an uncountable subset of $I$, such as a region with fixed (known) constants such as a road or a lake. } Given a kernel function $c$, let $\hat \Mech$ be its OLS estimator. Given a data array $y \in V^D$, the predicted array $\hat \Mech_\OLS(y) \in V^I$ yields a value $\hat \Mech_\OLS(y)(i)$ for every input $i \in I$, and agrees with $y(d)$ when $i=d$. 
		\end{exa}
		
		Because of the indexing, the covariance structure of a random array admits a representation using a \emph{covariance kernel}. In the scalar-valued case ($V = \S$), the covariance kernel is a (continuous) function $c : I \times I \to \S$ which is symmetric and non-negative definite.\footnote{Symmetry means that $c(i,i') = \bar{c(i',i)}$ for all $i,i' \in I$, and non-negative-definite means that \begin{equation} \label{def_nonneg} \sum_n \bar{s_n} s_{n'} c(i_n, i_{n'}) \ge 0 \end{equation} for any points $s_n \in \S$ and $i_n \in I$. By Kolmogorov's extension theorem \cite{karatzas1991brownian}, condition \eqref{def_nonneg} is equivalent to the condition $\int_I \int_I c(i,i') \bar{\psi}(\d i) \varphi(\d i) \ge 0$ for all ``co-arrays'', i.e., compactly supported Radon measures on $I$ (cf. Section \ref{sect_coarrays}).} In the vector-valued case, the situation is more subtle: the covariance kernel is actually a \emph{covariance tensor} $c : I \times I \to V \otimes_\cov V$ (Theorem \ref{thm_covrep}).\footnote{In the vector-valued case, the non-negative-definite condition \eqref{def_nonneg} is replaced by $\sum_n \bar{s_n} s_{n'} (\bar{e_n} \otimes e_{n'}) \big[ c(i_n, i_{n'}) \big] \ge 0$ for all $s_n \in \S$, $i_n \in I$ and $e_n \in V^*$.} We will discuss the scalar-valued case in Section \ref{sect_covfuns}, and the vector-valued case in Section \ref{sect_covtens}. The particular tensor product $V \otimes_\cov V$ is inherited from the covariance structure.
		
%		Many of our results are true under the condition that $c$ be conditionally non-negative-definite, which means that the condition \eqref{def_nonneg} is true only for scalars satisfying $\sum s_n = 0$, or .

		\subsection{The Problem of Prediction} \label{sect_kriging}
		
		The problem of \emph{prediction} is to estimate values of an array $a : I \to V$ given only the values over a restricted subset $D \subseteq I$, and can be reformulated as the problem of linear regression from statistics.\footnote{Note: the individual arrays $a : I \to V$ are highly nonlinear objects (we do not even assume linear structure on $I$. The space $V^I$, however, is linear, since the sum of two arrays is an array, as is any scalar multiple.} Consider the two array spaces $V^I$ and $V^D$, and consider the restriction map $\Mech : V^I \to V^D$. i.e., $\Mech(a)(d) := a(d)$ for all $a \in V^I$ and $d \in D$. A \emph{kriging predictor} is a continuous function $\hat \Mech : V^D \to V^I$ which is consistent with the observed data. i.e., $\hat \Mech(y)(d) = y(d)$ for all $y \in V^D$ and $d \in D \subseteq I$.\footnote{In practice, the words ``predictor'' and ``estimator'' are largely used as synonyms. Estimators estimate parameters, and predictors predict unseen values. In this section, the unseen values are represented as a parameter array $a : I \to V$.} %The value $\hat \Mech(y)(i)$ can be considered to be a synthetic prediction given 
		
		By exploiting the structure theory, we may use OLS to construct a natural kriging predictor $\hat \Mech_{\OLS,c} : V^D \to V^I$ for any fixed covariance kernel $c$.\footnote{We suppose that $c$ is strictly positive-definite, so that the OLS estimator is well-defined for all possible array data $y \in V^D$.} The Gauss-Markov theorem (Theorem \ref{thm_GMT}) ensures that $\hat \Mech_{\OLS,c}$ is a good kriging predictor, minimizing both estimated variance and mean-squared error of predicted data values. When $V$ is a Banach space and $D$ is compact, the Strong Continuity Lemma (Lemma \ref{lem_ctyshort}) ensures that the operator $(c,y) \mapsto \hat \Mech_{\OLS,c}(y)$ is jointly continuous in the kernel and the data. We organize this as the following result, which follows from the more general Theorem \ref{thm_OLSarrays}. 
		
		\begin{cor}[OLS Kriging Predictor] \label{cor_kriging}
				Suppose that $D$ is a closed subset of $I$, and let $\Mech : V^I \to V^D$ be the restriction map, defined by $\Mech(a)(d) = a(d)$ for all $d \in D$. If $c$ is a strictly positive-definite kernel on $I$, then the OLS kriging predictor $\hat \Mech_{\OLS,c} : V^D \to V^I$ is well-defined and continuous on the maximal domain. The OLS kriging predictor is a ``best linear unbiased predictor'' (BLUP). The following diagram commutes:
				\begin{equation} \label{dia_2ma}
					\begin{matrix} \xymatrix{ & V^I \ar@{->}@/^/[rr]^{\Mech} && V^D \ar@{_(-->}@/^/[ll]^{\hat \Mech_{\OLS,c}} \\ U_c \ar@{^(->}[ur] \ar@{->>}@/^/[rr]^{\Mech} && U_{D,c} \ar@{_(->}@/^/[ll]^{\Mech^{-1}_c} \ar@{^(->}[ur] & } \end{matrix} \end{equation}	
				If $V$ is a Banach space and $D$ is compact, then $(c,y) \mapsto \hat \Mech_{\OLS,c}(y)$ is jointly continuous.
			\end{cor}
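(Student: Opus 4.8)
The plan is to obtain Corollary~\ref{cor_kriging} by specializing the structure theory of Part~\ref{part1} to the array spaces $V^I = C(I,V)$ and $V^D = C(D,V)$ and to the restriction map $\Mech$; indeed, once the general array result (Theorem~\ref{thm_OLSarrays}) is in hand the corollary is pure bookkeeping, and the point below is to indicate how that bookkeeping goes.

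\textbf{Step 1: check the standing hypotheses and set up the Cameron--Martin picture.} Under the assumptions on $I$ (separable LCH) and $V$ (locally convex, complete, Hausdorff), the compact-open topology makes $V^I$ and $V^D$ locally convex, complete, and Hausdorff, and their duals separate points --- the functionals $a\mapsto e[a(i)]$ with $i$ in the index set and $e\in V^*$ already do, using that $V^*$ separates points of $V$. The restriction map $\Mech:V^I\to V^D$ is linear, and continuous because every compact subset of the closed set $D$ is a compact subset of $I$. By the representation theorem for covariance tensors (Theorem~\ref{thm_covrep}), the strictly positive-definite kernel $c$ is realized as the covariance operator $k_c:(V^I)^*\to V^I$ of a (mean-zero, Radon) measure $\P_c$ on $V^I$; this places us inside Lemma~\ref{lem_threespace}, with the embedded Cameron--Martin space $U_c\subseteq V^I$ equal to the reproducing kernel Hilbert space of $c$, and likewise identifies the embedded Cameron--Martin space $U_{D,c}\subseteq V^D$ of the push-forward $\P_{\Mech,c}$ with the RKHS of the restricted kernel $c|_{D\times D}$.

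\textbf{Step 2: invoke the Main Theorem, existence of OLS, Gauss--Markov.} Applying Lemma~\ref{lem_twospaces} and Theorem~\ref{thm_iso} to the pair $(\Mech,\P_c)$ produces the lifted Cameron--Martin space $\hat U_{\Mech,c}\subseteq U_c$ and shows that $\Mech$ restricts to a Hilbert-space isomorphism $\hat U_{\Mech,c}\xrightarrow{\sim}U_{D,c}$, with continuous inverse $\Mech^{-1}_c:U_{D,c}\to\hat U_{\Mech,c}\hookrightarrow U_c$. Since $c$ is strictly positive-definite, so is $c|_{D\times D}$, whence $U_{D,c}$ is dense in $V^D$ --- exactly the density statement that gives the OLS estimator its maximal domain. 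Theorem~\ref{thm_OLS} then applies directly: $\hat\Mech_{\OLS,c}$ is the continuous extension of $\Mech^{-1}_c$ to $\bar{U_{D,c}}=V^D$, it is a well-defined continuous linear estimator on all of $V^D$, and the estimator identity $\Mech\circ\hat\Mech_{\OLS,c}=\operatorname{id}$ unwinds to the consistency requirement $\hat\Mech_{\OLS,c}(y)(d)=y(d)$ for all $d\in D$. The commuting diagram~\eqref{dia_2ma} is a transcription of this picture, since the inclusions $U_c\hookrightarrow V^I$ and $U_{D,c}\hookrightarrow V^D$ intertwine the two copies of $\Mech$ and $\Mech^{-1}_c$ is a section of $\Mech|_{U_c}$. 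The BLUP assertion is the Generalized Gauss--Markov Theorem (Theorem~\ref{thm_GMT}) read through the separating family of readout functionals $a\mapsto e[a(i)]$: among continuous linear predictors, $\hat\Mech_{\OLS,c}$ simultaneously minimizes estimated variance and mean-squared error of every linear functional of the predicted array, in particular of every pointwise evaluation. For joint continuity when $V$ is Banach and $D$ is compact, $V^D=C(D,V)$ is Banach, and I would localize to compact index sets: for each compact $K\subseteq I$ containing $D$ the map $\Mech$ factors through the Banach space $C(K,V)$, so by contravariance of estimation (Theorem~\ref{thm_OLS}) the $C(K,V)$-component of $\hat\Mech_{\OLS,c}(y)$ is the OLS estimator for $C(K,V)\to V^D$, to which the Strong Continuity Lemma (Lemma~\ref{lem_ctyshort}) applies with $c$ as hyperparameter (equipped with a topology under which $c\mapsto k_c$ is continuous); since the compact-open topology on $V^I$ is the initial topology for the restrictions to the $C(K,V)$, joint continuity into each $C(K,V)$ assembles into joint continuity into $V^I$.

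\textbf{Main obstacle.} The genuine content --- and where I expect the real work to sit --- is Step~1's identification of the abstract Cameron--Martin spaces $U_c$ and $U_{D,c}$ with the RKHSs of $c$ and $c|_{D\times D}$ in the \emph{vector-valued} setting, together with density of a strictly-positive-definite RKHS in $C(D,V)$; this is what converts the otherwise content-free restatement of Theorem~\ref{thm_iso} into a concrete statement about predicting array values, and it is precisely what Theorem~\ref{thm_covrep} and Theorem~\ref{thm_OLSarrays} are built to supply. Everything else is diagram-chasing and topological bookkeeping.
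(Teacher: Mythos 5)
Your proposal is correct and follows essentially the same route as the paper, which derives Corollary~\ref{cor_kriging} as the special case of Theorem~\ref{thm_OLSarrays} with $i:D\hookrightarrow I$ the inclusion and $w=\operatorname{id}_V$, using Theorem~\ref{thm_covrep} to set up the covariance structure, Theorem~\ref{thm_OLS} and strict positive-definiteness for well-definedness on the maximal domain, Theorem~\ref{thm_GMT} for the BLUP claim, and Lemma~\ref{lem_ctyshort} for joint continuity. If anything, your treatment of joint continuity (localizing to compact $K\subseteq I$ so that Lemma~\ref{lem_ctyshort} applies component-wise) is more careful than the paper's bare citation of that lemma.
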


		When both $I$ and $D$ are finite, the well-definedness of the OLS estimator is a straightforward exercise \cite{wichura2006coordinate}. When $I$ is infinite and $D$ is finite, the ``average-case optimal algorithm'' of Vakhania and Tarieladze \cite{tarieladze2007disintegration} is exactly the OLS estimator. When $I$ and $D$ are both infinite and $V$ is a Banach space, the OLS estimator corresponds to the map $m$ of \cite{lagatta2013continuous}. Our construction generalizes the current state-of-the-art kriging predictor \cite{menafoglio2013universal}, which is defined for finite-dimensional $I$ and Hilbert space $V$.
		
%		By considering the stochastic OLS estimator (Section \ref{sect_OLS}), we may construct a continuous disintegration of a random array (under 
		
		The kernel should be chosen to ensure certain qualitative properties of the estimated arrays. For example, if $I$ is a smooth manifold, the data are $C^k$-smooth, and the covariance function is $C^{2k+1}$-smooth, then the predicted kriging functions will also be $C^k$-smooth. If prediction is to have no effect beyond some threshold, compactly covariance functions may be chosen \cite{gneiting2002compactly}.

%		In Section \ref{sect_coarrays}, we describe co-arrays, which are dual to arrays. In Section \ref{sect_randarrays}, we describe the structure theory of random arrays. In Section \ref{sect_covfun}, we discuss covariance representations in the scalar-valued case

		\subsection{Co-Arrays} \label{sect_coarrays}
		
%		\subsection{Duality: Co-Values and Co-Arrays}
				
		We now present the structure theory of random arrays. Let $V^*$ denote the dual space of value space $V$, consisting of ``co-values'' (continuous linear functionals $f : V \to \S$). Let  $(V^I)^*$ denote the dual space of array space $V^I$, consisting of ``co-arrays'' (continuous linear functionals $\varphi : V^I \to \S$). The Riesz representation theorem \cite[Chapter 4, page 156]{bourbaki2004integration} ensures that co-array space $(V^I)^*$ is isomorphic to $M_\cs(I, V^*)$, the space of compactly supported, $V^*$-valued Radon measures on $I$.\footnote{Recall that a $V^*$-valued Radon measure is a countably-additive function $\varphi : \B(I) \to V^*$ satisfying $\varphi(\varnothing) = 0$ and the inclusion-exclusion principle $\varphi(B \cup B') = \varphi(B) + \varphi(B') - \varphi(B \cap B')$ for all $B,B' \in \B(I)$. If index space $I$ has infinite cardinality, then the dual space $(V^I)^* \cong M_\cs(I, V^*)$ is not isomorphic to $(V^*)^I = C(I, V^*)$, the space of $V^*$-valued arrays. If $I$ is finite, then both $(V^I)^*$ and $(V^*)^*$ are isomorphic to the product space $\S^{|I|}$. The (roman) subscript $\cs$ should not be confused with a kernel function $c$.} Evaluation of co-arrays is defined using the Lebesgue integral: $\varphi[a] := \int_I \varphi(\d i) a(i)$ for all $(\varphi, a) \in (V^I)^* \times V^I$.\footnote{We follow the physicists' notation and write the $V^*$-valued measure to the left of the $V$-valued array. For more details on $V^*$-valued measures, see Dunford and Schwartz \cite{dunford1957linear}.}
		
		Of particular importance is the scalar-valued case, where $V = \S = V^*$. Here, array space $\S^I = C(I,\S)$ consists of all (scalar-valued) continuous functions on $I$, and co-array space $(\S^I)^* = M_\cs(I,\S)$ consists of (scalar-valued) compactly supported Radon measures on $I$. When $I$ is finite, both array space $\S^I$ and co-array space $(\S^I)^*$ are isomorphic to $\S^{|I|}$. When $I$ is infinite, $\S^I$ and $(\S^I)^*$ are not isomorphic.

%			\begin{exa}[Economic Modeling]
%				Consider an economy with $M$ types of goods $G := \{g_1, \cdots, g_M\}$ and $N$ individuals $I := \{i_1, \cdots, i_N\}$. In this setting, a ``basket of goods'' is a primitive value: the vector $v = (v^1, \cdots, v^M)$ represents $v^1$ of good $g_1$, $v^2$ of good $g_2$, etc. A co-vector $p = (p_1, \cdots, p_M)$ represents a price distribution: one unit of good $g_1$ costs $p_1$, etc. The \emph{valuation} of a basket is the product $p[v] := \sum_m p_m v^m$. Value space $V := \Reals^G$ and co-value space $V^* := (\Reals^G)^*$ are both isomorphic to $\Reals^M$, on account of the finite labeling of the goods.
				
%				Next, a quantity array $a : I \to V$ assigns a basket of goods $v(i)$ to each individual $i$. A price co-array $\varphi : 2^I \to V^*$ assigns a price co-vector $\varphi(B)$ to each \emph{set} of individuals $B \subseteq I$. The space of quantity arrays is $V^I = C(I, V)$ and the space of price co-arrays is $(V^I)^* \cong M_\cs(I, V^*)$. Since $I$ and $G$ are both finite, both spaces $V^I$ and $(V^I)^*$ are isomorphic to $(\Reals^M)^N \cong \Reals^{MN}$: we can represent both quantity arrays and price co-arrays using tuples of $MN$ real numbers. We use the notation $a^{mn}$ to denote the number of good $g_m$ possessed by individual $i_n$. Similarly, we write $\varphi_{mn}$ for the price of good $g_m$ for individual $i_n$. In this case, the \emph{valuation} of the economy is the product $\varphi[a] = \sum_n \sum_m \varphi_{mn} a^{mn}$. 

%			\end{exa}
			
		\subsection{Random Arrays} \label{sect_randarrays}
		
		Let $\P$ be a probability measure on $V^I = C(I,V)$, representing the distribution of a random associative array, and let $\E$ denote its expectation operator. Co-arrays are continuous linear functionals, hence measurable functions $V \to \S$. The mean of an arbitrary co-array $\varphi \in (V^I)^*$ is the scalar quantity $\E[\varphi] = \int_{V^I} \varphi[a] \, \P(\d a) = \int_{V^I} \int_I \varphi(\d i) a(i) \, \P(\d a)$. The covariance is a (pre-)inner product on co-array space, and is defined by 
			\begin{eqnarray}
				\cov[\psi|\varphi] = \E[\bar{\psi} \varphi] - \E[\bar{\psi}] \E[\varphi] &=& \int_{V^I} \bar{\psi[a]} \varphi[a]  \, \P(\d a) - \E[\bar \psi] \E[\varphi] \label{def_covinnerprod1} \\
				&=& \int_{V^I} \left( \int_I \bar \psi(\d i)  \bar{a(i)}  \right) \left( \int_I \varphi(\d i')  a(i') \right) \, \P(\d a) - \E[\bar \psi] \E[\varphi] \label{def_covinnerprod2}
			\end{eqnarray}
		for all co-arrays $\psi, \varphi$.\footnote{Complex conjugation may be ignored when working with real scalars.} We assume throughout that $\P$ satisfies the finite-variance assumption, which means that the covariance $\cov[\psi|\varphi] < \oo$ for all co-arrays.\footnote{Since $I$ and $V$ are both separable, the space $C(I,V)$ is readily seen to be separable \cite{reed1972methods}. Therefore the separable-support hypothesis is automatically satisfied.} The mean is represented by an array $m \in V^I$, which satisfies the identity $\E[\varphi] = \varphi[m] = \int_I \varphi(\d i) m(i)$ for all co-arrays $\varphi$.

		\subsection{Covariance Functions (Scalar-Valued Case)} \label{sect_covfuns}
		
		It is useful to first illustrate the covariance representation in the scalar-valued case. Suppose that $V = \S = V^*$, with array space $\S^I = C(I,\S)$ and co-array space $(\S^I)^* = M_\cs(I,\S)$. Let $\delta_i \in (\S^I)^*$ denote the Dirac point-mass, which assigns weight $1 \in \S$ to index point $i \in I$.\footnote{Formally, $\delta_i(B) := 1$ if $i \in B$ and $:=0$ if $i \notin B$.} As a co-array, the Dirac point-mass acts by evaluation: $\delta_i[a] = a(i)$. The Dirac $\delta$-function is the continuous embedding $\delta : I \hookrightarrow (\S^I)^*$, which sends a point $i$ to its Dirac point-mass $\delta_i$. 
		
		The finite-variance assumption assures that the covariance inner product is well-defined, and we define the covariance function $c(i,i') := \cov\big[ \delta_i \big| \delta_{i'} \big]$.\footnote{In practice, the covariance function is typically provided explicitly, and encodes knowledge about the data to be observed (e.g., qualitative features like smoothness, or quantitative features like similarity degrees).}

		Using a covariance function, we may explicitly construct the covariance structure. Let $k : (\S^I)^* \to V$ denote the integral operator with kernel $c$, defined by $(k\varphi)(i) := \int_I \varphi(\d i') c(i,i')$.\footnote{Theorem 3.ii of \cite{vakhania1978covariance} ensures that the operator $k$ is continuous.} Define the one-point kernel functions $c_i(i') := c(i,i')$, and observe that $c_i = k(\delta_i)$ for each $i$. The embedded Cameron-Martin space $U \subseteq C(I, \S)$ is the Hilbert-closure of the span of $\{c_i\}$; its existence is justified in this case by the Moore-Aronszajn theorem \cite{berlinet2004reproducing}, or in general by our Lemma \ref{lem_threespace}.
		
		The following commutative diagram summarizes the relevant structure in the scalar-valued case:
			\begin{equation} \label{dia_covrepscalar}
						\begin{matrix} \xymatrix{ & (\S^I)^* \ar@{->}[rr]^k \ar@{->}[rd]^k && \S^I \\ I \ar@{^(->}[ur]^{\delta} \ar@{-->}[rr]_{k \circ \delta} && U \ar@{^(->}[ur] & } \end{matrix} \end{equation}
		Note that the probability measure $\P$ was not necessary for this construction: only the covariance kernel $c$ (and its integral operator $k$).
		
		The \emph{kernel trick} in machine learning is represented by the map $k \circ \delta : I \to U$, which allows us to compute geometric quantities in Hilbert space using only the covariance function. An important geometric invariance is the covariance metric on $I$, which is a continuous function $t_c : I \times I \to [0,\oo)$, and is defined by $t_c(i,i') := c(i,i) - 2\Re c(i,i') + c(i',i')$.\footnote{Note: $t_c$ is actually a pseudo-metric, and $t_c(i,i') = 0$ if and only if data at $i$ and $i'$ are maximally correlated.}

			\begin{rem}[Almost-Sure Continuity] \label{rem_finent}
				We assumed explicitly that $\P$ is a probability measure on $C(I,\S)$, which corresponds to the statistical hypothesis that random arrays are continuous almost surely. Dudley \cite{dudley2010sample} discovered that a.s.-continuity corresponds to a certain ``integrated entropy'' condition being satisfied; modern proofs are due to Talagrand \cite{talagrand1996majorizing}.
				
				Let $D$ be an arbitrary compact subset of $I$. For any $\epsilon > 0$, let $N_D(\epsilon)$ be the minimum number of $\epsilon$-balls needed to cover $D$ using the covariance metric $t_c$. Since $D$ is compact, $N_D(\epsilon)$ is finite for all $\epsilon$. Define the covariance entropy function $\Ent_D(\epsilon) := \log N_D(\epsilon)$, which is a measure of statistical complexity of data over the index set $D$. Define the integrated entropy $\IntEnt_D := \int_0^\oo \Ent_D(\epsilon) \, \d \epsilon$. Theorem 1.3.3 of \cite{adler2009random} ensures that a random array is almost-surely continuous if and only if $\IntEnt_D < \oo$ for all compact $D \subseteq I$.\footnote{Note: if $I$ is not compact, then it is possible for $\IntEnt_c^I = \oo$ but $\IntEnt_D < \oo$ for all compact $D \subseteq I$.}
				
				If $\IntEnt_D = \oo$, then a random array is almost surely discontinuous on $D$. In this case, no probability measure with covariance $c$ may be supported on the set $C(I, \S)$, and a different function space should be chosen.
			\end{rem}
			
		\subsection{Covariance Tensors (Vector-Valued Case)} \label{sect_covtens}
			
		The content of the next theorem is that we may represent the covariance kernel as a \emph{covariance tensor} $c : I \times I \to V \otimes_\cov V$. In infinite dimensions, tensor products are not unique, so this construction requires more care to define.\footnote{Covariance tensors generalize the scalar-valued setting, since $\S \otimes \S \cong \S$.} For an application of covariance tensors in random differential geometry, see \cite{lagatta2014geodesics}.\footnote{In \cite{lagatta2014geodesics}, index space is $I = \R^d$, and value space is $\operatorname{Sym}_d$, the space of $d \times d$ symmetric matrices. Array space $C(\R^d, \operatorname{Sym}_d)$ is used to represent random tensor fields on $\R^d$, when are then used to construct random Riemannian metrics. A similar construction is available for a general $d$-dimensional smooth manifold $I$, which has applications for kriging in general relativity.}

			\begin{rem}[Background:~Tensor Products]
				Recall that a tensor product $V \otimes V$ consists of all linear combinations of elements of the form $v \otimes v'$, closed under a suitable topology.\footnote{Addition of tensors is straightforward. Scalar multiplication of tensor products satisfies $s(v \otimes v') = (sv) \otimes v' = v \otimes (sv')$, contrary to the cartesian product, since $s(v,v') = (sv,sv')$.} When $V$ is finite dimensional, there is a unique tensor product, and $V \otimes V \cong V^{(\dim V)^2}$. When $V$ is infinite dimensional, it admits multiple types of tensor products, ranging from the projective tensor product $V \otimes_{\operatorname{proj}} V$ and the injective tensor product $V \otimes_{\operatorname{inj}} V$ \cite{ryan2002introduction}. A space is nuclear if and only if it has a unique tensor product.\footnote{Nuclearity of $V$ corresponds to the map $V \otimes_{\operatorname{proj}} V \to V \otimes_{\operatorname{inj}} V$ being an isomorphism. All finite-dimensional spaces are nuclear. There are no infinite-dimensional nuclear Banach spaces, though there exist nuclear Fr\'echet spaces.} If $H$ is a Hilbert space, there is a tensor product $H \otimes_{\Hilb} H$ which is itself a Hilbert space.\footnote{Infinite-dimensional Hilbert space is not nuclear.}
%				\footnote{By definition, $V$ is a nuclear space if and only if the tensor product $V \otimes V$ is uniquely defined. All finite-dimensional spaces are nuclear. There is no infinite-dimensional nuclear Banach space. Many Fr\'echet spaces are nuclear.} 
			\end{rem}
			
		We start by mimicking the scalar-valued construction in Section \ref{sect_covfuns}. First, define the extended Dirac point-mass $e\dash\delta_i$, which assigns weight $e \in V^*$ to index point $i \in I$.\footnote{Each $e\dash\delta_i$ is a $V^*$-valued measure. Formally, $e\dash\delta_{i}(B) := e$ if $i \in B$, and $:= 0$ if $i \notin B$. Note that $0\dash\delta_i = 0$ for all $i$.} Parametrization by $e$ is necessary since there may not exist a canonical unit $1 \in V^*$ (contrary to the scalar case).\footnote{When there is a unit (e.g., if $V^*$ is an algebra), then $1\dash\delta_i = \delta_i$ for all $i$.} As a co-array, the extended point-mass $e\dash\delta_{i}$ acts on an array $a$ by evaluation: $e\dash\delta_{i}[a] := e\big[ a(i) \big]$. In this setting, the Dirac $\delta$-function is a continuous embedding $\delta : V^* \times I \hookrightarrow (V^I)^*$.\footnote{By separability, the span of Dirac point-masses is dense in co-array space.} The covariance tensor is encoded by the scalar-valued function $c(e,i; e', i') := \cov\big[ e\dash\delta_i \big| e'\dash\delta_{i'} \big]$.\footnote{The covariance satisfies the scaling $c(se,i; e',i') = \bar s\cdot c(e,i; e',i') = c(e,i; \bar se',i')$ for all $s \in \S$.}
		
		Theorem \ref{thm_covrep} demonstrates that we may alternatively represent the covariance kernel as a covariance tensor $c : I \times I \to V \otimes_\cov V$, for a suitably chosen tensor product $V \otimes_\cov V$. The covariance operator $k : (V^I)^* \to V^I$ is defined by integrating against the tensor-valued covariance kernel: $(k \varphi)(i) := \int_I \varphi(\d i') c(i,i')$, where the action of $\varphi(\d i')$ is on the second tensor component of $c(i,i')$. The embedded Cameron-Martin space $U$ is the Hilbert closure of the one-point kernel functions $c_{e'i'}(i) := \big(k(e'\dash\delta_{i'})\big)(i) = e'\big[ c(i,i') \big]$, where the evaluation by $e'$ is on the second tensor component.\footnote{In the special case that $c(i,i') = \sum_n \alpha_n v_n(i) \otimes v'_n(i')$, then $(k\varphi)(i) = \int_I \varphi(\d i') c(i,i') = \sum_n \left( \alpha_n \int_I \varphi(\d i') v'_n(i) \right) v_n(i)$. In that case, the one-point functions satisfy $c_{e'i'}(i) = \sum_n \alpha_n v_n(i) \otimes e'\big[ v'_n(i') \big]$.} As in the scalar-valued setting, the space $U$ is the reproducing kernel Hilbert space (RKHS) for the kernel $c$.
				
		\begin{thm}[Covariance Kernel Representation Theorem] \label{thm_covrep}
				Let $\P$ be a finite-variance probability measure on $V^I = C(I,V)$. There exists a tensor product $V \otimes_\cov V$ and a (continuous) function $c : I \times I \to V \otimes_\cov V$ which is symmetric, non-negative-definite, and satisfies the relation $\cov[e\dash\delta_i | e'\dash\delta_{i'}] = (e \otimes_\cov e')\big[ c(i,i') \big]$ for all $e,e' \in V^*$ and $i,i' \in I$.
					
										%\begin{matrix} \xymatrix{ & (V^I)^* \ar@{->}[rr]^k \ar@{->>}[rd]_{\iota^*} && V^I \\ I \times V^* \ar@{^(->}[ur]^{\delta} \ar@{-->}[rr]_{\iota^* \circ \delta} \ar@{-->}[rrru]^{k \circ \delta} && H \ar@{^(->}[ur]_\iota & } \end{matrix} \end{equation}
			\end{thm}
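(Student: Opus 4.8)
The plan is to build the covariance tensor directly out of the Hilbert-space factorization of the covariance operator supplied by Lemma~\ref{lem_threespace}, and to let the tensor product $V \otimes_\cov V$ be precisely the completion of the algebraic tensor product in which this object naturally lives. Concretely, first I would \emph{define} $c(i,i')$ to be the covariance bilinear form $(e,e') \mapsto \cov[e\dash\delta_i \mid e'\dash\delta_{i'}]$ on $V^* \times V^*$. With this definition the three algebraic requirements are immediate: symmetry is conjugate-symmetry of the covariance pre-inner product; nonnegative-definiteness of the kernel is the inequality $\sum_{m,n} \bar s_m s_n \cov[e_m\dash\delta_{i_m} \mid e_n\dash\delta_{i_n}] = \var\!\big( \sum_n s_n e_n\dash\delta_{i_n} \big) \ge 0$, valid because $\sum_n s_n e_n\dash\delta_{i_n}$ is a genuine co-array (a finite combination of extended Diracs); and the pairing relation $(e \otimes_\cov e')[c(i,i')] = \cov[e\dash\delta_i \mid e'\dash\delta_{i'}]$ holds by fiat once $e \otimes_\cov e'$ is read as the evaluation-at-$(e,e')$ functional. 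So the genuine content is twofold: (i) that this bilinear form really lies in a \emph{tensor product} of $V$ with itself, i.e.\ is a limit of finite sums of elementary tensors $v \otimes v'$; and (ii) that $(i,i') \mapsto c(i,i')$ is continuous.

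For (i), apply Lemma~\ref{lem_threespace} to $\P$ on $V^I = C(I,V)$ to obtain the separable Hilbert space $H$, the dense map $\iota^* : (V^I)^* \twoheadrightarrow H$, the injection $\iota : H \hookrightarrow V^I$ with $k = \iota \circ \iota^*$, and the adjointness $\cov[\iota^* \varphi \mid h] = \bar\varphi[\iota h]$. For each $i \in I$ let $\rho_i : H \to V$ be the evaluation $\rho_i(h) := (\iota h)(i)$, which is continuous since $\iota$ maps continuously into the compact-open topology and point evaluation on $C(I,V)$ is continuous; the adjointness identity then gives $\iota^*(e\dash\delta_i) = \rho_i^{*}(e)$ and hence $\cov[e\dash\delta_i \mid e'\dash\delta_{i'}] = \langle \rho_i^{*} e,\, \rho_{i'}^{*} e' \rangle_H$. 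Fix an orthonormal basis $\{h_n\}$ of $H$ (separability is automatic here, as $C(I,V)$ is separable) and set $g_n := \iota(h_n) \in C(I,V)$. Since $\langle \rho_i^* e, h_n\rangle_H = e[g_n(i)]$, Parseval's identity yields $\cov[e\dash\delta_i \mid e'\dash\delta_{i'}] = \sum_n e[g_n(i)]\,\overline{e'[g_n(i')]}$, the series converging absolutely because $\sum_n |e[g_n(i)]|^2 = \|\rho_i^* e\|_H^2 = \var(e\dash\delta_i) < \oo$ by the finite-variance hypothesis (and similarly at $i'$). Now define $V \otimes_\cov V$ to be the closure of the algebraic tensor product $V \otimes V$ inside the space of bilinear forms on $V^* \times V^*$ equipped with the topology of pointwise convergence; the canonical map $V \otimes V \to V \otimes_\cov V$ is injective because $V^*$ separates the points of $V$, so this is honestly a tensor product in the sense of the article, and it collapses to $\S$ when $V = \S$. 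The partial sums $\sum_{n \le N} g_n(i) \otimes g_n(i')$ lie in $V \otimes V$ and, by the identity above, converge pointwise on $V^* \times V^*$ to $c(i,i')$; hence $c(i,i') \in V \otimes_\cov V$, and the representation $(k\varphi)(i) = \int_I \varphi(\d i')\,c(i,i')$ falls out by pairing with $\iota^*$ as well.

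The main obstacle is (ii), continuity. Unwinding the topology on $V \otimes_\cov V$, it suffices to prove that $(i,i') \mapsto \cov[e\dash\delta_i \mid e'\dash\delta_{i'}]$ is continuous on $I \times I$ for each fixed $e,e'$, and by Cauchy--Schwarz in $H$ this follows once $i \mapsto \iota^*(e\dash\delta_i)$ is continuous into $H$ with its norm topology. Weak continuity of $i \mapsto \iota^*(e\dash\delta_i)$ is free: $\langle \iota^*(e\dash\delta_i), h\rangle_H = \overline{e[(\iota h)(i)]}$ is continuous in $i$ because $\iota h \in C(I,V)$. Since in a Hilbert space weak convergence of a net together with convergence of its norms upgrades to norm convergence, the whole matter reduces to continuity of $i \mapsto \|\iota^*(e\dash\delta_i)\|_H^2 = \var(e\dash\delta_i)$, equivalently to continuity of the covariance pseudo-metric $t_c$ on $I$. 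This is exactly the point where the hypothesis that $\P$ genuinely lives on the continuous-function space $C(I,V)$ (rather than on a larger space of discontinuous arrays) must be used. I would argue that the finite-variance hypothesis, applied not merely to single extended Diracs but to all compactly-supported co-arrays concentrated along a convergent net $i_\alpha \to i$, forces — by a uniform-boundedness argument — that $\sup_\alpha \var(e\dash\delta_{i_\alpha}) < \oo$ and that the family $\{\,|e\dash\delta_{i_\alpha}|^2\,\}$ is uniformly $\P$-integrable; combined with the equicontinuity of $\P$-compact subsets of $C(I,V)$ (Arzel\`a--Ascoli) and the Radonness of $\P$, a dominated-convergence argument then gives $\var(e\dash\delta_{i_\alpha} - e\dash\delta_i) \to 0$, hence continuity of $t_c$ and of $c$. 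Reconciling ``pointwise finite variance'' with ``locally uniform $L^2$ control'' under only the stated topological assumptions is the delicate step; the remainder is bookkeeping with the diagram of Lemma~\ref{lem_threespace}.
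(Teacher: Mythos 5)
Your construction follows essentially the same route as the paper's: both define $V \otimes_\cov V$ by a weak-type topology in which elementary tensors are separated by the functionals $e \otimes e'$, and both obtain $c(i,i')$ as the object representing the form $(e,e') \mapsto \cov[e\dash\delta_i \mid e'\dash\delta_{i'}]$. Where you improve on the paper is the existence step: the paper's proof asserts that ``a tensor is characterized by the action of all tensor-functionals on it'' and concludes that $c(i,i')$ exists, which by itself does not produce an element of (the completion of) $V \otimes V$; your Parseval expansion $c(i,i') = \lim_N \sum_{n \le N} g_n(i) \otimes g_n(i')$ with $g_n := \iota(h_n)$ actually exhibits $c(i,i')$ as a limit of finite-rank tensors, which is exactly the content the paper leaves implicit. (One bookkeeping point: with the paper's convention that $\cov$ is anti-linear in its first argument, the first tensor slot is conjugate-linear, so over $\C$ the form really lives in $\bar{V} \otimes_\cov V$; over $\Reals$ this is invisible.)

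The one place your argument is genuinely incomplete is continuity, and you correctly flag it. The reduction to norm-continuity of $i \mapsto \iota^*(e\dash\delta_i)$ in $H$, and from there to continuity of $i \mapsto \var(e\dash\delta_i)$, is sound (lower semicontinuity is free from Fatou; the issue is the upper bound). But your closing paragraph is a plan, not a proof: you do not actually establish the uniform integrability of $\{|e\dash\delta_{i_\alpha}|^2\}$ along a convergent net $i_\alpha \to i$. This is not a triviality --- finiteness of $\var(e\dash\delta_i)$ at each individual index does not prevent $\var(e\dash\delta_{i_\alpha})$ from blowing up along a net of a.s.-continuous fields; the obstruction is only ruled out by invoking finite variance for \emph{all} compactly supported co-arrays, e.g.\ via a closed-graph or uniform-boundedness argument for $\iota^*$ restricted to $M_\cs(D,V^*)$ for a compact neighborhood $D \ni i$, using continuity of $k$. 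To be fair, the paper's own proof is no better here: it asserts that continuity ``is assured by joint continuity of $(i,e;i',e') \mapsto k_{ei} \otimes_\Hilb k_{e'i'}$,'' which is precisely the statement in question and is likewise left unproven. So your proposal matches the paper on its weakest step and is strictly more careful on the existence step; the continuity claim remains open in both write-ups as stated.
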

			
			The proof of Theorem \ref{thm_covrep} can be found in Appendix \ref{thm_covrepproof}. \newline

				The following commutative diagram summarizes the relevant structure in the vector-valued case:
					\begin{equation} \label{dia_covrep}
						\begin{matrix} \xymatrix{ & (V^I)^* \ar@{->}[rr]^k \ar@{->}[rd]^k && V^I \\ V^* \times I \ar@{^(->}[ur]^{\delta} \ar@{-->}[rr]_{k \circ \delta} && U \ar@{^(->}[ur] & } \end{matrix} \end{equation}

		The map $k \circ \delta : V^* \times I \to U$ again enables the kernel trick: geometric quantities in Hilbert space $U$ can be computed using either the scalar-valued kernel $c(e,i; e',i')$ or the tensor-valued kernel $c(i,i')$. %Almost-sure continuity of random $V$-valued arrays may be studied (as in Remark \rem{rem_finent}), but the argument requires more care in general.

		%As in the scalar-valued case, define the covariance metric by 

			\begin{rem}[Almost-Sure Continuity]
				In the vector-valued case, almost-sure continuity can still be studied using covariance metrics, though now the construction is more subtle. Using the kernel trick, the covariance metric on $U$ pulls back to a covariance (pseudo-)metric, defined by $t_c(e,i; e',i') := c(e,i; e,i) - 2 \Re c(e,i; e',i') + c(e',i'; e',i')$. Let $\f : I \to V^*$ denote an arbitrary $V^*$-valued array (note: $\f$ is not a co-array), and define the pullback metric $t_c^\f(i,i') := t_c\big( \f(i), i; \f(i'), i' \big)$.\footnote{Under the covariance scaling, we have that $t_c(re, i; e', i') = r \cdot t_c(e,i; e', i') = t_c(e,i; r e', i')$ for all \emph{real} scalars $r \in \Reals \subseteq \S$. For the pulled-back metrics, the scaling satisfies $t_c^{r \f}(i,i') = t_c\big( r \f(i), i; r \f(i'), i' \big) = r^2 \cdot t_c^\f(i,i')$.} For any compact $D \subseteq I$, let $N_D^\f(\epsilon)$ denote the minimum number of $\epsilon$-balls needed to cover $D$ under the metric $t_c^\f$. Define the entropy function $\Ent_D^\f(\epsilon) := \log N_D^\f(\epsilon)$, and Talagrand's integrated entropy $\IntEnt_D^\f := \int_0^\oo \Ent_D^\f(\epsilon) \, \d \epsilon$. If $\IntEnt_D^\f < \oo$ for all compact $D \subseteq I$ \emph{and} all continuous $\f : I \to V^*$, then random $V$-valued arrays are continuous with probability one.
			\end{rem}
			
		\subsection{OLS Estimator for Array Transforms} \label{sect_transforming}
		
		We now formulate a general theorem about transforming data arrays in bulk. Using the OLS estimator, we may predict the source array given the output of the transformation. Suppose that $D$ and $I$ are any index spaces, and $V$ and $W$ are any value spaces. Let $i : D \to I$ be a continuous map, and let $w : V \to W$ be a continuous linear map. Consider the array spaces $V^I = C(I,V)$ and $W^D = C(D,W)$. There is a natural map $\Mech : V^I \to W^D$ defined by $\Mech(a)(d) := w\big( a\big( i(d) \big) \big)$.\footnote{The function $\Mech$ can be computationally implemented using the higher-order ``map'' function, applying the function $w$ index-wise to the array $a \circ i \in V^D$. In Clojure code, \texttt{(defn Upsilon [a] (map w (comp a i)))}.} Since $\Mech$ is continuous and linear, the structure theory of Section \ref{sect_structmaps} applies.\footnote{For the kriging example of Corollary \ref{cor_kriging}, $i : D \hookrightarrow I$ is the subset inclusion map, and $w : V \to V$ is the identity transformation. The restriction map $\Mech : V^I \to V^D$ is defined by $\Mech(a)(d) := a(d)$.}
		
		Assume that $\P$ is a probability measure on $V^I$, describing the law of a random array, and let $c : I \times I \to V \otimes_\cov V$ be its covariance kernel (as defined in Section \ref{sect_covtens}). Let $\Mech^* : (W^D)^* \to (V^I)^*$ denote the adjoint operator, which acts on co-arrays.\footnote{The adjoint $\Mech^*$ is defined by sending a co-array $\nu \in (W^D)^*$ to a certain co-array $\Mech^*\nu \in (V^I)^*$, which satisfies the identity $\int_I (\Mech^* \nu)(\d i) a(i) = \int_D \nu(\d d) (\Mech a)(d) = int_D \nu(\d d) w\big( a\big( i(d) \big) \big)$ for all arrays $a \in V^I$.} Let $k : (V^I)^* \to V^I$ be the covariance operator (i.e., the integral operator of $c$). Let $\P_\Mech := \P \circ \Mech^{-1}$ denote the push-forward measure on $W^D$, describing the outcome of a random sample of $\Mech$.\footnote{i.e., if $a$ is a random $V$-valued array with law $\P$, then $\P_\Mech$ is the law of the random $W$-valued array $\Mech(a)$.} It is straightforward to see that the covariance tensor of $\P_\Mech$ is a continuous function $c_D : D \times D \to W \otimes_\cov W$, defined by $c_D(d,d') := \big( w \otimes_\cov w \big)\big( c\big( i(d), i(d') \big) \big)$.\footnote{If the covariance admits the representation $c(i,i') = \sum \alpha_n v_n(i) \otimes v_n'(i')$, then $c_D(d,d') = \sum_n \alpha_n w\big( v_n\big(i(d)\big) \big) \otimes w\big( v_n' \big(i(d')\big) \big)$.} 
		
		Let $U$ be the Cameron-Martin space of $k$, which is generated by one-point functions $c_{ei} := e \big[ c(i,\cdot) \big]$, where the evaluation of $e \in V^*$ is on the first tensor component. Let $U_D$ be the Cameron-Martin space of $k_D$, which is generated by one-point functions $c_{D,fd} := f \big[ c_D(d, \cdot) \big] := f \big[ (w \otimes_\cov w)\big( c\big( i(d), i(\cdot)\big) \big) \big]$, where the evaluation of $f \in W^*$ is on the first tensor component. 		
		
		By the Main Theorem (Theorem \ref{thm_iso}), the restricted data mapping $\Mech : U \to U_D$ corresponds to orthogonal projection onto the subspace $\hat{U_D}$.\footnote{The space $\hat{U_D}$ is generated by the one-point kernel functions $\hat c_{ed} := e \big[ c\big( i(d), \cdot \big) \big]$, where the evaluation of $e \in V^*$ is on the first tensor component and $d$ ranges over $D$.} Consequently, the map $\Mech : \hat U_D \to U_D$ is an isomorphism of Hilbert spaces, and therefore the inverse map $\Mech^{-1} : U_D \to \hat U_D$ is well-defined and continuous. Assuming that $c$ is strictly positive-definite, the OLS estimator $\hat \Mech_\OLS : W^D \to V^I$ is well-defined and continuous on the maximum possible domain.\footnote{For a parametrized model $\theta \mapsto \P_\theta$, if $W$ is a Banach space and $D$ is compact, then Lemma \ref{lem_ctyshort} ensures that $(\theta,y) \mapsto \hat \Mech_{\OLS,\theta}(y)$ is jointly continuous.}
		
			\begin{thm}[OLS Estimator for General Array Transforms] \label{thm_OLSarrays}
				Suppose that $c$ is strictly positive-definite, and suppose that $w : V \to W$ has dense image. The Ordinary Least Squares estimator $\hat \Mech : W^D \to V^I$ is well-defined and continuous on the maximal possible domain $W^D$.\footnote{If $c$ is not strictly positive-definite or $w$ is not dense, then the domain of the OLS estimator is the closed subspace $\bar{U_D} \subseteq W^D$.} The OLS estimator is a ``best linear unbiased estimator'' (BLUE). 
			\end{thm}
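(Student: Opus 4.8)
The plan is to read Theorem~\ref{thm_OLSarrays} as nothing more than a specialization of the general structure theory to the array-transform map $\Mech : V^I \to W^D$, $\Mech(a)(d) = w(a(i(d)))$, so that almost everything is inherited from Theorem~\ref{thm_OLS} and Theorem~\ref{thm_GMT} and only the identification of the domain requires genuine work. First I would check that the standing hypotheses of Section~\ref{sect_structmaps} hold: $V^I = C(I,V)$ and $W^D = C(D,W)$ are locally convex, complete and Hausdorff with point-separating duals (the co-array spaces $M_\cs(I,V^*)$ and $M_\cs(D,W^*)$), they are separable because $I,D,V,W$ are, and $\Mech$ is continuous and linear. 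Then $\P$ and its push-forward $\P_\Mech$ carry covariance structures represented by the tensor kernels $c$ and $c_D(d,d') = (w \otimes_\cov w)(c(i(d),i(d')))$, and Lemma~\ref{lem_threespace}, Lemma~\ref{lem_twospaces} and the Main Theorem (Theorem~\ref{thm_iso}) produce the embedded Cameron--Martin space $U \subseteq V^I$, the space $U_D \subseteq W^D$, the lifted space $\hat U_D \subseteq U$, and the Hilbert-space isomorphism $\Mech : \hat U_D \to U_D$ with continuous inverse $\Mech^{-1} : U_D \to \hat U_D$. Theorem~\ref{thm_OLS} then gives that the continuous linear extension of $\Mech^{-1}$ to $\bar{U_D}$ is a well-defined continuous linear estimator, and Theorem~\ref{thm_GMT} that it is a best linear unbiased estimator. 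This already proves the statement with ``$W^D$'' replaced by ``$\bar{U_D}$'', which is exactly the footnoted general case.

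It then remains to show that strict positive-definiteness of $c$ together with density of $w(V)$ in $W$ force $\bar{U_D} = W^D$. By the Main Theorem the restriction $\Mech : U \to U_D$ is onto, so $U_D = \Mech(U)$; and strict positive-definiteness of $c$ guarantees, as recalled in the introduction, that the RKHS $U$ is dense in $V^I$. Since $\Mech$ is continuous, $\Mech(V^I) = \Mech(\bar U) \subseteq \overline{\Mech(U)} = \bar{U_D}$, so $\bar{U_D} = \overline{\Mech(V^I)}$ and the task reduces to showing that $\Mech$ has dense image in $W^D$ --- equivalently, by Hahn--Banach, that the adjoint co-array map $\Mech^*$ is injective. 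Factoring $\Mech$ as ``precompose with $i$'' followed by ``apply $w$ pointwise'', the adjoint factors accordingly; density of $w(V)$ in $W$ makes $w^* : W^* \to V^*$ injective, so $\Mech^*$ is injective as soon as $i$ is injective (on each compact subset of $D$ one extends by Tietze), the prototype being the subset inclusion of Corollary~\ref{cor_kriging}. Equivalently, one can argue directly that the transformed kernel $c_D$ inherits strict positive-definiteness from $c$ --- using injectivity of $w^*$ and of $i$ --- and then invoke the same RKHS density criterion for $c_D$.

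The step I expect to be the main obstacle is precisely this density transfer from $V^I$ through $\Mech$ to $W^D$. Proving concretely that $\Mech$ has dense image amounts to: given a target array $g \in C(D,W)$, a compact $K \subseteq D$ and a basic compact-open neighborhood of $g$, first using density of $w(V)$ in $W$ (together with local compactness of $D$ and a partition of unity) to produce a continuous $v : D \to V$ with $w \circ v$ inside that neighborhood on $K$, and then extending $v$ to an array $a \in C(I,V)$ with $a \circ i = v$ by a Tietze/Dugundji-type extension theorem for the locally convex target $V$, so that $\Mech(a) = w \circ v$ approximates $g$ on $K$. Making the extension step rigorous is the delicate point: it requires mild regularity of the index map $i$ (e.g.\ that $i$ be a homeomorphism onto a closed subset of $I$, as in Corollary~\ref{cor_kriging}) and some normality of $I$; without such hypotheses the image of $\Mech$ is confined to the proper closed subspace of arrays that are constant on the fibres of $i$, and then $\hat\Mech_\OLS$ is only defined on the proper closed subspace $\bar{U_D}$, consistent with the footnote. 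Everything else --- that $\hat\Mech_\OLS$ is a genuine estimator, linear, continuous, and BLUE --- is then just a matter of quoting Theorems~\ref{thm_OLS} and~\ref{thm_GMT}.
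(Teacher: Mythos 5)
Your proposal is correct and, at the top level, follows the same route as the paper: both reduce the theorem to Theorem~\ref{thm_OLS} (well-definedness, continuity, linearity on $\bar{U_D}$) and Theorem~\ref{thm_GMT} (BLUE), leaving only the identification $\bar{U_D} = W^D$ as new content. Where you diverge is in how that density claim is handled. The paper disposes of it in one line: strict positive-definiteness of $c$ gives $U$ dense in $V^I$, and density of $w(V)$ in $W$ is asserted to make $c_D$ strictly positive-definite, hence $U_D$ dense in $W^D$. You instead observe that $\bar{U_D} = \overline{\Mech(V^I)}$ and convert the problem into showing that $\Mech$ has dense image, which you attack either dually (injectivity of $\Mech^*$ on co-arrays) or constructively (approximate in $W$ using density of $w(V)$, then extend from $i(D)$ to $I$ by a Tietze/Dugundji argument). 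Your extra care buys something real: it exposes that the conclusion genuinely requires regularity of the index map $i$ --- at minimum injectivity, and in practice that $i$ be a closed embedding as in Corollary~\ref{cor_kriging} --- since otherwise a co-array such as $f\dash\delta_d - f\dash\delta_{d'}$ with $i(d)=i(d')$ is annihilated by $\Mech^*$, $c_D$ fails to be strictly positive-definite, and the image of $\Mech$ sits inside the proper closed subspace of arrays constant on the fibres of $i$. The paper's statement hypothesizes only that $i$ is continuous, so its one-line claim ``$w$ dense $\Rightarrow c_D$ strictly positive definite'' silently uses exactly the hypothesis you flag. In short: your proof is sound under the hypotheses you identify, and the discrepancy you found is a defect of the paper's statement and proof, not of your argument.
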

			\begin{proof}
				By Theorem \ref{thm_OLS}, the OLS estimator is a well-defined, continuous linear operator. Since $c$ is strictly positive-definite, the Cameron-Martin space $U$ is dense in $V^I$. Since $w$ is dense, $c_D$ is strictly positive definite, hence $U_D$ is dense in $W^D$. Thus the maximal domain of the OLS estimator is $\bar{U_D} = W^D$. The OLS estimator is a BLUE by virtue of the Gauss-Markov theorem (Theorem \ref{thm_GMT}).
			\end{proof}
		
		Define the (extended) Dirac $\delta$-functions $\delta : V^* \times I \to (V^I)^*$ and $\delta_D : W^* \times D \to (W^D)^*$ as in Section \ref{sect_covtens}. Let $w^* \times i : W^* \times D \to V^* \times I$ denotes the cartesian product map.\footnote{i.e., $(w^* \times i)(f, d) := \big( w^* f, i(d) \big)$ for all $f \in W^*$ and $d \in D$.} The following commutative diagram encodes the full structure in this setting:
			\begin{equation} \label{dia_multirep}
					\begin{matrix} \xymatrix{ & (W^D)^* \ar@{->}[rr]^{\Mech^*} && (V^I)^* \ar@{->}[rr]^k \ar@{->}[rd]^k && V^I \ar@{->}@/^/[rr]^{\Mech} && W^D \ar@{_(-->}@/^/[ll]^{\hat \Mech_\OLS} \\ W^* \times D \ar@{^(->}[ur]^{\delta_D} \ar@{->}[rr]_{w^* \times i} && V^* \times I \ar@{^(->}[ur]^{\delta} \ar@{-->}[rr]_{k \circ \delta} && U \ar@{^(->}[ur] \ar@{->>}@/^/[rr]^{\Mech} && U_{D} \ar@{_(->}@/^/[ll]^{\Mech^{-1}} \ar@{^(->}[ur]  } \end{matrix} \end{equation}

		\section{The Problem of Classification and Support-Vector Machines} \label{sect_classification}
		
		Let $D_0, D_1$ be two disjoint (compact) subsets of index space $I$, representing observed data of two different label types. The problem of \emph{classification} is to find a suitable partition $L_0, L_1$ of $I$ given the training data $D_0, D_1$; a \emph{classifier} is the indicator function $\kappa(i) := 1_{L_1}(i)$.\footnote{i.e., $\kappa(i) = 1$ if $i \in L_1$, and $=0$ if $i \notin L_1$. The partition must be consistent with the data: $D_0 \subseteq L_0$, $D_1 \subseteq L_1$, $L_0 \cup L_1 = I$ and $L_0 \cap L_1 = \varnothing$.} Given a kernel function $c$, Cortes and Vapnik \cite{cortes1995support} designed the support-vector machines (SVM) algorithm to solve the classification problem on a finite index space $I$. 
		
		More generally, Hein, Bousquet and Sch\"olkopf \cite{hein2005maximal} present the SVM algorithm for a metric index space (and finite observation sets $D_0, D_1$). Their Theorem 5 demonstrates that the SVM classifier for $c$ is the maximum-margin classifier for the covariance metric $t_c(i,i') := c(i,i) - 2 \Re c(i,i') + c(i',i')$, and is therefore optimal. In Section \ref{sect_SVM}, we present a general version of the SVM classifier for arbitrary index spaces (allowing the kernel to vary) and compact observation sets $D_0, D_1$.\footnote{It is possible to extend the classifier to non-compact observation sets, but we do not do so here.}
		
			\begin{rem}
				The case of multiple labeled data sets $D_0, D_1, D_2, \cdots$ is typically solved by applying multiple binary SVM classifications. This case can also be solved by considering a Voronoi tessellation of $I$ with respect to the covariance metric $m_c$.
			\end{rem}
		
%		In Section \ref{sect_SVMOLS}, we use the OLS kriging predictor to present a extension of the SVM algorithm to the multiple-labels case.

%		, we use the OLS kriging predictor to present an extension of the SVM algorithm. 
		
		\subsection{The SVM Algorithm} \label{sect_SVM}
		
		We recall the formal setup from Section \ref{sect_covfuns}. Consider (scalar-valued) array space $\S^I := C(I, \S)$ and co-array space $(S^I)^* := M_\cs(I,\S)$. Let $c$ be a symmetric, non-negative-definite kernel on $I$, meaning that $c(i,i') = \bar{c(i',i)}$ for all $i,i' \in I$ and \eqref{def_nonneg} is satisfied. Let $k : (\S^I)^* \to \S^I$ be the integral operator with kernel $c$.\footnote{i.e., $(k\varphi)(i) = \int_I c(i,i') \, \varphi(\d i')$ for any co-array $\varphi \in (\S^I)^*$. Co-array space consists of compactly supported Radon measures on $I$.} The operator $k$ is known in the machine-learning literature as the \emph{mean map} \cite{muandet2012learning}. Let $U \subseteq \S^I$ be the reproducing kernel Hilbert space (RKHS) for $c$, i.e., the Hilbert closure of the one-point kernel functions $c_i := c(i,\cdot)$.\footnote{The dimension $\dim U + 1$ is called the Vapnik-Chervonenkis dimension of the support-vector machine, and represents the complexity of data classifiable.} The kernel trick $k \circ \delta : I \hookrightarrow U$ sends an index point $i$ to its one-point kernel function $c_i$, and is a continuous embedding of index space into Hilbert space.\footnote{In fact, $I$ is a metric space when equipped with the covariance metric, and the kernel trick is an isometric embedding of $I$ into $U$. By Proposition 4 of \cite{hein2005maximal}, only the metric $m_c$ is relevant for the SVM classifier, not the kernel $c$.}
		
		Since $D_0$ and $D_1$ are compact, and the kernel trick is continuous, the images $(k \circ \delta)D_0$ and $(k \circ \delta)D_1$ are compact subsets of Hilbert space. Define their respective closed convex hulls by $\hat D_0$ and $\hat D_1$, which are themselves compact.\footnote{The closed convex hull of a compact set is compact \cite[Theorem 5.35]{aliprantis2006infinite}.} 
		
		Suppose that the kernel is strictly positive-definite, so that the sets $\hat D_0$ and $\hat D_1$ are guaranteed to be linearly separable in $U$. Define their maximum-margin separation distance by $\rho := \| \hat D_0 - \hat D_1 \| := \min \| u_0 - u_1 \| > 0$, where the minimum is over pairs $(u_0,u_1) \in \hat D_0 \times \hat D_1$. Let $\Sigma := \big\{ (u_0, u_1) : \|u_0 - u_1 \| = \rho \big\}$ denote the set of solution pairs, which is non-empty by compactness. Define the two ``solution facets'' $\Sigma_0 \subseteq \hat D_0$ and $\Sigma_1 \subseteq \hat D_1$ by projecting $\Sigma$ onto its marginal components.

			\begin{lem}[Fundamental Separation Lemma] \label{lem_funsep}
			Suppose that $c$ is strictly positive-definite.
				\begin{enumerate}
				\item \label{lem_funsep1} The solution set $\Sigma$ is convex and non-empty.
				\item \label{lem_funsep2} There exists a unique vector $\w \in U$ (called the \emph{separation vector}) so that $u_1 = u_0 + \w$ for all solution pairs $(u_0, u_1) \in \Sigma$.\footnote{Consequently, $\Sigma \cong \Sigma_0 \cong \Sigma_1$, and $\Sigma_1 = \Sigma_0 + \w$.} The vector $\w$ has length $\rho$ in $U$.
				
				\item \label{lem_funsep3} There exist \emph{maximum-marginal hyperplanes} $M_0, M_1 \subseteq U$ which are orthogonal to the solution vector $\w$, are separated by Hilbert distance $\rho$, and separate $\hat D_0$ and $\hat D_1$.\footnote{i.e., for all $(d_0, d_1) \in \hat D_0 \times \hat D_1$ and $(m_0, m_1) \in M_0 \times M_1$, we have $\proj_\w(d_0 - m_0) \le 0 \le \proj_\w(d_1 - m_1)$. Since $\proj_\w(m_1 - m_0) = \rho$, it follows that $\proj_\w(d_1 - d_0) \ge \rho$. \label{foot_sep}} Each solution facet $\Sigma_\ell$ is a convex subset of the convex set $M_\ell \cap \hat D_\ell$, and $\Sigma \subseteq M_0 \times M_1$.\footnote{The solution facet $\Sigma_\ell$ is the projection of $\Sigma$ onto the $\ell$th component.}
					
				\item \label{lem_funsep4} The \emph{support-vector sets} $X_\ell := M_\ell \cap (k \circ \delta)D_\ell \subseteq U$ are non-empty and compact, and every solution vector $u_\ell \in \Sigma_\ell$ can be written as a convex combination of support vectors from $X_\ell$.\footnote{That is, for each solution $u_\ell$, there exists a probability measure $\p_\ell$ supported on $X_\ell$ such that $u_\ell = \int_{X_\ell} x_\ell \, \p_\ell(\d x_\ell).$ This is a Pettis integral, and is characterized by the fact that $\langle v, u_\ell \rangle = \int_{X_\ell} \langle v, x_\ell \rangle \, \p_\ell (\d x_\ell)$. } Every support vector $x_\ell = c(d_\ell,\cdot)$ is the one-point kernel function for some ``support point'' $d_\ell \in D_\ell$.
				
				\item \label{lem_funsep5} The separation vector $\w$ can be written as a \emph{unique} difference of convex combinations of support vectors. That is, there exist probability measures $\nu_0$ and $\nu_1$ supported on $X_0$ and $X_1$, respectively, so that
					\begin{equation} \label{eqn_wdec1}
						\w = \int_{X_1} x_1 \, \nu_1(\d x_1) - \int_{X_0} x_0 \, \nu_0(\d x_0), \end{equation}
				where the integrals are Pettis integrals.\footnote{Decomposition \eqref{eqn_wdec1} is characterized by the fact that $\langle v, \w \rangle = \int_{x_1} \langle v, x_1 \rangle \, \nu_1(\d x_1) - \int_{x_0} \langle v, x_0\rangle \, \nu_0(\d x_0)$ for any $v \in U$.} Furthermore, there exist probability measures $\tilde \nu_0$ and $\tilde \nu_1$ on the support-point sets $\tilde X_0 := (k \circ \delta)^{-1} X_0$ and $\tilde X_1 := (k \circ \delta)^{-1} X_1$, respectively, so that the array $\w \in U$ satisfies
					\begin{equation} \label{eqn_wdec2}
						\w(i) = \langle \delta_i, \w \rangle = \int_{\tilde X_1} c(i, d_1) \, \tilde \nu_1(\d d_1) - \int_{\tilde X_0} c(i, d_0) \, \tilde \nu_0(\d d_0) \end{equation}
					for all $i$. Therefore, if the combination measures $\tilde \nu_\ell$ are known, the solution array $i \mapsto \w(i)$ can be computed using solely the covariance kernel.
				\end{enumerate}
			\end{lem}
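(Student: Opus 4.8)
The plan is to recognize the Fundamental Separation Lemma as the classical geometry of the closest pair of points between two disjoint compact convex sets $\hat D_0,\hat D_1$ in the Hilbert space $U$, and to derive all five parts from the parallelogram law and the variational characterization of metric projection onto a closed convex set. Throughout I write $\Phi := k\o\delta : I \hra U$ for the kernel-trick embedding, so that $\hat D_\ell=\overline{\operatorname{conv}}\,\Phi(D_\ell)$, and I use the reproducing identity $\langle c_i,f\rangle_U=f(i)$ for $f\in U$, hence $\langle c_i,c_d\rangle_U=c(i,d)$. For \eqref{lem_funsep1}--\eqref{lem_funsep2}: the infimum $\rho$ is attained because $(u_0,u_1)\mapsto\|u_0-u_1\|$ is continuous on the compact set $\hat D_0\times\hat D_1$, so $\Sigma\ne\varnothing$, and $\rho>0$ since $\hat D_0,\hat D_1$ are disjoint (this is where strict positive-definiteness enters, as noted before the lemma). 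Given two solution pairs $(u_0,u_1),(u_0',u_1')$, I would apply the parallelogram law to $a:=u_1-u_0$ and $b:=u_1'-u_0'$: the vector $\tfrac12(a+b)=\tfrac{u_1+u_1'}{2}-\tfrac{u_0+u_0'}{2}$ is again the difference of a point of $\hat D_1$ and a point of $\hat D_0$, hence $\|\tfrac12(a+b)\|\ge\rho$, and combined with $\|a\|=\|b\|=\rho$ and $\|\tfrac12(a+b)\|^2+\|\tfrac12(a-b)\|^2=\tfrac12\|a\|^2+\tfrac12\|b\|^2$ this forces $a=b$. Thus all solution pairs share one difference $\w$, which is unique with $\|\w\|=\rho$; and $\Sigma$ is convex because a convex combination of solution pairs lies in $\hat D_0\times\hat D_1$ and still has difference $\w$.

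For \eqref{lem_funsep3}: fixing $(u_0,u_1)\in\Sigma$, minimality of $\rho$ shows $u_0$ is the metric projection of $u_1$ onto $\hat D_0$ and $u_1$ is the metric projection of $u_0$ onto $\hat D_1$, so the projection inequality gives $\langle\w,v_0\rangle\le\langle\w,u_0\rangle$ for $v_0\in\hat D_0$ and $\langle\w,v_1\rangle\ge\langle\w,u_0\rangle+\rho^2$ for $v_1\in\hat D_1$; in particular $\langle\w,u_0\rangle$ is independent of the chosen solution pair. I would then set $M_0:=\{u:\langle\w,u\rangle=\langle\w,u_0\rangle\}$ and $M_1:=\{u:\langle\w,u\rangle=\langle\w,u_0\rangle+\rho^2\}$: these are hyperplanes orthogonal to $\w$, at Hilbert distance $\rho^2/\|\w\|=\rho$, separating $\hat D_0$ and $\hat D_1$ in the manner of footnote \ref{foot_sep}; the margin is maximal since any pair of parallel hyperplanes separating $\hat D_0$ and $\hat D_1$ is at distance at most $\operatorname{dist}(\hat D_0,\hat D_1)=\rho$. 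Being a coordinate projection of the convex set $\Sigma$, each $\Sigma_\ell$ is convex; $\Sigma_\ell\subseteq M_\ell\cap\hat D_\ell$ (itself convex), and $\Sigma\subseteq M_0\times M_1$.

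Parts \eqref{lem_funsep4}--\eqref{lem_funsep5} are the technical core, and I would prove them together via barycenters. The key fact is that $\{\,\int_{D_\ell}\Phi(d)\,\mu(\d d):\mu\in\PP(D_\ell)\,\}=\overline{\operatorname{conv}}\,\Phi(D_\ell)=\hat D_\ell$: it is the image of the weak-$*$-compact convex set $\PP(D_\ell)$ under the affine barycenter map, hence compact and convex, and it contains $\Phi(D_\ell)$ while lying in $\hat D_\ell$. Given $u_\ell\in\Sigma_\ell$, choose $\tilde\nu_\ell\in\PP(D_\ell)$ with $u_\ell=\int_{D_\ell}\Phi(d)\,\tilde\nu_\ell(\d d)$. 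Since $\langle\w,u_\ell\rangle$ equals the extreme value of the supporting functional $\langle\w,\cdot\rangle$ on $\hat D_\ell$ (the value defining $M_\ell$), while $d\mapsto\langle\w,\Phi(d)\rangle$ is $\le$ that value on $D_0$ (resp.\ $\ge$ it on $D_1$), the identity $\langle\w,u_\ell\rangle=\int\langle\w,\Phi(d)\rangle\,\tilde\nu_\ell(\d d)$ forces $\langle\w,\Phi(d)\rangle$ constant $\tilde\nu_\ell$-a.e., i.e.\ $\tilde\nu_\ell$ is carried by $\tilde X_\ell=\Phi^{-1}(M_\ell)\cap D_\ell=(k\o\delta)^{-1}X_\ell$. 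Pushing forward, $\nu_\ell:=\Phi_*\tilde\nu_\ell$ is a probability measure on $X_\ell$ with $u_\ell=\int_{X_\ell}x_\ell\,\nu_\ell(\d x_\ell)$; in particular $X_\ell$ is non-empty (it carries $\nu_\ell$) and compact (the closed set $M_\ell$ intersected with the compact set $\Phi(D_\ell)$), and each $x_\ell\in X_\ell$ equals $\Phi(d_\ell)=c(d_\ell,\cdot)$ for a support point $d_\ell\in D_\ell$. Applying this to a fixed solution pair $(u_0,u_1)$ and subtracting yields \eqref{eqn_wdec1}; evaluating at an index $i$ and using $\langle c_i,c_{d_\ell}\rangle_U=c(i,d_\ell)$ and $\langle c_i,\w\rangle_U=\w(i)$ converts \eqref{eqn_wdec1} into \eqref{eqn_wdec2}. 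Here ``unique'' refers to the separation vector $\w$ of \eqref{lem_funsep2}, not to the representing measures, which need not be unique.

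I expect the last step to be the main obstacle: arranging that the representing measure of a solution vector be supported on the thin set $X_\ell$ (equivalently $\tilde X_\ell$) rather than merely on $\hat D_\ell$ or $D_\ell$, and doing so with no metrizability or second-countability hypothesis on $U$. The device that keeps this clean is to run the barycenter construction \emph{downstairs} on the compact index set $D_\ell$ from the outset and then use the supporting-hyperplane identity to collapse the mass onto $\tilde X_\ell$, rather than applying Choquet's or Milman's theorem on $\hat D_\ell$ and afterward trying to lift the representing measure back through $k\o\delta$ by a measurable selection.
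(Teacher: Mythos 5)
Your proposal is correct, and while parts \ref{lem_funsep1}--\ref{lem_funsep3} essentially coincide with the paper's argument (your parallelogram-law computation is the same calculation as the paper's observation that the interpolated squared distance $\eta(t)$ is a quadratic with $\eta(0)=\eta(1)=\rho^2$ and nonnegative leading coefficient $\|\w'-\w\|^2$, which must therefore vanish; your explicit level-set hyperplanes $M_\ell=\{u:\langle\w,u\rangle=\mathrm{const}\}$ with the metric-projection inequality make precise what the paper asserts by contradiction), your treatment of parts \ref{lem_funsep4}--\ref{lem_funsep5} is genuinely different and in fact closes a gap the paper leaves open. The paper's proof asserts that ``the support vectors are the extreme points of the convex set $M_\ell\cap\hat D_\ell$'' and deduces the convex representation from that; making this rigorous requires observing that $M_\ell\cap\hat D_\ell$ is a face of $\hat D_\ell$, invoking Milman's theorem to place its extreme points inside $(k\circ\delta)D_\ell$, and then appealing to Choquet--Krein--Milman for the integral representation --- none of which is spelled out. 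Your route runs the barycenter map on the compact index set $D_\ell$ from the start, identifies $\hat D_\ell$ with the (weakly compact, convex, hence norm-closed) set of barycenters of $\mathcal P(D_\ell)$, and then uses the supporting-functional identity $\langle\w,u_\ell\rangle=\int\langle\w,(k\circ\delta)(d)\rangle\,\tilde\nu_\ell(\d d)$ together with the one-sided bound on $D_\ell$ to force $\tilde\nu_\ell$ to concentrate on $\tilde X_\ell$; pushing forward through $k\circ\delta$ then yields $\nu_\ell$ on $X_\ell$ directly, with no extreme-point machinery and no measurable-selection issues when lifting measures back through the kernel map. This buys a cleaner and more self-contained proof of the representation \eqref{eqn_wdec1}--\eqref{eqn_wdec2}; the only point of divergence from the paper's statement is your (reasonable) reading that ``unique'' in part \ref{lem_funsep5} refers to $\w$ rather than to the representing measures, whereas the paper's own uniqueness remark about shifted solution pairs is too vague to adjudicate.
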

			
		The proof of Lemma \ref{lem_funsep} can be found in Appendix \ref{lem_funsepproof}. \newline

%\cite[Theorem 6.1]{fattorini1991necessary}
		 
%		 \cite[Test]{fattorini1991necessary} \cite{fattorini1991necessary}
		 
		 With the separation vector $\w$ in hand, it is easy to construct the SVM classifier. Let $x_0 \in X_0$ denote an arbitrary support vector from the initial data set. We partition Hilbert space $U$ into two parts by projecting any point onto the ray from $x_0$ in the direction of $\w$:
		 	\begin{equation} \label{def_hilbpart}
				U_0 := \big\{ u : \proj_\w(u - x_0) < \tfrac \rho 2 \big\} \qquad \mathrm{and} \qquad U_1 := \big\{ u : \proj_\w(u - x_0) \ge \tfrac \rho 2 \big\}, \end{equation}
		where we break a tie in favor of the second label. We construct the partition of index space by pulling the sets $U_0$ and $U_1$ back through via kernel map: $L_0 := (k \circ \delta)^{-1} U_0$ and $L_1 := (k \circ \delta)^{-1} U_1$.\footnote{By construction, $L_0$ is open and $L_1$ is closed.} The SVM classifier $\kappa(i) := 1_{L_1}(i)$ is the indicator function of $L_1$. 
		
		This reduces the problem of classification to finding the convex coefficients $\tilde \nu_0, \tilde \nu_1$ as in \eqref{eqn_wdec2}. In the finite case, the convex coefficients can be found by solving the Karush-Kuhn-Tucker problem, as observed by Cortes and Vapnik \cite[Appendix A]{cortes1995support}. When the data sets $D_0, D_1$ are infinite, the problem is more difficult, and techniques from infinite-dimensional optimal control are necessary to compute the coefficients $\tilde \nu_0, \tilde \nu_1$. The appropriate generalization of the KKT conditions in the infinite-dimensional setting is the Pontryagin maximal principle \cite[Theorem 6.1]{fattorini1991necessary}; recent results can be found in \cite{krastanov2011pontryagin}.
		
		Let $\kappa_{D_0,D_1,c}$ denote the SVM classifier for observation data sets $D_0$ and $D_1$ with kernel $c$. Let $\D_c$ denote the space of compact subsets of $I$, equipped with the Hausdorff topology induced by metric $t_c$.\footnote{Let $D^\epsilon := \{ i : t_c(i,d) < \epsilon \mbox{ for some $d \in D$} \}$ denote the $\epsilon$-ball around any $D \in \D_c$. The Hausdorff distance $t_{c,\operatorname{Haus}}(D_1, D_2)$ is the minimum $\epsilon$ so that $D_1 \subseteq D_2^\epsilon$ and $D_2 \subseteq D_1^\epsilon$.} Observe that, as subsets of function space $\S^I$, the compact convex sets $\hat D_0$ and $\hat D_1$ vary jointly continuously in $(D_0, D_1,c)$. Consequently, the classifier $(D_0, D_1,c) \mapsto \kappa_{D_0, D_1, c}$ varies jointly continuously, which ensures that SVM is a good solution to the problem of classification.
		
			\begin{rem}[OLS as a Fuzzy Classifier]
				Using the OLS kriging predictor, we construct a fuzzy classifier $\lambda : I \to \S$.\footnote{A fuzzy classifier takes value $1$ on $D_1$, $0$ on $D_0$ and continuously interpolates in between.} Define the unified observation space $D := D_0 \cup D_1$, and define the data $y(d) := 1$ if $d \in D_1$ and $:= 0$ if $d \in D_0$.\footnote{The function $y : D \to \{0,1\}$ is continuous since the sets $D_0$ and $D_1$ are closed and disjoint.} Consider array space $\S^D := C(D,\S)$, and define the OLS kriging predictor $\hat \Mech_\OLS : \S^D \to \S^I$ as in Section \ref{sect_kriging}. We define the fuzzy OLS classifier by $\lambda(i) := \hat \Mech_\OLS(y)(i)$, i.e., simply by extending the data using OLS estimation. %Define the partition $\tilde L_0 := \{ i : \Re \lambda(i) < \tfrac 1 2\}$ and $\tilde L_1 := \{ i : \Re \lambda(i) \ge \tfrac 1 2 \}$, and define the non-fuzzy classifier $\tilde \kappa(i) := 1_{\tilde L_1}(i)$.
				
				The Gauss-Markov theorem implies that the fuzzy classifier $\lambda$ is geometrically optimal, minimizing estimated variance and mean-squared error. By \cite[Theorem 5]{hein2005maximal} the SVM classifier $\kappa$ is also geometrically optimal, being a maximum-margin classifier for the covariance metric $t_c$. This suggests a deeper connection between OLS and SVM, which we leave open for future work. 
			\end{rem}

	\section{Conclusion}
		
		In this article, we have presented a structural approach for resolving problems in non-parametric statistics. Our general formalism allows us to deal simultaneously with finite-dimensional and infinite-dimensional spaces. A minimality of assumptions ensures maximal applicability of our solutions. Our Main Theorem is a novel result in the structure theory of probability measures on topological vector spaces, and enables the construction of an Ordinary Least Squares estimator in absolute generality. We use the OLS estimator to solve the problem of linear regression, and we construct a stochastic OLS estimator to solve the problem of linear conditioning for ``uncorrelated implies independent'' measures. We prove a new representation theorem for covariance tensors, show that OLS can be used to define a good kriging predictor for vector-valued arrays on arbitrary index spaces, and we construct a general version of a support-vector machines classifier. We hope that our article has illuminated some of the deeper connections between probability theory, statistics and machine learning, and that future researchers can learn from and extend our results.

		\part*{Appendices}
		
		\appendix		
		
%		\section{Uniform Stability}
			
%		In this section, we prove that each of the tools in this paper is \emph{uniformly stable}, under suitable metrizability assumptions on the spaces $\Para$ and $Y$.

%		\subsection{Continuity Lemma} \label{sect_cty}
		
%		The next lemma is a strong uniformity result. In settings where it makes sense, the GME is a \emph{uniformly continuous} linear operator, and the operator norm varies continuously with respect to the parameter value $\theta$.

%		A weak solution is robust against \emph{measurement error}: namely, if the observed data $y$ are replaced by nearby data $y'$, then the conditional measures $\P_{\theta|\Mech=y}$ and $\PP_{\Mech,\theta|y'}$ are ``close'' (in the sense of weak convergence). However, a weak solution is not robust against \emph{parameter error}. Namely, if the parameter $\theta$ were replaced by $\theta'$, then there is no guarantee that the measures $\P_{\theta|\Mech=y}$ and $\PP_{\Mech,\theta'|y}$ be close.%\footnote{In fact, $\P_{\theta|\Mech=y}$ and $\PP_{\Mech,\theta'|y}$ will both be defined only if $y \in \supp \P_{\Mech,\theta} \cap \supp \P_{\Mech,\theta'}$.}

			\section{Strong Continuity Lemma} \label{app_cty}

			Define the extended hyperparameter space $\Theta_f := \big\{ (\theta, y) : \supp \P_{\Mech,\theta} \big\}$.\footnote{The space $\Theta_f$ is equipped with the subspace topology inherited from $\Theta \times Y$.} Define the OLS estimator as in Section \ref{sect_OLS}. Define the joint OLS estimator $\OLSopnone : \Theta_f \to \Para$ by $\OLSoptheta(\theta,y) := \OLSoptheta(y)$. The following is the long version of Lemma \ref{lem_ctyshort}.

				\begin{lem}[Strong Continuity Lemma, long version] \label{lem_cty}
%					\phantom{test}
%					\begin{enumerate}
%						\item \label{lem_ctystrong} (Weak Continuity) The parametrization $\theta \mapsto \OLSoptheta$ is continuous with respect to the weak operator topology.\footnote{More precisely, if $\theta^\gamma \to \theta$ is a convergent net of parameter values, $y^\gamma \to y$ is a convergent net of data satisfying the constraints $y^\gamma \in \supp \P_{\Mech,\theta^\gamma}$ and $y \in \supp \P_{\Mech,\theta}$, and $f \in \Para^*$, then $f\big[ \OLSoppara{\theta^\gamma}(y^\gamma) \big] \to f\big[ \OLSoptheta(y) \big]$ in $\Para$.} Similarly, the difference function $(\theta, \theta') \mapsto \OLSoppara{\theta'} - \OLSoptheta$ is jointly continuous with respect to the weak operator topology.
%						\item \label{lem_ctyuniform} (Uniform Continuity) 
						Let $\|\cdot\|_\Para$ be a continuous seminorm on $\Para$, and let $\|\cdot\|_Y$ be a continuous norm on $Y$ (for example, if $\Para$ is a Fr\'echet space and $Y$ is a Banach space). Define the operator norm of the OLS estimator (relative to $\|\cdot\|_\Para$ and $\|\cdot\|_Y$) by
							\begin{equation} \label{lem_ctyuniform}
								M_{\Mech,\theta} := \| \OLSoptheta\|_\op := \sup_{y \in \bar{A_{\Mech,\theta}}} \frac{\|\OLSoptheta(y) - m_\theta \|_\Para}{\|y - m_{\Mech,\theta} \|_Y} = \sup_{u \in U_{\Mech,\theta}} \frac{\|\OLSoptheta(u) \|_\Para}{\|u\|_Y} = \sup_{e \in Y^*} \frac{\| k_\theta \Mech^* e \|_\Para}{\|\Mech k_\theta \Mech^* e\|_Y}. \end{equation}
							 Then $M_{\Mech,\theta} < \oo$, and the real-valued function $\theta \mapsto M_{\Mech,\theta}$ is continuous.\footnote{The operator norm does not depend on the size of the means $\|m_\theta\|_\Para$ and $\|m_{\Mech,\theta}\|_Y$. It is important that $\|\cdot\|_Y$ be a norm (and not just a seminorm) in order to avoid dividing by zero.} Consequently, the joint OLS estimator $(\theta,y) \mapsto \OLSoptheta(y)$ is jointly continuous.
							 
						Similarly, the operator norm of the difference $\OLSoppara{\theta'} - \OLSoptheta$ satisfies
							\begin{equation}
								\Delta_{\Mech,\theta,\theta'} := \| \OLSoppara{\theta'} - \OLSoptheta\|_\op = \sup_{e \in Y^*} \frac{\| (k_{\theta'} - k_\theta) \Mech^* e \|_\Para}{\|\Mech (k_{\theta'} - k_\theta) \Mech^* e \|_Y}. \end{equation}
								% \sup_{y \in \bar{A_{\Mech,\theta}}} \frac{\|\OLSoptheta(y) - m_\theta \|_\Para}{\|y - m_{\Mech,\theta} \|_Y} = \sup_{y \in \bar{U_{\Mech,\theta}}} \frac{\|\OLSoptheta(y) \|_\Para}{\|y\|_Y}
						Furthermore, $\Delta_{\Mech,\theta,\theta'}  < \oo$, and the real-valued function $(\theta, \theta') \mapsto \Delta_{\Mech,\theta,\theta'}$ is continuous. Consequently, the joint difference function $(\theta,y; \theta', y') \mapsto \OLSoppara{\theta'}(y') - \OLSoptheta(y)$ varies jointly continuously.
%					\end{enumerate}
				\end{lem}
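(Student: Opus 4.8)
The plan is to repackage the per-$\theta$ content as uniform invertibility estimates coming from the Main Theorem, and then use the metrizability of $\Para$ and $\Data$ to promote these to joint continuity. First I would strip off the means: since $m_{\Mech,\theta}=\Mech(m_\theta)$ and $\theta\mapsto m_\theta$ is continuous into $\Para$ (model continuity, Section~\ref{sect_structspaces}), the affine shifts $y\mapsto y-m_{\Mech,\theta}$ and $v\mapsto v-m_\theta$ vary continuously, and none of the three quotients in \eqref{lem_ctyuniform} see $\|m_\theta\|_\Para$ or $\|m_{\Mech,\theta}\|_\Data$, so it suffices to treat the centered linear map $\Mech^{-1}:U_{\Mech,\theta}\to\hat U_{\Mech,\theta}$. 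Equality of the first two expressions is immediate from the $\|\cdot\|_\Data$-density of $U_{\Mech,\theta}$ in $\overline{A_{\Mech,\theta}}-m_{\Mech,\theta}$ and continuity of the extension. For the third I would use the identities of Lemma~\ref{lem_twospaces}: each $u\in U_{\Mech,\theta}$ is a $\|\cdot\|_\Data$-limit of vectors $\Mech k_\theta\Mech^*e=\iota_{\Mech,\theta}\iota_{\Mech,\theta}^*e$ with $e\in\Data^*$, and on these the centered OLS estimator acts by $\OLSoptheta(\Mech k_\theta\Mech^*e)=k_\theta\Mech^*e$, because $\Mech^{-1}=\iota_\theta\circ\eta_{\Mech,\theta}\circ\iota_{\Mech,\theta}^{-1}$ on $U_{\Mech,\theta}$ and the cancellation $\eta_{\Mech,\theta}\iota_{\Mech,\theta}^*=\iota_\theta^*\Mech^*$ holds (it follows from $\iota_{\Mech,\theta}^*=\pi_{\Mech,\theta}\iota_\theta^*\Mech^*$, from $(\iota_\theta^*\Mech^*)\Data^*\subseteq\hat H_{\Mech,\theta}$, and from $\eta_{\Mech,\theta}\circ\pi_{\Mech,\theta}$ being the orthogonal projection of $H_\theta$ onto $\hat H_{\Mech,\theta}$).

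Finiteness of $M_{\Mech,\theta}$ is then exactly the Main Theorem (Theorem~\ref{thm_iso}): $\Mech^{-1}:U_{\Mech,\theta}\to\hat U_{\Mech,\theta}\subseteq\Para$ is continuous, so $u\mapsto\|\Mech^{-1}u\|_\Para$ is a continuous seminorm on $U_{\Mech,\theta}$ in the $\|\cdot\|_\Data$-topology, and continuity at the origin yields a constant $M$ with $\|\Mech^{-1}u\|_\Para\le M\|u\|_\Data$ for all $u$ — which is $M_{\Mech,\theta}$. (It is here that $\|\cdot\|_\Data$ must be a norm and not a seminorm, so that no nonzero $u$ has $\|u\|_\Data=0$.) For finiteness of $\Delta_{\Mech,\theta,\theta'}$ I would run the same mechanism on the auxiliary covariance operator $k_\theta+k_{\theta'}$: it is a genuine symmetric nonnegative covariance operator and $\pm(k_{\theta'}-k_\theta)\preceq k_\theta+k_{\theta'}$ as quadratic forms on $\Para^*$, so a Douglas-type factorization routes $(k_{\theta'}-k_\theta)\Mech^*$ through the Cameron--Martin embedding of $k_\theta+k_{\theta'}$, on which $\Mech$ is boundedly invertible by the Main Theorem; bookkeeping the factorization constants bounds the numerator of $\Delta_{\Mech,\theta,\theta'}$ by its denominator.

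The real work is the continuity of $\theta\mapsto M_{\Mech,\theta}$, and with it the continuity of $(\theta,\theta')\mapsto\Delta_{\Mech,\theta,\theta'}$. Lower semicontinuity is free: for fixed $e$ with $\Mech k_\theta\Mech^*e\neq0$, the quotient at $\theta'$ converges to the quotient at $\theta$ as $\theta'\to\theta$ (numerator and denominator are continuous in $\theta'$ because $\theta\mapsto k_\theta f$ is continuous into $\Para$ for each $f$, and the denominator stays away from zero near $\theta$), whence $\liminf_{\theta'\to\theta}M_{\Mech,\theta'}\ge M_{\Mech,\theta}$. For the reverse inequality I would work through the difference operator, using $\Delta_{\Mech,\theta,\theta}=0$ and the triangle estimate $|M_{\Mech,\theta'}-M_{\Mech,\theta}|\le\Delta_{\Mech,\theta,\theta'}$, and then showing $\Delta_{\Mech,\theta,\theta'}\to0$ as $\theta'\to\theta$ by combining the uniform bound above with the strong-operator convergences $k_{\theta'}\Mech^*e\to k_\theta\Mech^*e$ in $\Para$ and $\Mech k_{\theta'}\Mech^*e\to\Mech k_\theta\Mech^*e$ in $\Data$; completeness of $\Para$ and $\Data$ enters exactly here, via the uniform boundedness principle and the Banach--Schauder theorem, to convert pointwise convergence of the covariance operators into operator-norm convergence of the inverses and to preclude escape of mass in the supremum.

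Finally, once $\theta\mapsto M_{\Mech,\theta}$ is locally bounded and $(\theta,\theta')\mapsto\Delta_{\Mech,\theta,\theta'}$ is continuous, joint continuity of $(\theta,y)\mapsto\OLSoptheta(y)$ on $\Theta_f$ and of $(\theta,y;\theta',y')\mapsto\OLSoppara{\theta'}(y')-\OLSoptheta(y)$ follows from the splitting of $\OLSoppara{\theta'}(y')-\OLSoptheta(y)$ into a hyperparameter increment controlled by $\Delta_{\Mech,\theta,\theta'}$ and a data increment controlled by $M_{\Mech,\theta'}$, together with the continuity of $\theta\mapsto k_\theta\Mech^*e$. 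I expect the step that turns strong-operator convergence of the covariance operators into genuine norm continuity of the inverses — equivalently, the assertion $\Delta_{\Mech,\theta,\theta'}\to0$ along the diagonal — to be the main obstacle, since that is the one place where the metrizability hypotheses on $\Para$ and $\Data$, rather than the Main Theorem alone, are indispensable.
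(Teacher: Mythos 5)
Your overall skeleton matches the paper's: centre by the means, get the three equalities in \eqref{lem_ctyuniform} from density of $\Mech k_\theta \Mech^* Y^*$ and $U_{\Mech,\theta}$ in $\bar{A_{\Mech,\theta}} - m_{\Mech,\theta}$, get $M_{\Mech,\theta} < \oo$ from continuity of $\Mech^{-1}$ (the Main Theorem), and finish with the same triangle-inequality splitting $\|\OLSoppara{\theta^\gamma}(y^\gamma) - \OLSoptheta(y)\|_\Para \le \Delta_{\Mech,\theta,\theta^\gamma}\|y^\gamma\|_Y + M_{\Mech,\theta}\|y^\gamma - y\|_Y$. The genuine gap is in the one step you yourself call ``the main obstacle'': you derive upper semicontinuity of $\theta \mapsto M_{\Mech,\theta}$, and the limit $\Delta_{\Mech,\theta,\theta'} \to 0$, from the claim that the strong-operator convergence $k_{\theta'}\Mech^* e \to k_\theta \Mech^* e$ can be upgraded to the operator-norm-type convergence $\Delta_{\Mech,\theta,\theta'} \to 0$ ``via the uniform boundedness principle and the Banach--Schauder theorem.'' Neither tool does that job. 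Uniform boundedness converts pointwise convergence into equiboundedness, never into norm convergence; on $\ell^2$ the coordinate truncation projections converge strongly to the identity while remaining at operator distance $1$ from it. Model continuity only gives weak convergence of the measures, hence only pointwise convergence $k_{\theta'}f \to k_\theta f$, so nothing in the hypotheses makes this upgrade available. Since your entire continuity argument for $M_{\Mech,\theta}$ and for the joint estimator is routed through $\Delta_{\Mech,\theta,\theta'} \to 0$, the proof as proposed does not close.

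The paper avoids this by never attempting to convert strong convergence into norm convergence: it writes $M_{\Mech,\theta} = \sup_{e} M_{\Mech,\theta}(e)$ with $M_{\Mech,\theta}(e) := \|k_\theta\Mech^* e\|_\Para / \|\Mech k_\theta \Mech^* e\|_Y$, notes that $(\theta,e) \mapsto M_{\Mech,\theta}(e)$ is jointly continuous, and deduces continuity of the supremum from convexity of the superlevel sets $\{e : M_{\Mech,\theta}(e) \ge M_{\Mech,\theta} - \epsilon\}$; the same device is applied to $\Delta_{\Mech,\theta,\theta'}$, and the vanishing of $\Delta$ along the diagonal then follows from $\Delta_{\Mech,\theta,\theta} = 0$ together with that continuity, rather than the other way around. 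Two of your side remarks are worth keeping: the explicit verification $\OLSoptheta(\Mech k_\theta \Mech^* e) = k_\theta \Mech^* e$ via the six-space identities, which the paper leaves implicit, and the Douglas-type factorization of $k_{\theta'} - k_\theta$ through the covariance operator $k_\theta + k_{\theta'}$ for the finiteness of $\Delta_{\Mech,\theta,\theta'}$ --- the paper dismisses that case as ``similar'' even though $k_{\theta'} - k_\theta$ is not itself a covariance operator, so your observation fills a hole the paper glosses over. But to repair the main argument you must either prove $\Delta_{\Mech,\theta,\theta'} \to 0$ honestly or switch to an argument, like the paper's supremum-of-a-jointly-continuous-family one, that only ever uses the pointwise continuity of $\theta \mapsto k_\theta \Mech^* e$.
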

				
%				The proof of Lemma \ref{lem_cty} can be found in Appendix \ref{lem_ctyproof}.

%			\subsection{Proof of Strong Continuity Lemma (Lemma \ref{lem_cty})} \label{lem_ctyproof}
			
			\begin{proof}
%					Since $\P$ defines a topological vector  model, the mean-vector function $\theta \mapsto m_\theta$ is continuous (in $\Para$), and the covariance-operator function $\theta \mapsto k_\theta$ is continuous (in the strong operator topology).\footnote{i.e., for all $f \in \Para^*$, the function $\theta \mapsto k_\theta f$ is continuous (in $\Para$).} \notom{argue the proof of strong continuity. This is obnoxious and technical, but ultimately should not be very difficult. It suffices to prove the claim for a net $e^\gamma$ such that $\Mech k_\theta \Mech^* e^\gamma \to y$.} 				
					
					The equality of the different quantities in \eqref{lem_ctyuniform} and \eqref{lem_ctyuniform} is clear, since $\Mech k_\theta \Mech^* Y^*$ and $U_{\Mech,\theta}$ are both dense in $\bar{U_{\Mech,\theta}} = \bar{\Mech k_\theta \Mech^* Y^*} = \bar{A_{\Mech,\theta}} - m_{\Mech,\theta}$. The quantity $M_{\Mech,\theta}$ is finite since the OLS estimator $\OLSoptheta$ is a continuous operator.\footnote{In a seminormed space, continuity implies boundedness.}
					
					For each $e \in Y^*$, define $M_{\Mech,\theta}(e) := \frac{\| k_\theta \Mech^* e \|_\Para}{\|\Mech k_\theta \Mech^* e\|_Y}$, so that $M_{\Mech,\theta} = \sup_{e \in Y^*} M_{\Mech,\theta}(e)$.\footnote{We set $M_{\Mech,\theta}(e) := 1$ if $\|\Mech k_\theta \Mech^* e\|_Y = 0$. Since $\|\cdot\|_Y$ is a norm, $\|\Mech k_\theta \Mech^* e\|_Y = 0$ implies that $e \in \null \Mech k_\theta \Mech^*$, which implies that $\Mech^* e \in \null k_\theta$ (since $\Mech$ is an isomorphism on $U_\theta$), which implies that $\|k_\theta \Mech^* e\|_\Para = 0$. If $\|\cdot\|_Y$ were instead a seminorm, it could be possible for $\|\Mech k_\theta \Mech^* e \|_Y = 0$ but $\|k_\theta \Mech^*\| \ne 0$, forcing $M_{\Mech,\theta} = \oo$.} By continuity of $\theta \mapsto k_\theta$, the function $(\theta,e) \mapsto M_{\Mech,\theta}(e)$ is jointly continuous in $\theta$ and $e$.\footnote{This claim also relies on the continuity of $\Mech$ and $\Mech^*$, the seminorm $\|\cdot\|_\Para$, and the norm $\|\cdot\|_Y$.} The sets $\{ e : M_{\Mech,\theta}(e) \ge M_{\Mech,\theta} - \epsilon \}$ are convex, which implies that $\theta \mapsto M_{\Mech,\theta}$ varies continuously. 
					
					The corresponding arguments for the difference function $(\theta, \theta') \mapsto \OLSoppara{\theta'} - \OLSoptheta$ and its operator norm $\Delta_{\Mech,\theta,\theta'}$ are similar, so we omit them. 
					
					To see that $(\theta,y) \mapsto \OLSoptheta(y)$ varies jointly continuously, let $(\theta^\gamma,y^\gamma) \to (\theta,y)$ be a convergent net in $\Theta_f$. By the triangle inequality, 
						\begin{eqnarray}
							\big\| \OLSoppara{\theta^\gamma}(y^\gamma) - \OLSoptheta(y) \big\|_\Para &\le& \big\| \OLSoppara{\theta^\gamma}(y^\gamma) - \OLSoptheta(y^\gamma) \big\|_\Para + \big\| \OLSoptheta(y^\gamma) - \OLSoptheta(y) \big\|_\Para \nonumber \\
							&\le& \big\| \OLSoppara{\theta^\gamma} - \OLSoptheta\big\|_\op \big\| y^\gamma \big\|_Y + \big\| \OLSoptheta\big\|_\op \big\| y^\gamma - y \big\|_Y \nonumber \\
							&=& \Delta_{\Mech,\theta,\theta^\gamma} \big\| y^\gamma \big\|_Y + M_{\Mech,\theta} \big\| y^\gamma - y \big\|_Y \to 0, 
						\end{eqnarray}
					since $\Delta_{\Mech,\theta,\theta^\gamma} \to \Delta_{\Mech,\theta,\theta} = 0$, $\big\| y^\gamma \big\|_Y \to \big\| y \big\|_Y$, $M_{\Mech,\theta}$ is constant, and $\big\| y^\gamma - y \big\|_Y \to 0$. This proves joint continuity. The proof of joint continuity for the difference function is similar, so we omit it. 					%\hfill \qed
				\end{proof}

			\section{Proofs of Technical Results}
			
			\subsection{Proof of Three-Space Lemma (Lemma \ref{lem_threespace})} \label{lem_threespaceproof}
			
%			\begin{proof}
				Let $\iota_\theta^* : \Para^* \twoheadrightarrow H_\theta$ be the linear inclusion map, which sends a functional $f$ to its equivalence class $\iota_\theta^*f := [f]_\theta \in H_\theta$.\footnote{The equivalence relation on $\Para^*$ is defined by $e \sim_\theta f$ if $\var_\theta(e - f) = 0$. Equivalently, $e = f$ with $\P_\theta$-probability one.} By construction, the map $\iota_\theta^*$ has dense image in $H_\theta$. 
				
				We define the continuous injective map $\iota_\theta : H_\theta \hookrightarrow \Para$ as follows. On the dense subspace $\iota_\theta^* \Para^* \subseteq H_\theta$, we define $\iota_\theta\!\big( \iota_\theta^* f \big) := k_\theta(f)$. Since the map $\iota_\theta^*$ is isometric (by construction), we have that 
					\begin{equation} \label{eqn_covdense}
						\cov_\theta\!\big[ \iota_\theta^* f \big| \iota_\theta^* e \big] = \cov_\theta\!\big[ f \big| e \big] = \bar f\big[ k_\theta e \big] = \bar f\big[ \iota_\theta\!\big( \iota_\theta^* e \big) \big]. \end{equation}
				
				Since $k$ is a continuous operator, the definition of $\iota_\theta$ extends to the entire Hilbert space $H_\theta$.\footnote{i.e., if $f^\gamma$ is a net in $\Para^*$ such that $\iota_\theta^* f^\gamma \to h$ in $H_\theta$, we define $\iota_\theta(h) := \lim_s k_\theta(f^\gamma)$. To see that this does not depend on the specific choice of net, let $\iota_\theta^* \tilde f^\gamma \to h$ be a second convergent net in $H_\theta$. For any $e \in \Para^*$, we use \eqref{eqn_covdense} to see that $e\big[ k_\theta\!\big( f^\gamma - \tilde f^\gamma \big) \big] = \cov_\theta\!\big[ \bar e \big| f^\gamma - \tilde f^\gamma \big] \to \cov_\theta[e | h - h] = 0$. Consequently, the diagram \eqref{dia_classic} commutes. Since the dual space separates points, this implies that the map $\iota_\theta$ is well-defined.} Consequently, the diagram \eqref{dia_classic} commutes. 
				
				Injectivity of $\iota_\theta$ follows, since the construction of $H_\theta$ involved quotienting out by the null space of $k$. Since $\iota_\theta^* \Para^*$ is dense in $H_\theta$, \eqref{eqn_covdense} implies that $\cov_\theta\!\big[ \iota_\theta^* f \big| h \big] = \bar f\big[ \iota_\theta h \big]$ for all $f \in \Para^*$ and $h \in H_\theta$.
				
				The separable-support hypothesis implies that $k_\theta \Para^*$ and $\bar{k_\theta \Para^*}$ are separable subspaces of $\Para$. The three-space diagram implies that the embedded Cameron-Martin space $U_\theta := \iota_\theta H_\theta$ is a dense subspace of $\bar{k_\theta \Para^*}$, and is therefore separable. The map $\iota_\theta$ is a continuous embedding, so the separable Hilbert space $U_\theta$ is isomorphic to the abstract Cameron-Martin space $H_\theta$ (hence the latter is also separable). Translation by $m_\theta$ is an isomorphism in $\Para$, so $U_\theta$ and $A_\theta$ are isomorphic as affine Hilbert spaces. \hfill \qed
%			\end{proof}

						\subsection{Proof of Six-Space Lemma (Lemma \ref{lem_twospaces})} \label{lem_twospacesproof}
			
%			\begin{proof}
%				It is a consequence of the Covariance Structure Lemma (applied to $\Para$) that the following diagram commutes:
%					\begin{equation} \label{dia_basic}
%						\begin{matrix} \xymatrix{ Y^* \ar@{->}[r]^{\Mech^*} & \Para^* \ar@{->}[rr]^{k_\theta} \ar@{->>}[rd]_{\iota_\theta^*} && \Para \ar@{->}[r]^\Mech & Y \\ && H_\theta \ar@{^(->}[ur]_{\iota_\theta} && } \end{matrix} \end{equation}

%		Since $\P_{\Mech,\theta}$ is another finite-variance measure, the structure theory also applies to it. Using the Covariance Structure Lemma (applied to $Y$), there exists an abstract Cameron-Martin space $H_{\Mech,\theta}$ 

				First, we construct the inclusion map $\eta_{\Mech,\theta} : H_{\Mech,\theta} \hookrightarrow \hat H_{\Mech,\theta}$. On the dense subspace $\iota_{\Mech,\theta}^* Y^* \subseteq H_{\Mech,\theta}$, we define $\eta_{\Mech,\theta}\big(\iota_{\Mech,\theta}^* e\big) := \big( \iota_\theta^* \circ \Mech^* \big)(e)$. This map is continuous, so extends to the entire space $H_{\Mech,\theta}$. By construction, the inclusion  $\eta_{\Mech,\theta}$ is clearly an isomorphism from $H_{\Mech,\theta}$ onto $\hat H_{\Mech,\theta}$. 
				
				Since the space $\hat H_{\Mech,\theta}$ is a closed subspace of $H_{\Mech,\theta}$, the (internal) orthogonal projection map $\hatpi_{\Mech,\theta} : H_\theta \twoheadrightarrow \hat H_{\Mech,\theta}$ is guaranteed to exist. We define the (external) orthogonal projection map by $\pi_{\Mech,\theta} = \eta_{\Mech,\theta}^{-1} \circ \hatpi_{\Mech,\theta}$.
				
				We next prove the two identities \eqref{eqn_twoidentities}. On the domain $Y^*$, we have
					\begin{equation}
						\pi_{\Mech,\theta} \circ \iota_\theta^* \circ \Mech^* = \big( \eta_{\Mech,\theta}^{-1} \circ \hatpi_{\Mech,\theta} \big) \circ \iota_\theta^* \circ \Mech^* =  \eta_{\Mech,\theta}^{-1} \circ \iota_\theta^*  \circ \Mech^*  = \iota_{\Mech,\theta}^* \end{equation}
				since $\hatpi_{\Mech,\theta}$ is the identity map on $\iota_\theta^* Y^* \subseteq \hat H_{\Mech,\theta}$, and $\iota_\theta^*  \circ \Mech^* = \eta_{\Mech,\theta} \circ \iota_{\Mech,\theta}^*$.
				
				To prove the second identity, let $e \in Y^*$, and observe that
					\begin{equation}
						\big( \Mech \circ \iota_\theta \circ \eta_{\Mech,\theta} \big)\big( \iota_{\Mech,\theta}^* e \big) = \big( \Mech \circ \iota_\theta \circ \iota_\theta^* \circ \Mech^* \big)(e) = \Mech k_\theta \Mech^* e = \iota_{\Mech,\theta}\big( \iota_{\Mech,\theta}^* e \big). \end{equation}
				Since the space $\iota_{\Mech,\theta}^* Y^*$ is dense in $H_{\Mech,\theta}$ and all maps are continuous, this proves the second identity. The commutativity of the diagram follows.	\hfill \qed

			\subsection{Proof of Theorem \ref{thm_OLS}} \label{thm_OLSproof}
			
%			\begin{proof}
				Since $\OLSoptheta$ is defined to be the continuous extension of $\Mech^{-1} : A_{\Mech,\theta} \to \hat A_{\Mech,\theta}$, it is well-defined and continuous. To see that $\OLSoptheta$ is a right-inverse to $\Mech$, note that $(\Mech \circ \OLSoptheta)(\ell) = (\Mech \circ \Mech^{-1})(\ell)$ for all $\ell \in A_{\Mech,\theta}$. Consequently, the continuous map $\Mech \circ \OLSoptheta$ agrees with the identity operator on the dense subspace $L_{\Mech,\theta} \subseteq \bar{L_{\Mech,\theta}}$, hence $\Mech \circ \OLSoptheta$ equals the identity operator. This proves that $\OLSoptheta$ is a well-defined, continuous linear estimator.
				
				By the Main Theorem, the isomorphism $\Mech^{-1}$ maps the center point $m_{\Mech,\theta} \in A_{\Mech,\theta}$ to the center point $m_\theta \in \hat A_{\Mech,\theta}$, so $\OLSoptheta$ is unbiased, proving part \ref{thm_OLS2}. 
				
%				The map $\Mech \circ \Mech^{-1} : A_{\Mech,\theta} \to A_{\Mech,\theta}$ is the identity operator $\I_{A_{\Mech,\theta}}$, so the identity $\Mech \circ \OLSoptheta: \supp \P_{\Mech,\theta} \to \supp \P_{\Mech,\theta}$ is the identity operator $\I_{\supp \P_{\Mech,\theta}}$. This proves \eqref{thm_OLS3}.

%				By Theorem \ref{thm_iso}, the inverse-restriction maps $\Mech^{-1} : U_{\Mech,\theta} \to \hat U_{\Mech,\theta}$ and $\Mech^{-1} : A_{\Mech,\theta} \to \hat A_{\Mech,\theta}$ are isomorphisms of Hilbert spaces. By extending linearly along the line $\S m_{\Mech,\theta}$ generated by the mean vector, the isomorphism $\Mech^{-1} : \S m_{\Mech,\theta} \oplus U_{\Mech,\theta} \to \S m_\theta \oplus \hat U_{\Mech,\theta}$ is well-defined. 
				
%				The closure of the domain is the full linear support $\S m_{\Mech,\theta} \oplus \bar{U_{\Mech,\theta}} = \S m_{\Mech,\theta} \oplus \bar{\Mech k_\theta \Mech^* Y^*}$, and the closure of the codomain is the linear subspace $\S m_\theta \oplus \bar{\hat U_{\Mech,\theta}} = \S m_\theta \oplus \bar{k_\theta \Mech^* Y^*}$ in $\Para$. On the full linear support, we define the Gauss-Markov estimator as the continuous linear extension of the isomorphism $\Mech^{-1}$, which proves property \ref{thm_OLS0} (including footnote \ref{ft_fulldomain}). Property \ref{thm_OLS1} immediately follows. 

				Let $\Mech^2  : Y \to Z$ be a continuous linear map, and consider the continuous linear composition $\Mech^2 \circ \Mech^1 : \Para \to Z$. Let $A_{\Mech^2,\theta}$ and $A_{\Mech^2 \circ \Mech^1,\theta}$ denote the two affine Cameron-Martin spaces in $Z$ (which in general will not be equal), and let $\hat A_{\Mech^2,\theta} \subseteq Y$ and $\hat A_{\Mech^2 \circ \Mech^1,\theta} \subseteq \Para$ denote their lifts. By the Main Theorem, the inverses of the restriction maps $(\Mech^2)^{-1} : A_{\Mech^2,\theta} \to \hat A_{\Mech^2,\theta}$ and $(\Mech^2 \circ \Mech^1)^{-1} : A_{\Mech^2\circ \Mech^1,\theta} \to \hat A_{\Mech^2\circ \Mech^1,\theta}$ are isomorphisms of affine Hilbert spaces. Therefore, $(\Mech^2 \circ \Mech^1)^{-1} = (\Mech^1)^{-1} \circ (\Mech^2)^{-1}$ (by co-functoriality of the inverse), hence $\OLSoptheta^{2,1} = \OLSoptheta^1 \circ \OLSoptheta^2$, proving part \ref{thm_OLS4}. \hfill \qed
%			\end{proof}	

				\subsection{Proof of Lemma \ref{lem_PPcty}} \label{lem_PPctyproof}
					The residual measure $\hat \P_{\hat \Mech, \theta}^\perp$ is supported on the space $\bar{\hat U_{\Mech, \theta}^\perp} \subseteq V$, which is in the kernel of $\Mech : U_\theta \to U_{\Mech, \theta}$ by the Main Theorem. Consequently, the translated measure $\hat \P_{\hat \Mech, \theta}^\perp$ is supported on the space $\hat \Mech(y) + \bar{\hat U_{\Mech, \theta}^\perp}$. Since $\hat \Mech$ is an estimator, $\Mech\big( \hat \Mech(y) \big) = y$, hence $\PP_{\hat \Mech, \theta|\Mech=y}$ is supported on the fiber $\Mech^{-1}(y)$.
					
%			\begin{proof}
				Let $\EE_{\hat \Mech,\theta|\Mech=y}$ be the expectation operator of $\PP_{\hat \Mech,\theta|\Mech=y}$. Let $s : \Para \to \S$ be a continuous, bounded function, and note that
						\begin{equation} \label{eqn_proofunif1}
							\EE_{\hat \Mech,\theta|\Mech=y}[s] := \int_\Para s(\para) \, \PP_{\hat \Mech,\theta|\Mech=y}(\d \para) := \int_\Para s\big( \hat \Mech(y) + \resid \big) \, \hat \P_{\hat\Mech,\theta}^\perp(\d \resid). \end{equation}
					
					Let $y^\gamma \to y$ be a convergent net in $\supp \P_{\Mech,\theta}$. By the bounded convergence theorem \cite{williams1991probability},
						\begin{equation}
							\int_\Para s(\para) \, \PP_{\hat \Mech,\theta|\Mech=y^\gamma}(\d \para) = \int_\Para s\big( \hat \Mech(y^\gamma) + \resid\big) \, \hat \P_{\hat\Mech,\theta}^\perp(\d \resid) \to \int_\Para s\big( \hat\Mech(y) + \resid \big) \, \hat \P_{\hat \Mech,\theta}^\perp(\d \resid) = \int_\Para s(\para) \, \PP_{\hat\Mech,\theta|\Mech=y}(\d \para), \end{equation}
					since $s$ and $\hat \Mech$ are continuous functions and $s$ is bounded. This proves the claim of continuity.  \hfill \qed

%			\end{proof}

	\subsection{Proof of Proposition \ref{pro_samemeancov}} \label{pro_samemeancovproof}
		
		Let $y \mapsto \P_{\theta|\Mech=y}$ be a measurable disintegration of $\P_\theta$ with respect to $f$.\footnote{By the Disintegration Theorem \cite{leao2004regular}, measurable disintegrations are guaranteed to exist, though they may not be continuous.} Let $y \mapsto \E_{\theta|\Mech=y}$ denote its regular conditional expectation operator. For any functional $\varphi \in X^*$, the disintegration equations for $\P_{\Mech,\OLS,\theta}^*$ and $\P_\theta$ imply that
					\begin{equation} \label{eqn_condmean}
						\int_Y \EE_{\theta|\Mech=y}[\varphi] \, \P_{\Mech,\OLS,\theta}^*(\d y) = \varphi(m_\theta) = \E_\theta[\varphi] = \int_Y \E_{\theta|\Mech=y}[\varphi] \, \P_{\Mech,\theta}(\d y), \end{equation}
				since $\P_{\Mech,\OLS,\theta}^*$ has mean vector $m_\theta$. Since conditional expectations are unique (up to null sets), this proves that $\EE_{\theta|\Mech=y}[\varphi] = \E_{\theta|\Mech=y}[\varphi]$ for $\P_{\Mech,\theta}$-almost every $y$. Since Hilbert space $H_\theta$ is separable, we may find a countable set $\varphi^i$ of functionals for which $\iota_\theta^* \varphi^i$ is dense in $H_\theta$. Therefore \eqref{eqn_condmean} holds for these countably many functionals (up to a single null set in $Y$), which implies that the Pettis mean of $\PP_{\theta|\Mech=y}$ equals that of $\P_{\theta|\Mech=y}$ (up to a null set in $Y$). 
				
				The proof that the conditional covariance operators agree (up to a null set in $Y$) is similar: replace $\varphi$ in the above argument with $(\bar \psi - \E_\theta[\bar \psi])(\varphi - \E_\theta[\varphi])$.  \hfill \qed

\subsection{Proof of Proposition \ref{pro_PPstrongcty}} \label{pro_PPstrongctyproof}
				
%				\begin{proof}
					Let $\|\cdot\|_\Para$ and $\|\cdot\|_\Para$ be continuous norms on $\Para$ and $Y$, respectively. Define the operator norm $M_{\Mech,\theta} := \| \OLSoptheta\|_\op$ of the Gauss-Markov estimator (as in Lemma \ref{lem_cty}), and let $\Delta_{\Mech,\theta,\theta'} := \| \OLSoppara{\theta'} - \OLSoptheta\|_\op$ be the operator norm of the difference function.\footnote{The definitions of the operator norms depend on both $\|\cdot\|_\Para$ and $\|\cdot\|_Y$.} By Lemma \ref{lem_cty}, the real-valued functions $\theta \mapsto M_{\Mech,\theta}$ and $(\theta,\theta') \mapsto \Delta_{\Mech,\theta,\theta'}$ are continuous. 
					
					Choose any Lipschitz-continuous, bounded function $s : \Para \to \S$. Let $\lambda > 0$ denote a Lipschitz constant, so that $\big| s(\para) - s(\para') \big| \le \lambda \|\para - \para'\|_\Para$ for all $\para,\para' \in \Para$. Let $\EE_{\theta|\Mech=y}$ denote the expectation operator of $\PP_{\theta|\Mech=y}$.
					
					Let $(\theta^\gamma,y^\gamma) \to (\theta,y)$ be a convergent net in the extended parameter space. It suffices to show that $\EE_{\theta^\gamma|\Mech=y^\gamma}[s] \to \EE_{\theta|\Mech=y}[s]$.\footnote{The Portmanteau Theorem \cite{klenke2008probability} implies weak continuity of the function $(\theta,y) \mapsto \PP_{\theta|\Mech=y}$.} We apply the representation \eqref{eqn_proofunif1} for both $(\theta^\gamma,y^\gamma)$ and $(\theta,y)$, along with the triangle inequality:
						\begin{eqnarray}
							\Big| \EE_{\theta^\gamma|\Mech=y^\gamma}[s] - \EE_{\theta|\Mech=y}[s] \Big| &=& \left| \int_\Para s(\para) \, \PP_{\theta^\gamma|\Mech=y^\gamma}(\d \para) - \int_\Para s(\para) \, \PP_{\theta|\Mech=y}(\d \para) \right| \nonumber \\\
							&=& \left| \int_\Para s\Big( \OLSoppara{\theta^\gamma}(y^\gamma) + \resid \Big) \, \P_{\Mech,\theta^\gamma}^\perp(\d \resid) - \int_\Para s\Big( \OLSoptheta(y) + \resid \Big) \, \P_{\Mech,\theta}^\perp(\d \resid) \right| \nonumber \\
							&\le&  \int_\Para \Big| s\Big( \OLSoppara{\theta^\gamma}(y^\gamma)  + \resid \Big) - s\Big( \OLSoptheta(y) + \resid \Big) \Big| \, \hat \P_{\Mech,\theta^\gamma}^\perp(\d \resid) \label{qua_A} \\
							&& \quad +\quad \left| \int_\Para s\Big( \OLSoptheta(y) + \resid \Big) \, \P_{\Mech,\theta^\gamma}^\perp(\d \resid) - \int_\Para s\Big( \OLSoptheta(y) + \resid \Big) \, \P_{\Mech,\theta}^\perp(\d \resid) \right|. \nonumber
						\end{eqnarray}					
					The on the second line of \eqref{qua_A} vanishes as $\gamma \to \oo$, since $\resid \mapsto s\big( \OLSoptheta(y) + \resid \big)$ is continuous and bounded and the measures $\P_{\Mech,\theta^\gamma}^\perp$ converge weakly to $\P_{\Mech,\theta}^\perp$. To control the quantity on the first line of \eqref{qua_A}, we use our metric assumptions, the fact that $s$ is Lipschitz, and the triangle inequality to calculate:
						\begin{eqnarray}
							\Big| s\Big( \OLSoppara{\theta^\gamma}(y^\gamma) + \resid \Big) - s\Big( \OLSoptheta(y) + \resid \Big) \Big| &\le& \lambda \Big\| \Big( \OLSoppara{\theta^\gamma}(y^\gamma) + \resid \Big) - \Big( \OLSoptheta(y) + \resid \Big) \Big\|_\Para \nonumber \\
							&\le& \lambda \Big\| \OLSoppara{\theta^\gamma}(y^\gamma - y) \Big\|_\Para + \lambda \Big\|  \OLSoppara{\theta^\gamma}(y)  - \OLSoptheta(y) \Big\|_\Para \nonumber \\
							&\le& \lambda \big\|\OLSoppara{\theta^\gamma} \big\|_\op \big\|y^\gamma - y\big\|_Y + \lambda \big\|\OLSoppara{\theta^\gamma} - \OLSoptheta\big \|_\op \big\|y\big\|_Y \nonumber \\
							&=& \lambda M_{\Mech,\theta^\gamma} \big\|y^\gamma - y\big\|_Y + \lambda \Delta_{\Mech,\theta,\theta^\gamma} \big\|y\big\|_Y. \label{lastquant}
						\end{eqnarray}
					Quantity \eqref{lastquant} vanishes as $(\theta^\gamma,y^\gamma) \to (\theta,y)$, since $M_{\Mech,\theta^\gamma} \to M_{\Mech,\theta}$, $\big\|y^\gamma - y\big\|_Y \to 0$, and $\Delta_{\Mech,\theta,\theta^\gamma} \to \Delta_{\Mech,\theta,\theta} = 0$. This completes the proof. \hfill \qed
					
%					Let $s : \Para \to \S$ be continuous and bounded, and let $y^\gamma \to y$ be a convergent net in $Y$. Then
%						\begin{equation}
%							\int_\Para s(\para) \, \P_{\Mech,\theta|y^\gamma}^\perp(\d \para) = \int_\Para s\Big( \OLSoptheta(y^\gamma) + z \Big) \, \P_{\Mech,\theta}^\perp(\d \para) \to \int_\Para s\big( \OLSoptheta(y) + z \big) \, \P_{\Mech,\theta}^\perp(\d \para) = \int_\Para s(\para) \, \P_{\theta|\Mech=y}^\perp(\d \para), \end{equation}
					
%				\end{proof}

%				\subsection{Proof of Theorem \ref{thm_convdisint}} \label{pro_convdisintproof}			
%			\begin{proof}

%			\end{proof}

	\subsection{Proof of Theorem \ref{thm_UII}} \label{thm_UIIproof}
			
%			\begin{proof}[Proof of Proposition \ref{pro_tfae}]
%				The proof that \eqref{pro_tfae1} implies \eqref{pro_tfae2} is provided by the Main Theorem. To prove that \eqref{pro_tfae2} implies \eqref{pro_tfae1}, we use the chain of reasoning provided by \eqref{eqn_convdisintproof}. If $\P_\theta \ne \P_{\Mech,\OLS,\theta}$, then there exists an integrable function $s : \Para \to \S$ so that
%						\begin{equation}
%							\int_\Para s(\para) \, \P_\theta(\d \para) \ne \int_\Para s(\para) \, \P_{\Mech,\OLS,\theta}^*(\d \para) = \int_Y \int_\Para s(\para) \, \P_{\theta|\Mech=y}(\d \para) \, \P_{\Mech,\theta}(\d y). \end{equation}
%				Therefore, the disintegration equation \eqref{eqn_disint} is not satisfied, so $\hat \P_{\Mech,\theta|}^\perp$ does not define a continuous disintegration for $\P_\theta$, as desired.

%				The equivalence of \eqref{mainthm_conv} and \eqref{mainthm_contdis} is provided by Proposition \ref{pro_fullRRmodel}.
				
				Suppose that $\P_\theta = \P_{\Mech,\OLS,\theta}^*$, and let $f$ be $\Mech$-uncorrelated. We claim that $f$ is $\Mech$-independent. Let $A \in \B(\S)$ and $B \in \B(Y)$ be Borel sets. It suffices to prove that $f^{-1} A$ and $\Mech^{-1} B$ are independent sets (with respect to $\P_\theta$). We calculate
					\begin{eqnarray}
						\P_\theta\!\big( f^{-1} A \cap \Mech^{-1} B \big) &=& \int_\Para 1_{f^{-1} A}\big(\para\big) 1_{\Mech^{-1} B}\big(\para\big) \, \P_\theta(\d \para) = \int_\Para 1_A\big(f \para\big) 1_B\big(\Mech \para\big) \, \P_\theta(\d \para) \nonumber \\
						&=& \int_\Para \int_\Para 1_A\big(f (\lift+\resid)\big) 1_B\big(\Mech (\lift+\resid)\big)  \, \P_{\Mech,\OLS,\theta}^\perp(\d \resid) \P_{\Mech,\OLS,\theta}(\d \lift), \label{thm_UIIproof1}
					\end{eqnarray} 
				since $\P_\theta = \P_{\Mech,\OLS,\theta}^* = \P_{\Mech,\OLS,\theta} * \P_{\Mech,\OLS,\theta}^\perp$ by assumption. We next simplify expression \eqref{thm_UIIproof1}. 
				
				Note that $f$ vanishes on the lifted Cameron-Martin space $\hat A_{\Mech,\theta}$, since $f$ is $\Mech$-uncorrelated. Since $\hat A_{\Mech,\theta} \cap \supp \P_{\Mech,\OLS,\theta}$ is dense in $\supp \P_{\Mech,\OLS,\theta}$ and $f$ is continuous, we have that $f(y) = 0$ almost surely. Similarly, note that $\Mech$ vanishes on the orthogonal space $\hat U_{\Mech,\theta}^\perp$, using the Main Theorem (Theorem \ref{thm_iso}). Since $\hat A_{\Mech,\theta}^\perp \cap \supp \P_{\Mech,\OLS,\theta}^\perp$ is dense in $\supp \P_{\Mech,\OLS,\theta}^\perp$ and $\Mech$ is continuous, we have that $\Mech(\resid) = 0$ almost surely.
				
				Combining these two facts, \eqref{thm_UIIproof1} equals
					\begin{equation} \label{thm_UIIproof2}
						\int_\Para \int_\Para 1_A\big(f \resid\big) 1_B\big(\Mech \upsilon\big)  \, \P_{\Mech,\OLS,\theta}^\perp(\d \resid) \P_{\Mech,\OLS,\theta}(\d \lift) = \int_\Para 1_A\big(f \resid\big) \P_{\Mech,\OLS,\theta}^\perp(\d \resid) \cdot \int_\Para  1_B\big(\Mech \lift \big)  \,  \P_{\Mech,\OLS,\theta}(\d \lift), \end{equation}
				using Fubini's theorem \cite{folland1999real}. This was the important step, and the rest is straightforward calculation. Observe that the first term of \eqref{thm_UIIproof2} equals
					\begin{eqnarray}
						\int_\Para \int_\Para 1_A\big(f \resid\big) \, \P_{\Mech,\OLS,\theta}^\perp(\d \resid) \hat \P_{\Mech,\OLS,\theta}(\d \lift ) &=& \int_\Para \int_\Para 1_A\big(f (\lift+\resid)\big) \, \P_{\Mech,\OLS,\theta}^\perp(\d \resid) \hat \P_{\Mech,\OLS,\theta}(\d \lift) \nonumber \\
						&=& \int_\Para 1_A\big(f \para\big) \P_\theta(\d \para) = \P_\theta\!\big(f^{-1} A\big); 
					\end{eqnarray}
				this is justified using the fact that $\P_{\Mech,\OLS,\theta}$ is a probability measure, Fubini's theorem (again), the fact that $f(\lift) = 0$ (almost surely), and the UII property (again). A similar argument shows that the second term of \eqref{thm_UIIproof2} equals $\P_\theta\!\big( \Mech^{-1} B \big)$. Consequently, $\P_\theta\!\big( f^{-1} A \cap \Mech^{-1} B \big) = \P_\theta\!\big( f^{-1} A \big) \P_\theta\!\big( \Mech^{-1} B \big)$, which proves that $f$ is $\Mech$-independent.
				
				Now, suppose that $f \in \Para^*$ is $\Mech$-uncorrelated but not $\Mech$-independent. Therefore, there exist Borel sets $A \in \B(\S)$ and $B \in \B(Y)$ so that $\P_\theta(f^{-1} A \cap \Mech^{-1} B) \ne \P_\theta(f^{-1} A) \P_\theta(\Mech^{-1} B)$. By following the previous argument in reverse (which uses only the fact that $f$ is $\Mech$-uncorrelated), we have that $\P_\theta(f^{-1} A) \P_\theta(\Mech^{-1} B) = \big( \P_{\Mech,\OLS,\theta} * \P_{\Mech,\OLS,\theta}^\perp \big)\big( f^{-1} A \cap \Mech^{-1} B \big)$. Therefore, $\P_\theta \ne \P_{\Mech,\OLS,\theta}^*$, hence $\P_\theta$ is not UII (with respect to $\Mech$). \hfill \qed
%			\end{proof}

		\subsection{Proof of Theorem \ref{thm_covrep}} \label{thm_covrepproof}

%			\begin{proof}

				For each pair $(e,i) \in V^* \times I$, define the kernel array $k_{ei} := k\big( e\dash\delta_i \big) = (k \circ \delta)(e,i) \in U \subseteq V^I$. Let $V^* \otimes_\cov V^*$ denote the tensor product defined by formal combinations of $e \otimes e'$, and equipped with the minimal topology so that the map $e \otimes e' \mapsto k_{ei} \otimes_\Hilb k_{e'i'}$ is continuous for all $(i,i')$. Let $V \otimes_\cov V$ denote the tensor product defined by formal combinations of $v \otimes v'$, and equipped with the minimal topology so that the map $v \otimes v' \mapsto (e \otimes e')[v \otimes v']$ is continuous for all $(e,e') \in V^* \otimes_\cov V^*$.
				
				Since $V^* \otimes_\cov V^*$ is dense in the dual space $(V \otimes_\cov V)^*$,  a tensor is characterized by the action of all tensor-functionals on it. Therefore, for each $(i,i')$, there exists a tensor $c(i,i') \in V \otimes_\cov V$ so that $(e \otimes e')[c(i,i')] = \cov[k_{ei} | k_{e'i'}] = c(e,i; e',i')$. Continuity of $(i,i') \mapsto V \otimes_\cov V$ is assured by joint continuity of $(i,e; i',e') \mapsto k_{ei} \otimes_\Hilb k_{e'i'}$. Symmetry and non-negative-definiteness are assured since $\cov$ is an inner product. \hfill \qed %Commutativity of diagram \eqref{dia_covrep} is assured by Lemma \ref{lem_threespace}. 
%			\end{proof}
	
	\subsection{Proof of Fundamental Separation Lemma (Lemma \ref{lem_funsep})} \label{lem_funsepproof}
			
%			\begin{proof}
				Let $(u_0, u_1)$ and $(u_0', u_1')$ be two solution pairs, and let $\w := u_1 - u_0$ and $\w' := u_1' - u_0'$ denote the corresponding separation vectors. We claim that $\w = \w'$, and that $\Sigma$ is convex.
				
				Define the interpolations $u_0(t) := u_0 + t\big( u_0' - u_0 \big)$ and $u_1(t) := u_1 + t\big( u_1' - u_1 \big)$ for $t \in [0,1]$.\footnote{Note that $u_\ell(0) = u_\ell$ and $u_\ell(1) = u_\ell'$ for $\ell = 0,1$.} Clearly, $\big( u_0(t), u_1(t) \big)$ is a solution if and only if the squared-distance function $\eta(t) := \big\| u_1(t) - u_0(t) \big\|^2$ equals $\rho^2$.\footnote{By assumption, $\eta(0) = \|\w\|^2 = \rho^2 = \|\w'\|^2 = \eta(1)$.} 
				
				We calculate 
					\begin{equation} \label{eqn_uid}
						u_1(t) - u_0(t) = u_1 + t\big( u_1' - u_1 \big) - u_0 - t\big( u_0' - u_0 \big) = u_1 - u_0 + t \big( (u_1' - u_0') - (u_1 - u_0) \big) = \w + t \big( \w' - \w \big). 	\end{equation}
				Applying this identity, we calculate that
					\begin{eqnarray}
						\eta(t) = \big\| u_1(t) - u_0(t) \big\|^2 &=& \big\langle \w + t \big( \w' - \w \big), \w + t \big( \w' - \w \big) \big\rangle \nonumber \\ &=& \|\w\|^2 + 2t \Re\!\big\langle \w' - \w, \w \big\rangle + t^2 \big\| \w' - \w \big\|^2. \end{eqnarray}
				Note that $\eta(0) = \rho^2 = \eta(1)$, and $\big\| \w' - \w \big\|^2 \ge 0$. Since $\eta(t)$ is a quadratic polynomial in $t$, if the leading coefficient $\big\| \w' - \w \big\|^2$ is positive, then for all $t \in (0,1)$, $\big\| u_1(t) - u_0(t) \big\|^2 < \delta^2$. This is a contradiction to the fact that the separation distance is exactly equal to $\delta$. Therefore, $\big\| \w' - \w \big\|^2 = 0$, hence $\w' = \w$, proving \ref{lem_funsep2}. It follows that $\eta(t) = \|\w\|^2 = \rho^2$ for all $t$, so $\big( u_0(t), u_1(t) \big)$ is a solution. This demonstrates convexity, proving part \ref{lem_funsep1}.
				
				Since $\rho > 0$, the compact convex sets $\hat D_0$ and $\hat D_1$ are disjoint, hence separable. Let $(u_0, u_1)$ be a solution pair, and let $M_\ell$ be the affine hyperplane which meets $u_\ell$ and is orthogonal to $\w$. It is clear that $M_0$ and $M_1$ separate $\hat D_0$ and $\hat D_1$ (as in footnote \ref{foot_sep}), otherwise there would exist a solution separated by distance $< \rho$, a contradiction. Each hyperplane $M_\ell$ must contain the solution facet $\Sigma_\ell$, otherwise there would be a solution separated by distance $> \rho$. This proves part \ref{lem_funsep3}.
				
				If the support-vector set $x_\ell := M_\ell \cap (k \circ \delta)D_\ell$ were empty, we would not be able to write a solution $u_\ell \in \Sigma_\ell$ in terms of any data vectors from $(k \circ \delta)D_\ell$. This contracts convexity of $\hat D_\ell$, hence $x_\ell \ne \varnothing$. Since $M_\ell$ is closed and $(k \circ \delta)D_\ell$ is compact, the support-vector $x_\ell$ is compact. Since the support vectors are the extreme points of the convex set $M_\ell \cap \hat D_\ell$, each solution $u_\ell$ can be written as a convex combination of support vectors from $x_\ell$. Consequently, for each $u_\ell$, there exists a (unique) probability measure $\p_\ell$ supported on $x_\ell$ so that $u_\ell = \int_{x_\ell} x_\ell \, \p_\ell(x_\ell)$.\footnote{This is a Pettis integral, so $\langle v, u_\ell = \int_{x_\ell} \langle v, x_\ell \rangle \, \p_\ell(x_\ell)$ for all $v \in U$.} This proves part \ref{lem_funsep4}.
				
				The representations \eqref{eqn_wdec1} and \eqref{eqn_wdec2} follow immediately. If $(u_0 + h, u_1 + h)$ is another solution pair, then the $\nu_\ell$-measures are shifted by $h$. Since $w = (u_1 + h) - (u_0 + h) = u_1 - u_0$, this adjustment is canceled, so the representations are unique. This proves part \ref{lem_funsep5}. \hfill \qed

\bibliographystyle{plain}
\bibliography{biblio_TLM}

\begin{thebibliography}{10}

\bibitem{adler2009random}
Robert~J Adler and Jonathan~E Taylor.
\newblock {\em Random fields and geometry}, volume 115.
\newblock Springer, 2009.

\bibitem{aliprantis2006infinite}
Charalambos~D Aliprantis and Kim~C Border.
\newblock {\em Infinite dimensional analysis: a hitchhiker's guide}.
\newblock Springer, 2006.

\bibitem{amari2007methods}
Shun-ichi Amari and Hiroshi Nagaoka.
\newblock {\em Methods of information geometry}, volume 191.
\newblock American Mathematical Soc., 2007.

\bibitem{anderson1954introduction}
Theodore~Wilbur Anderson.
\newblock An introduction to multivariate statistical analysis.
\newblock 1954.

\bibitem{bell1987malliavin}
Denis~R Bell.
\newblock {\em The Malliavin Calculus}, volume~34.
\newblock Longman Scientific \& Technical, 1987.

\bibitem{ben2003generalized}
Adi Ben-Israel and Thomas~NE Greville.
\newblock {\em Generalized inverses}.
\newblock Springer, 2003.

\bibitem{berlinet2004reproducing}
Alain Berlinet and Christine Thomas-Agnan.
\newblock {\em Reproducing kernel Hilbert spaces in probability and
  statistics}, volume~3.
\newblock Springer, 2004.

\bibitem{billingsley2008probability}
Patrick Billingsley.
\newblock {\em Probability and measure}.
\newblock John Wiley \& Sons, 2008.

\bibitem{bogachev2007measure}
Vladimir~Igorevich Bogachev and Maria Aparecida~Soares Ruas.
\newblock {\em Measure theory}, volume~1.
\newblock Springer, 2007.

\bibitem{bourbaki2004integration}
Nicolas Bourbaki.
\newblock {\em Integration 1: Chapters 1-6.}, volume~1.
\newblock Springer, 2004.

\bibitem{budivsic2012applied}
Marko Budi{\v{s}}i{\'c}, Ryan Mohr, and Igor Mezi{\'c}.
\newblock Applied koopmanism.
\newblock {\em Chaos: An Interdisciplinary Journal of Nonlinear Science},
  22(4):047510--047510, 2012.

\bibitem{cameron1944transformations}
Robert~H Cameron and William~T Martin.
\newblock Transformations of weiner integrals under translations.
\newblock {\em The annals of Mathematics}, 45(2):386--396, 1944.

\bibitem{censor2010borel}
Aviv Censor and Daniele Grandini.
\newblock Borel and continuous systems of measures.
\newblock {\em arXiv preprint arXiv:1004.3750}, 2010.

\bibitem{chang1997conditioning}
Joseph~T Chang and David Pollard.
\newblock Conditioning as disintegration.
\newblock {\em Statistica Neerlandica}, 51(3):287--317, 1997.

\bibitem{cortes1995support}
Corinna Cortes and Vladimir Vapnik.
\newblock Support-vector networks.
\newblock {\em Machine learning}, 20(3):273--297, 1995.

\bibitem{cressie2011statistics}
Noel Cressie and Christopher~K Wikle.
\newblock {\em Statistics for spatio-temporal data}.
\newblock John Wiley \& Sons, 2011.

\bibitem{domingos2000unified}
Pedro Domingos.
\newblock A unified bias-variance decomposition.
\newblock In {\em Proceedings of 17th International Conference on Machine
  Learning. Stanford CA Morgan Kaufmann}, pages 231--238, 2000.

\bibitem{dudley2010sample}
Richard~M Dudley.
\newblock Sample functions of the gaussian process.
\newblock In {\em Selected Works of RM Dudley}, pages 187--224. Springer, 2010.

\bibitem{dudley1971seminorms}
RM~Dudley, Jacob Feldman, and Lucien Le~Cam.
\newblock {On seminorms and probabilities, and abstract Wiener spaces}.
\newblock {\em The Annals of Mathematics}, 93(2):390--408, 1971.

\bibitem{dunford1957linear}
Nelson Dunford and Jacob~T Schwartz.
\newblock Linear operators, part i: General theory.
\newblock 1957.

\bibitem{durrett2010probability}
Rick Durrett.
\newblock {\em Probability: theory and examples}.
\newblock Cambridge university press, 3 edition, 2010.

\bibitem{fattorini1991necessary}
HO~{Fa}ttorini and H~{Fr}ankowska.
\newblock Necessary conditions for infinite-dimensional control problems.
\newblock {\em Mathematics of Control, Signals and Systems}, 4(1):41--67, 1991.

\bibitem{folland1999real}
G.B. Folland.
\newblock {\em {Real Analysis: Modern Techniques and Their Applications}}.
\newblock Wiley-Interscience, 1999.

\bibitem{girsanov1960transforming}
IV~Girsanov.
\newblock On transforming a certain class of stochastic processes by absolutely
  continuous substitution of measures.
\newblock {\em Theory of Probability \& Its Applications}, 5(3):285--301, 1960.

\bibitem{gneiting2002compactly}
Tilmann Gneiting.
\newblock Compactly supported correlation functions.
\newblock {\em Journal of Multivariate Analysis}, 83(2):493--508, 2002.

\bibitem{gross1965abstract}
Leonard Gross.
\newblock {Abstract Wiener Spaces}.
\newblock In {\em Proc. Fifth Berkeley Symposium on Mathematics Statistics and
  Probability}, pages 31--42, 1965.

\bibitem{hastie2009elements}
Trevor Hastie, Robert Tibshirani, and Jerome Friedman.
\newblock {\em The elements of statistical learning}.
\newblock Springer, 2009.

\bibitem{hein2005maximal}
Matthias Hein, Olivier Bousquet, and Bernhard Sch{\"o}lkopf.
\newblock Maximal margin classification for metric spaces.
\newblock {\em Journal of Computer and System Sciences}, 71(3):333--359, 2005.

\bibitem{52749}
Neil~Strickland (http://mathoverflow.net/users/10366/neil strickland).
\newblock On locally convex (and compactly generated) topological vector
  spaces.
\newblock MathOverflow.
\newblock URL:http://mathoverflow.net/q/52749 (version: 2011-01-21).

\bibitem{164681}
Francois~Ziegler (http://mathoverflow.net/users/19276/francois ziegler).
\newblock Example: a locally convex tvs which is not compactly generated.
\newblock MathOverflow.
\newblock URL:http://mathoverflow.net/q/164681 (version: 2014-04-28).

\bibitem{james1961estimation}
William James and Charles Stein.
\newblock Estimation with quadratic loss.
\newblock In {\em Proceedings of the fourth Berkeley symposium on mathematical
  statistics and probability}, volume~1, pages 361--379, 1961.

\bibitem{karatzas1991brownian}
Ionnis Karatzas and Steven Shreve.
\newblock {\em Brownian motion and stochastic calculus}, volume 113.
\newblock Springer, 1991.

\bibitem{khaleelulla1982counterexamples}
Srinivasapur~M Khaleelulla.
\newblock {\em Counterexamples in topological vector spaces}.
\newblock Springer-Verlag Heidelberg, 1982.

\bibitem{klenke2008probability}
Achim Klenke.
\newblock Probability theory: A comprehensive course, 2008.

\bibitem{krastanov2011pontryagin}
Mikhail~Ivanov Krastanov, NK~Ribarska, and Ts~Y Tsachev.
\newblock A pontryagin maximum principle for infinite-dimensional problems.
\newblock {\em SIAM Journal on Control and Optimization}, 49(5):2155--2182,
  2011.

\bibitem{krige1951statistical}
DG~Krige.
\newblock {\em A Statistical Approach to Some Mine Valuation and Allied
  Problems on the Witwatersrand: By DG Krige}.
\newblock PhD thesis, University of the Witwatersrand, 1951.

\bibitem{lagatta2013continuous}
Tom LaGatta.
\newblock {Continuous Disintegrations of Gaussian Processes}.
\newblock {\em Theory of Probability \& Its Applications}, 57(1):151--162,
  2013.

\bibitem{lagatta2014geodesics}
Tom LaGatta and Jan Wehr.
\newblock {Geodesics of Random Riemannian Metrics}.
\newblock {\em Communications in Mathematical Physics}, 327(1):181--241, 2014.

\bibitem{leao2004regular}
D~Le{\~a}o~Jr, M~Fragoso, and P~Ruffino.
\newblock Regular conditional probability, disintegration of probability and
  radon spaces.
\newblock {\em Proyecciones (Antofagasta)}, 23(1):15--29, 2004.

\bibitem{lebowitz1972bounds}
Joel~L Lebowitz.
\newblock Bounds on the correlations and analyticity properties of
  ferromagnetic ising spin systems.
\newblock {\em Communications in Mathematical Physics}, 28(4):313--321, 1972.

\bibitem{ledoux1991probability}
Michel Ledoux and Michel Talagrand.
\newblock {\em Probability in Banach Spaces: isoperimetry and processes},
  volume~23.
\newblock Springer, 1991.

\bibitem{lehmann1998theory}
Erich~Leo Lehmann and George Casella.
\newblock {\em Theory of point estimation}, volume~31.
\newblock Springer, 1998.

\bibitem{lenglart1977transformation}
E~Lenglart.
\newblock Transformation des martingales locales par changement absolument
  continu de probabilities.
\newblock {\em Zeitschrift f{\"u}r Wahrscheinlichkeitstheorie und verwandte
  Gebiete}, 39(1):65--70, 1977.

\bibitem{mac1998categories}
Saunders Mac~Lane.
\newblock {\em Categories for the working mathematician}, volume~5.
\newblock Springer verlag, 1998.

\bibitem{marshall1983domains}
Albert~W Marshall, Ingram Olkin, et~al.
\newblock Domains of attraction of multivariate extreme value distributions.
\newblock {\em The Annals of Probability}, 11(1):168--177, 1983.

\bibitem{matheron1963principles}
Georges Matheron.
\newblock Principles of geostatistics.
\newblock {\em Economic geology}, 58(8):1246--1266, 1963.

\bibitem{mccullagh2002statistical}
Peter McCullagh.
\newblock What is a statistical model?
\newblock {\em Annals of statistics}, pages 1225--1267, 2002.

\bibitem{menafoglio2013universal}
Alessandra Menafoglio, Piercesare Secchi, Matilde Dalla~Rosa, et~al.
\newblock A universal kriging predictor for spatially dependent functional data
  of a hilbert space.
\newblock {\em Electronic Journal of Statistics}, 7:2209--2240, 2013.

\bibitem{muandet2012learning}
Krikamol Muandet, Kenji Fukumizu, Francesco Dinuzzo, and Bernhard
  Sch{\"o}lkopf.
\newblock Learning from distributions via support measure machines.
\newblock In {\em NIPS}, pages 10--18, 2012.

\bibitem{munson2009feature}
M~Arthur Munson and Rich Caruana.
\newblock On feature selection, bias-variance, and bagging.
\newblock In {\em Machine Learning and Knowledge Discovery in Databases}, pages
  144--159. Springer, 2009.

\bibitem{newman1984asymptotic}
Charles~M Newman.
\newblock Asymptotic independence and limit theorems for positively and
  negatively dependent random variables.
\newblock {\em Lecture Notes-Monograph Series}, pages 127--140, 1984.

\bibitem{poggio}
Tomaso Poggio.
\newblock The learning problem and regularization, Accessed 2014.
\newblock [Online; accessed 22-March-2014].

\bibitem{protter2004stochastic}
Philip~E Protter.
\newblock {\em Stochastic Integration and Differential Equations: Version 2.1},
  volume~21.
\newblock Springer, 2004.

\bibitem{reed1972methods}
Michael Reed and Barry Simon.
\newblock {\em Methods of Modern Mathematical Physics: Vol.: 1.: Functional
  Analysis}.
\newblock Academic press, 1972.

\bibitem{roy2011computability}
Daniel~M Roy.
\newblock {\em Computability, inference and modeling in probabilistic
  programming}.
\newblock PhD thesis, Massachusetts Institute of Technology, 2011.

\bibitem{ryan2002introduction}
Raymond~A Ryan.
\newblock {\em Introduction to tensor products of Banach spaces}.
\newblock Springer, 2002.

\bibitem{talagrand1996majorizing}
Michel Talagrand.
\newblock Majorizing measures: the generic chaining.
\newblock {\em The Annals of Probability}, pages 1049--1103, 1996.

\bibitem{tarieladze2007disintegration}
Vaja Tarieladze and Nicholas Vakhania.
\newblock Disintegration of gaussian measures and average-case optimal
  algorithms.
\newblock {\em Journal of Complexity}, 23(4):851--866, 2007.

\bibitem{tjur1975constructive}
Tue Tjur.
\newblock {\em A Constructive Definition of Conditional Distributions}.
\newblock Institute of Mathematical Statistics, University of Copenhagen, 1975.

\bibitem{vakhania1987probability}
N~Vakhania, Vazha Tarieladze, and S~Chobanyan.
\newblock {\em Probability distributions on Banach spaces}, volume~14.
\newblock Springer, 1987.

\bibitem{vakhania1978covariance}
Nicholas~N Vakhania and Vaja~I Tarieladze.
\newblock Covariance operators of probability measures in locally convex
  spaces.
\newblock {\em Theory of Probability \& Its Applications}, 23(1):1--21, 1978.

\bibitem{vakhania1981probability}
Nikolai~Nikolaevich Vakhania.
\newblock {\em Probability distributions on linear spaces}.
\newblock North Holland New York, 1981.

\bibitem{vakhania1975topological}
NN~Vakhania.
\newblock The topological support of gaussian measure in banach space.
\newblock {\em Nagoya Mathematical Journal}, 57:59--63, 1975.

\bibitem{whitt2002stochastic}
Ward Whitt.
\newblock {\em Stochastic-process limits: an introduction to stochastic-process
  limits and their application to queues}.
\newblock Springer, 2002.

\bibitem{wichura2006coordinate}
Michael~J Wichura.
\newblock {\em The coordinate-free approach to linear models}, volume~19.
\newblock Cambridge University Press, 2006.

\bibitem{williams1991probability}
David Williams.
\newblock {\em Probability with martingales}.
\newblock Cambridge university press, 1991.

\end{thebibliography}

\end{document}